\newtheorem{prop}{Proposition}[subsection]
\newtheorem{theo}[prop]{Théor\`eme}
\newtheorem*{theo**}{Théorème}
\newtheorem{coro}[prop]{Corollaire}
\newtheorem*{conj*}{Conjecture}
\newtheorem{lemm}[prop]{Lemme}
\newtheorem{lemm*}{Lemme}[prop]
\theoremstyle{definition}
\newtheorem{conj}[prop]{Conjecture}
\newtheorem{vide}[prop]{}
\newtheorem{defi}[prop]{Définition}
\newtheorem*{defi*}{Définition}
\theoremstyle{remark}
\newtheorem{rema}[prop]{Remarques}
\newtheorem{nota}[prop]{Notations}
\numberwithin{equation}{prop}
\newcommand{\riso}{ \overset{\sim}{\longrightarrow}\, }
\renewcommand{\sp}{\mathrm{sp}}
\newcommand{\FF}{{\mathcal{F}}}
\newcommand{\E}{{\mathcal{E}}}
\newcommand{\G}{{\mathcal{G}}}
\renewcommand{\H}{{\mathcal{H}}}
\newcommand{\D}{{\mathcal{D}}}
\newcommand{\PP}{{\mathcal{P}}}
\renewcommand{\O}{{\mathcal{O}}}
\newcommand{\V}{\mathcal{V}}
\renewcommand{\S}{\mathcal{S}}
\newcommand{\Y}{\mathcal{Y}}
\newcommand{\X}{\mathfrak{X}}
\newcommand{\U}{\mathfrak{U}}
\renewcommand{\P}{\mathbb{P}}
\renewcommand{\L}{\mathbb{L}}
\newcommand{\R}{\mathbb{R}}
\newcommand{\Q}{\mathbb{Q}}
\newcommand{\Z}{\mathbb{Z}}
\newcommand{\N}{\mathbb{N}}
\newcommand{\hdag}{  \phantom{}{^{\dag} }    }
\begin{document}
\selectlanguage{frenchb}

\title{Sur la préservation de la surconvergence par l'image directe d'un morphisme propre et lisse}
\author{Daniel Caro \footnote{L'auteur a bénéficié du soutien du réseau européen TMR \textit{Arithmetic Algebraic Geometry}
(contrat numéro UE MRTN-CT-2003-504917).}}

\date{}

\maketitle

\begin{abstract}
\selectlanguage{english}
Up to a translation in the language of arithmetic $\D$-modules, 
we prove a conjecture of Berthelot on the preservation of the overconvergence under
the direct image by a smooth proper morphism of varieties over a perfect field of characteristic $p>0$.
\end{abstract}

\selectlanguage{frenchb}
\date
\tableofcontents

\section*{Introduction}
Soit $\V$ un anneau de valuation discrète complet, 
de corps résiduel parfait $k$ de caractéristique $p>0$, de corps des
fractions $K$ de caractéristique $0$.
Soit $b\colon Y' \rightarrow Y$ un morphisme propre et lisse de $k$-variétés, i.e.,
de $k$-schémas séparés et de type fini.
Berthelot conjectura en 1986 dans \cite[4.3]{Berig_ini} que l'image directe par $b$
du $F$-isocristal surconvergent sur $Y' $ {\it constant},
i.e la cohomologie relative rigide $\R b _{\mathrm{rig}*} (Y'/K)$,
a pour faisceaux de cohomologie des $F$-isocristaux surconvergents sur $Y$.
Cette conjecture avait été validée par Berthelot dans le cas relevable lorsque $Y$ est lisse
(voir \cite[4, Théorème 5]{Berig_ini}).
Plus précisément, il a vérifié sa conjecture
lorsqu'il existe un morphisme $a \colon \X ' \rightarrow \X$ de $\V$-schémas formels propres et
un ouvert $\Y$ de $\X$ tels que $a  ^{-1} (\Y) \rightarrow \Y$ soit un morphisme
de $\V$-schémas formels lisses relevant $b $.
La preuve de ce cas repose sur le théorème de finitude de Kiehl
pour les morphismes propres en géométrie analytique rigide.

Ensuite, dans \cite[4]{Tsuzuki-BaseChangeCoh},
Tsuzuki a étendu cette conjecture
du cas constant au cas général de la manière suivante
(en fait il la formule plus généralement en remplaçant la base $\V$ par un triplet comprenant un $\V$-schéma formel) :
{\og soient $a \colon X' \rightarrow X$ un morphisme propre de $k$-variétés,
$Y$ un ouvert de $X$ tel que le morphisme induit $b \colon Y' := a  ^{-1} (Y) \rightarrow Y$
soit de plus lisse. 
Alors, pour tout ($F$-)isocristal $E'$ sur $Y'$ surconvergent le long de $X' \setminus Y'$,
les faisceaux de cohomologie de
$\R a _{\mathrm{rig}*} (E'/K)$
sont des
($F$-)isocristaux sur $Y$ surconvergents le long de $X \setminus Y$.\fg}
On la nommera encore conjecture de Berthelot (avec coefficients si on veut préciser).
Rappelons que lorsque $X$ est propre, $\R a _{\mathrm{rig}*} (E'/K)$ ne dépend canoniquement que
de $b $ et se note par conséquent $\R b _{\mathrm{rig}*} (E'/K)$.
Via un théorème de changement de base de la cohomologie rigide relative,
Tsuzuki a validé cette conjecture dans le contexte suivant :
soient $f\colon \PP' \rightarrow \PP$ un morphisme propre de $\V$-schémas formels séparés de type fini,
$X$ un sous-schéma fermé de la fibre spéciale de $\PP$, $Y$ un ouvert de $X$,
$X' := f^{-1} (X)$, $Y' := f^{-1} (Y)$ tels que
$f$ soit lisse au voisinage de $Y'$ et $\PP$ soit lisse au voisinage de $Y$.
Dans ce cas, pour tout ($F$-)isocristal $E'$ sur $Y'$ surconvergent le long de $X' \setminus Y'$,
les faisceaux de cohomologie de
$\R f _{\, \mathrm{rig}*} (E'/K)$
sont des
($F$-)isocristaux sur $Y$ surconvergent le long de $X \setminus Y$
(voir les théorèmes \cite[4.1.1 et 4.1.4]{Tsuzuki-BaseChangeCoh} de Tsuzuki).

Plus récemment, dans le cas où $\V$ est modérément ramifié,
\'Etesse a validé cette conjecture de Berthelot avec coefficients dans le cas absolu (i.e. $X$ est propre)
si l'une des deux conditions est validée : 
$b$ est relevable sur $\V$ ou $Y'$ est une intersection complète
relative dans des espaces projectifs sur $Y$ (voir \cite{etesse-2008}).

On dispose enfin de deux autres versions (l'une est plus forte que l'autre) de Shiho
de cette conjecture de Berthelot avec coefficients (voir les deux conjectures \cite[5.3 et 5.5]{shiho-logRC-RCI}).
Dans \cite{shiho-logRC-RCII}, Shiho résout sa conjecture la plus faible.
Enfin, lorsque $b$ n'est plus forcément propre et lisse, Shiho a vérifié la surcohérence générique (i.e., devient
un isocristal surconvergent sur un ouvert dense) de la cohomologie rigide relative avec coefficients. 
\bigskip

Nous nous proposons d'apporter un éclairage nouveau sur ces questions via la théorie de Berthelot des $\D$-modules
arithmétiques. Rappelons d'abord comment la théorie des $\D$-modules
arithmétiques est reliée à la théorie des isocristaux surconvergents.
Soient $X$ une $k$-variété et $Y$ un ouvert de $X$ tels que $(Y,X)$ soit $d$-réalisable,
i.e., tels qu'il existe
un $\V$-schéma formel $\PP $ séparé et lisse,
un diviseur $T$ de la fibre spéciale de $\PP$ et une immersion fermée $X \hookrightarrow \PP$
vérifiant $Y = X \setminus T$.
D'après \cite{caro-pleine-fidelite}, nous bénéficions d'une équivalence
entre la catégorie $(F\text{-})\mathrm{Isoc}^{\dag} (Y,X/K)$ des $(F\text{-})$isocristaux surconvergents sur $(Y,X)/K$
et la catégorie $(F\text{-})\mathrm{Isoc}^{\dag \dag} (Y,X/K)$ des $(F\text{-})$isocristaux surcohérents sur $(Y,X)/K$.
Les objets de la catégorie $(F\text{-})\mathrm{Isoc}^{\dag \dag} (Y,X/K)$ sont des
$(F\text{-})\D$-modules arithmétiques sur $(Y,X)/K$ vérifiant certaines hypothèses de finitude.
Pour obtenir cette équivalence, nous utilisons le morphisme de spécialisation $\sp \colon \PP _K \to \PP$,
où $\PP _K$ est la fibre générique au sens de Raynaud (l'espace analytique rigide canoniquement associé)
de $\PP$. Ce morphisme de spécialisation $\sp$ permet de relier le monde de la géométrie analytique rigide 
dans lequel vivent les isocristaux surconvergents et celui de la géométrie formelle dans lequel vivent les $\D$-modules arithmétiques.
Via cette équivalence, nous obtenons une traduction de la conjecture de Berthelot en terme de $\D$-modules arithmétiques que nous détaillons ci-dessous.

\bigskip
Dans cet article, nous prouvons une traduction dans le langage des $\D$-modules arithmétiques de la conjecture de Berthelot. 
Plus précisément, 
soient $X$ une $k$-variété et $Y$ un ouvert de $X$ tels que $(Y,X)$ soit proprement $d$-réalisable
i.e., tels qu'il existe
un $\V$-schéma formel $\PP $ propre et lisse,
un diviseur $T$ de la fibre spéciale de $\PP$ et une immersion (non nécessairement fermée) $X \hookrightarrow \PP$
vérifiant $Y = X \setminus T$. On désigne par $\D ^{\dag} _{\PP}(\hdag T) _{\Q}$ le faisceau sur $\PP$ des opérateurs différentiels d'ordre infini, de niveau fini avec des singularités surconvergentes le long de $T$ (voir \cite{Be1}).
On vérifie que la sous-catégorie pleine de 
$(F\text{-})D ^\mathrm{b} _\mathrm{surcoh} (\D ^{\dag} _{\PP}(\hdag T) _{\Q} ) $ des complexes à support dans $X$ 
ne dépend pas du choix du plongement $X \hookrightarrow \PP$ et du diviseur $T$ de $P$.
On la note alors
$(F\text{-})D ^\mathrm{b} _\mathrm{surcoh} (\D ^\dag _{(Y,X)/K}) $.
Ses objets sont 
les $(F\text{-})$complexes surcohérents de $\D$-modules arithmétiques sur $(Y,X)/K$.
On définit de même (on traite dans un premier temps le cas où $Y$ est lisse afin d'utiliser l'équivalence entre isocristaux surconvergents et 
isocristaux surcohérents exprimée dans le paragraphe ci-dessus)
la sous-catégorie
$(F\text{-})D ^\mathrm{b} _\mathrm{isoc} (\D ^\dag _{(Y,X)/K})$
de $(F\text{-})D ^\mathrm{b} _\mathrm{surcoh} (\D ^\dag _{(Y,X)/K}) $
des $(F\text{-})$complexes surcohérents de $\D$-modules arithmétiques sur $(Y,X)/K$ dont les espaces de cohomologie sont
des $(F\text{-})$isocristaux surcohérents sur $(Y,X)/K$.
Pour vérifier cette indépendance canonique, nous prouvons la propriété suivante de stabilité de la surcohérence : 
pour tout morphisme $f\colon \PP' \to \PP$ de $\V$-schémas formels séparés et lisses tel que $T':=f ^{-1} (T)$ soit un diviseur de la fibre spéciale de $\PP'$, pour tout $(F\text{-})$complexe à cohomologie $\D ^{\dag} _{\PP'}(\hdag T') _{\Q}$-surcohérente $\E'$ à support propre sur $\PP$, 
l'image directe de $\E'$ par $f$ est à cohomologie $\D ^{\dag} _{\PP}(\hdag T) _{\Q}$-surcohérente (voir \ref{surhol-conjA}).  
Cela correspond à une version surcohérente de 
la conjecture de Berthelot sur la stabilité de l'holonomie par image directe énoncée dans \cite[5.3.6]{Beintro2}.

Nous établissons dans ce papier la version suivante de la conjecture de Berthelot (voir \ref{b+proprelisse-Yqcq}): 
soient $(Y',X')$ et $(Y,X)$ deux couples de $k$-variétés proprement $d$-réalisables,
$a \colon X' \to X$ un morphisme propre tels que $ a  ^{-1} (Y) =Y'$ et tels que le morphisme induit $Y' \to Y$ soit propre et lisse.
Alors, le foncteur image directe par $a $ se factorise sous la forme :
\begin{equation}
\notag
a _{+} \colon  (F\text{-})D ^\mathrm{b} _\mathrm{isoc} (\D ^\dag _{(Y',X')/K})
\rightarrow
(F\text{-})D ^\mathrm{b} _\mathrm{isoc} (\D  ^\dag _{(Y,X)/K}).
\end{equation}
En fait, si on ne se préoccupe pas de l'indépendance par rapport 
aux choix (e.g. du plongement $X \hookrightarrow \PP$ et du diviseur $T$ de $P$),
nous prouvons une version légèrement étendue via \ref{b+proprelisse-varlisse-gen}, i.e. on remplace la notion de proprement $d$-plongeabilité 
par la notion de $d$-plongeabilité.

{\bf Remerciements.} Je remercie vivement Kiran Kedlaya pour une question posée lors de la conférence en l'honneur de Gilles Christol qui m'a incitée à m'intéresser à cette conjecture
de Berthelot via les $\D$-modules arithmétiques. Je remercie Jean-Yves Etesse pour une question analogue lors de cette même conférence. 
Je remercie Nobuo Tsuzuki pour les discussions notamment sur la comparaison entre les versions formelles et rigides des conjectures de Berthelot lors d'une invitation à Sendai. 
\\

{\bf Notations.}
Tout au long de cet article, nous garderons les notations
suivantes : soit $\V$ un anneau de valuation discrète complet,
de corps résiduel parfait $k$ des caractéristique $p>0$, de corps de
fractions $K$ de caractéristique $0$, d'uniformisante $\pi$.
Les $\V$-schémas formels seront notés par des lettres calligraphiques ou
gothiques et leur fibre spéciale par les lettres romanes
correspondantes.
On fixe $s\geq 1$ un entier naturel et $F$ désigne la puissance
$s$-ième de l'endomorphisme de Frobenius. Les modules sont par défaut des modules à gauche.
Si $\E$ est un faisceau abélien, $\E _\Q$ désignera $\E \otimes _\Z \Q$. 
On note $m$ un entier positif.
Si $f\colon \X ' \rightarrow \X$ est un morphisme de $\V$-schémas formels lisses,
on notera par des lettres droites les fibres spéciales et $f \colon X' \rightarrow X$
sera si aucune confusion n'est à craindre le morphisme induit. 
Sauf mention du contraire, on supposera (sans nuire à la généralité) les $k$-schémas réduits.
En général, lorsqu'un diviseur est vide, on évite de l'indiquer, e.g. dans les opérations cohomologiques correspondantes.

{\bf Convention.}
Lorsque nous dirons {\og surholonome\fg} nous voudrons dire {\og surholonome après tout changement de la base $\V$\fg} (voir \cite{surcoh-hol}).

\section{Rappels et compléments sur les $\D$-modules arithmétiques}

Nous donnons dans cette section quelques rappels sur les $\D$-modules arithmétiques.
Nous définissons aussi quelques catégories de $\D$-modules que nous utiliserons par la suite.

\subsection{Opérations cohomologiques}

Donnons d'abord quelques précisions sur les notations que nous adopterons concernant les foncteurs image directe et image inverse extraordinaire.

\begin{vide}
[Image inverse extraordinaire]
\label{nota-tau}
Soient $f, f '\colon \PP' \rightarrow \PP$ deux morphismes de $\V$-schémas formels lisses
induisant le même morphisme $P' \to P$ au niveau des fibres spéciales.
On dispose des foncteurs images inverses extraordinaires
$f ^{!}$, $f ^{\prime !}\colon  D ^\mathrm{b} _\mathrm{coh} (\smash{\D} ^{\dag} _{\PP,\Q})
\to
D ^\mathrm{b}  (\smash{\D} ^{\dag} _{\PP',\Q})$.
Via \cite[2.1.5]{Be2}, on bénéficie de l'isomorphisme canonique de foncteurs de la forme $f^! \riso f ^{\prime !}$. Ces isomorphismes vérifient les formules de transitivité usuelles. 
De même, pour $m$ assez grand, i.e. $p ^m > e/(p-1)$ avec $e$ l'indice de ramification de $\V$,
en désignant par $f^{(m)!}$ et $ f ^{\prime (m) !}$ les foncteurs 
$D ^\mathrm{b} _\mathrm{coh} (\smash{\widehat{\D}} ^{(m)} _{\PP})
\to
D ^\mathrm{b}  (\smash{\widehat{\D}} ^{(m)} _{\PP'})$
images inverses extraordinaires de niveau $m$ par $f$ et $f'$,
on dispose de l'isomorphisme de la forme
$f^{(m)!} \riso f ^{\prime (m) !}$.
De tels (et plus généralement avec des singularités surconvergentes le long d'un diviseur) isomorphismes seront notés $\tau$.
\end{vide}

\begin{vide}
[Image directe]
\label{defi-f+-f+(m)}
Soit $f\colon \PP' \rightarrow \PP$ un morphisme propre de $\V$-schémas formels lisses.
Les foncteurs images directes par $f$ de niveau $m$ préservent la cohérence et seront notés (s'il n'y a pas de confusion possible entre les deux)
$f ^{(m)} _+\colon D ^\mathrm{b} _\mathrm{coh} (\smash{\widehat{\D}} ^{(m)} _{\PP'})
\rightarrow
D ^\mathrm{b} _\mathrm{coh} (\smash{\widehat{\D}} ^{(m)} _{\PP})$,
$f ^{(m)} _+\colon D ^\mathrm{b} _\mathrm{coh} (\smash{\widehat{\D}} ^{(m)} _{\PP',\Q})
\rightarrow
D ^\mathrm{b} _\mathrm{coh} (\smash{\widehat{\D}} ^{(m)} _{\PP,\Q})$.
Ces deux foncteurs commutent à $-\otimes _\Z \Q$, i.e.,
pour tout
$\E'\in D ^\mathrm{b} _\mathrm{coh} (\smash{\widehat{\D}} ^{(m)} _{\PP'})$,
$f ^{(m)} _+ (\E _\Q) \riso f ^{(m)} _+ (\E') _\Q$.
S'il n'y a pas de risque de confusion et pour alléger les notations,
nous écrirons $f  _+$ au lieu de $f ^{(m)} _+$.
Le foncteur image directe par $f$ sera noté
$f_+\colon D ^\mathrm{b} _\mathrm{coh} (\smash{\D} ^{\dag} _{\PP'})
\rightarrow
D ^\mathrm{b} _\mathrm{coh} (\smash{\D} ^{\dag} _{\PP})$, de même avec des $\Q$.

Soient $f, f '\colon \PP' \rightarrow \PP$ deux morphismes de $\V$-schémas formels lisses
induisant le même morphisme $P' \to P$ au niveau des fibres spéciales.
On dispose de même que pour l'image inverse extraordinaire 
des isomorphismes de recollement $f _{+} \riso f ' _{+}$ satisfaisant aux formules de transitivité usuelles (voir \cite{Be2}). 
\end{vide}

\begin{vide}
[Foncteur cohomologique local]
\label{Gamma=u+circu!}
Soit $\PP$ un $\V$-schéma formel lisse, $X$ un sous-schéma fermé de $P$. On notera
$\R \underline{\Gamma} ^\dag _{X}$ le foncteur cohomologique local défini dans \cite[2]{caro_surcoherent} (conjecturalement égal à celui défini par Berthelot). 
Rappelons que lorsqu'il existe un morphisme $u\colon \X \hookrightarrow \PP$ de $\V$-schémas formels lisses dont la réduction induite modulo $\pi$ est l'immersion fermée canonique $X \hookrightarrow P$, d'après le dernier point de \cite[1.15]{caro_surholonome}, 
$\R \underline{\Gamma} ^\dag _{X}  \riso u _{+} u ^{!} $.

\end{vide}

\subsection{Catégories de $\D$-modules arithmétiques sur les $d$-cadres}

\begin{defi}
\label{defi-cad}
Un {\og $d$-cadre $(\PP, T,X,Y)$\fg} est la donnée d'un $\V$-schéma formel séparé et lisse $\PP$, 
d'un diviseur $T$ de $P$, d'un sous-schéma fermé $X$ de $P$ tels que
$Y=X \setminus T$.
 Un $d$-cadre $(\PP, T,X,Y)$
est {\og lisse en dehors du diviseur\fg} si $Y$ est lisse.

Un morphisme de $d$-cadres (lisses en dehors du diviseur) 
$\theta \colon  (\PP', T',X',Y')\to (\PP, T,X,Y)$
est la donnée d'un morphisme $f\colon  \PP' \to \PP$ tel que $f$ induise un morphisme $X'\to X$ tel que $f (Y') \subset Y$.
Si $a\colon X' \to X$, $b \colon  Y' \to Y$ sont les morphismes induits par $f$, le morphisme $\theta$ se note aussi $(f,a,b)$.
\end{defi}

Nous verrons dans les prochains chapitres que les catégories définies ci-dessous dans \ref{nota-surhol-T-pre}, 
\ref{nota-6.2.1dev} et \ref{nota-surhol-T} sont sous certaines conditions indépendants des choix faits.

\begin{nota}
[Cohérence et surcohérence]

\label{nota-surhol-T-pre}
Soit $(\PP, T,X,Y)$ un $d$-cadre 
(voir les conventions de \ref{defi-cad}).

$\bullet$ On note $(F\text{-})\mathrm{Coh} (\PP, T,X/K)$ la catégorie des $(F\text{-})\D ^{\dag} _{\PP}(\hdag T) _{\Q}$-modules cohérents à support dans $X$. 
On note $(F\text{-})\mathrm{Surcoh} (\PP, T,X/K)$ la catégorie des $(F\text{-})\D ^{\dag} _{\PP}(\hdag T) _{\Q}$-modules surcohérents à support dans $X$ (la surcohérence est définie dans \cite{caro_surcoherent}). 

$\bullet$ On note $(F\text{-})D ^\mathrm{b} _\mathrm{coh}  (\PP, T, X/K)$ la sous-catégorie pleine de 
$(F\text{-})D ^\mathrm{b} _\mathrm{coh} (\D ^\dag _{\PP} (\hdag T) _\Q)$ des $(F\text{-})$complexes à support dans $X$. 
On note $(F\text{-})D ^\mathrm{b} _\mathrm{surcoh}  (\PP, T, X/K)$ la sous-catégorie pleine de 
$(F\text{-})D ^\mathrm{b} _\mathrm{surcoh} (\D ^\dag _{\PP} (\hdag T) _\Q)$ des $(F\text{-})$complexes à support dans $X$.

  $\bullet$ Lorsque le diviseur $T$ est vide, on omet de l'indiquer dans les précédentes catégories.

\end{nota}

\begin{rema}
Avec les notations de \ref{nota-surhol-T-pre}, lorsque le diviseur $T$ n'est pas vide,
un objet de $\mathrm{Surcoh} (\PP, T,X/K)$ n'est pas toujours $\D ^{\dag} _{\PP,\Q}$-surcohérent (sauf si le module est muni
d'une structure de Frobenius: voir \cite{caro-Tsuzuki-2008}). Il ne faut donc pas confondre avec les catégories introduites dans
\cite{caro_surcoherent} ou \cite{caro_surholonome} (voir aussi \ref{nota-surhol-T}).
Contrairement à celles introduites dans 
\cite{caro_surcoherent},
les catégories $\mathrm{Surcoh} (\PP, T,X/K)$ sont suffisamment grosses pour contenir tous les isocristaux surcohérents
sur $(Y,X)/K$ lorsque $Y$ est lisse. 
\end{rema}

Rappelons les notations (voir \cite[3.1.2 et 3.5.10]{caro-pleine-fidelite}) et définitions
(voir \cite[5.4.5 et 5.4.7]{caro-pleine-fidelite}) suivantes:

\begin{defi}
  \label{defi-part-surcoh}
Soit $(\PP, T,X,Y)$ un $d$-cadre lisse en dehors du diviseur
(voir les conventions de \ref{defi-cad}). 

\begin{itemize}
\item On note $\mathrm{Isoc} ^{\dag \dag} (\PP, T, X/K)$ la sous-catégorie pleine de $\mathrm{Surcoh}  (\PP, T, X/K)$
des $\D ^{\dag} _{\PP}(\hdag T) _{\Q}$-modules surcohérents $\E$ tels que, en posant $\U := \PP \setminus T$, 
le module $\E |\U$ soit dans l'image essentielle du foncteur $\sp _{Y\hookrightarrow \U, +}$ (défini dans \cite{caro-construction}, ce qui est décrit ici dans \ref{prop-donnederecol-dag}).
Les objets de $\mathrm{Isoc} ^{\dag \dag} (\PP, T, X/K)$ sont {\og les isocristaux partiellement surcohérents sur $(\PP, T, X/K)$\fg}.
Lorsque $\PP$ est propre, on omet le qualificatif {\og partiellement\fg}. 

\item La catégorie $(F\text{-})\mathrm{Isoc} ^{\dag \dag} (\PP, T, X/K)$ ne dépend, à équivalence canonique de catégories près, ni du choix de l'immersion fermée
$X \hookrightarrow \PP$ et ni de celui du diviseur $T$ tel que $Y = X \setminus T$ (mais seulement de $(Y,X)/K$).
On note alors
sans ambiguïté 
$(F\text{-})\mathrm{Isoc}^{\dag \dag} (Y,X/K)$
à la place de $(F\text{-})\mathrm{Isoc} ^{\dag \dag} (\PP, T, X/K)$.
Ses objets sont les {\og $(F\text{-})$isocristaux partiellement surcohérents sur $(Y,X)/K$\fg} 
ou simplement {\og $(F\text{-})$isocristaux surcohérents sur $(Y,X)/K$\fg}.

\item Lorsque $X$ est propre, la catégorie 
$(F\text{-})\mathrm{Isoc}^{\dag \dag} (Y,X/K)$ ne dépend pas non plus du choix de la compactification propre $X$ de $Y$.
On la note alors
$(F\text{-})\mathrm{Isoc}^{\dag \dag} (Y/K)$.
Ses objets sont les {\og $(F\text{-})$isocristaux surcohérents sur $Y/K$\fg}.
\end{itemize}

\end{defi}

\begin{nota}
[Isocristaux]
\label{nota-6.2.1dev}
Soit $(\PP, T,X,Y)$ un $d$-cadre lisse en dehors du diviseur.
On note $(F\text{-})D ^\mathrm{b} _\mathrm{isoc} (\PP, T, X/K)$ la sous-catégorie pleine de 
$(F\text{-})D ^\mathrm{b} _\mathrm{coh} (\D ^\dag _{\PP} (\hdag T) _\Q)$ 
des $(F\text{-})$complexes dont les espaces de cohomologie appartiennent à 
$(F\text{-})\mathrm{Isoc} ^{\dag \dag} (\PP, T, X/K)$.
Rappelons que lorsque $T$ est vide, on n'indique pas i.e. on écrit
$(F\text{-})D ^\mathrm{b} _\mathrm{isoc} (\PP, X/K)$ au lieu de 
$(F\text{-})D ^\mathrm{b} _\mathrm{isoc} (\PP, \emptyset, X/K)$.
\end{nota}

\begin{nota}
On note $\mathrm{oub} _{T}$ le foncteur oubli canonique $(F\text{-})D ^\mathrm{b} (\D ^\dag _{\PP} (\hdag T) _\Q) \to (F\text{-})D ^\mathrm{b}  (\D ^\dag _{\PP,\Q})$ 
et le foncteur extension $(\hdag T) = \D ^\dag _{\PP} (\hdag T) _\Q \otimes _{\D ^\dag _{\PP,\Q}} - \colon (F\text{-})D ^\mathrm{b} _\mathrm{coh} (\D ^\dag _{\PP,\Q} )\to (F\text{-})D ^\mathrm{b} _\mathrm{coh} (\D ^\dag _{\PP} (\hdag T) _\Q)$.
\end{nota}

\begin{nota}
[Surholonomie]

\label{nota-surhol-T}

Pour la notion de surholonomie, on se reportera à \cite{caro_surholonome}.

$\bullet$ On notera $(F\text{-})\mathrm{Surhol} (\PP, T, X/K)$ la catégorie des 
$(F\text{-})\D ^\dag _{\PP,\Q}$-modules surholonomes (après tout changement de la base $\V$ d'après la convention énoncée au début de ce papier)
$\E$ à support dans $X$ tels que le morphisme canonique $\E \to \E (\hdag T)$ soit un isomorphisme. 

$\bullet$
On désigne par $(F\text{-})D ^\mathrm{b} _\mathrm{surhol} (\D ^\dag _{\PP,\Q})$ la sous-catégorie pleine de 
$(F\text{-})D ^\mathrm{b}  (\D ^\dag _{\PP,\Q})$ des 
 $(F\text{-})$complexes surholonomes (après tout changement de la base $\V$).

$\bullet$ On note $(F\text{-})D ^\mathrm{b} _\mathrm{surhol} (\D ^\dag _{\PP} (\hdag T) _\Q)$  
la sous-catégorie 
  pleine de $(F\text{-})D ^\mathrm{b} _\mathrm{coh} (\D ^\dag _{\PP} (\hdag T) _\Q)$ des $(F\text{-})$complexes $\FF$ tels que 
  $oub _{T} (\FF)\in (F\text{-})D ^\mathrm{b} _\mathrm{surhol} (\D ^\dag _{\PP,\Q})$.

$\bullet$ On note $(F\text{-})D ^\mathrm{b} _\mathrm{surhol}  (\PP, T, X/K)$ la sous-catégorie pleine de $(F\text{-})D ^\mathrm{b} _\mathrm{surhol} (\D ^\dag _{\PP} (\hdag T) _\Q)$ des $(F\text{-})$complexes à support dans $X$. 

  $\bullet$ Lorsque le diviseur $T$ est vide, on omet de l'indiquer dans les précédentes catégories.
 \end{nota}

Énoncé d'abord le premier résultat d'indépendance le plus immédiat qui nous sera utile pas la suite: 
\begin{lemm}
[Indépendance par rapport à $X$]
\label{indt-X}
Soit $(\PP, T,X,Y)$ un $d$-cadre (resp. un $d$-cadre lisse en dehors du diviseur). 
Posons $\mathfrak{C}$ pour 
$(F\text{-})D ^\mathrm{b} _\mathrm{surcoh}$ 
ou $\mathrm{Surcoh}$
ou 
$(F\text{-})D ^\mathrm{b} _\mathrm{surhol}$
ou 
$(F\text{-})\mathrm{Surhol}$
(resp. 
$(F\text{-})D ^\mathrm{b} _\mathrm{isoc}$
ou 
$(F\text{-})\mathrm{Isoc} ^{\dag \dag}$).
Soit $X'$ la clôture de $X \setminus T$ dans $P$. 
Alors $\mathfrak{C}(\PP, T,X/K)= \mathfrak{C}(\PP, T,X'/K)$.
\end{lemm}

\begin{proof}
Soit $\E$ un objet de $\mathfrak{C}(\PP, T,X/K)$. 
On dispose dans $\mathfrak{C}(\PP, T,X/K)$ de la flèche 
$\R \underline{\Gamma} ^\dag _{X'} (\E)  \to \E$.
Par \cite[4.3.12]{Be1}, pour prouver que cette flèche est un isomorphe, 
il suffit de le vérifier 
en dehors de $T$, ce qui est immédiat.  
\end{proof}

\begin{rema}
On dispose des factorisations 
$\mathrm{oub} _{T} 
\colon 
(F\text{-})D ^\mathrm{b} _\mathrm{surhol}  (\PP, T, X/K)
\to
(F\text{-})D ^\mathrm{b} _\mathrm{surhol}  (\PP,X/K)$
et, par stabilité de la surholonomie,
 $(\hdag T) 
 \colon 
 (F\text{-})D ^\mathrm{b} _\mathrm{surhol}  (\PP,X/K)
 \to 
 (F\text{-})D ^\mathrm{b} _\mathrm{surhol}  (\PP, T, X/K)$.
 
 Par contre, on prendra garde au fait que l'on ne dispose pas de la factorisation 
$\mathrm{oub} _{T} 
\colon 
D ^\mathrm{b} _\mathrm{surcoh}  (\PP, T, X/K)
\not \to
D ^\mathrm{b} _\mathrm{surcoh}  (\PP,X/K)$.
En effet, il est faux en général (i.e. sans structure de Frobenius) 
qu'un $\D ^\dag _{\PP} (\hdag T) _\Q$-surcohérent soit
$\D ^\dag _{\PP,\Q}$-surcohérent. 
\end{rema}

\begin{rema}
\label{rema-cv-surhol}
\begin{itemize}
\item Soient $\U$ un $\V$-schéma formel séparé et lisse, $Y$ un sous-schéma fermé de $U$ lisse sur $k$. 
Remarquons que grâce à \cite[7.3]{caro_surholonome}, 
un objet de $D ^\mathrm{b} _\mathrm{coh} (\D ^\dag _{\U,\Q})$ dont les espaces de cohomologie sont dans l'image essentielle 
du foncteur $\sp _{Y\hookrightarrow \U,+}$ (voir \ref{prop-donnederecol-dag}) est alors un objet de
$D ^\mathrm{b} _\mathrm{surhol} (\D ^\dag _{\U,\Q})$ (nous n'avons pas besoin de structure de Frobenius). 
On dispose ainsi de l'inclusion canonique $(F\text{-})D ^\mathrm{b} _\mathrm{isoc} (\U, Y/K) \subset
(F\text{-})D ^\mathrm{b} _\mathrm{surhol} (\U, Y/K) $.

\item Soit $(\PP, T,X,Y)$ un $d$-cadre lisse en dehors du diviseur.
Si on rajoute des structure de Frobenius, on obtient plus généralement, grâce à \cite{caro-Tsuzuki-2008},
l'inclusion canonique
$F\text{-}D ^\mathrm{b} _\mathrm{isoc} (\PP, T, X/K) \subset
F\text{-}D ^\mathrm{b} _\mathrm{surhol} (\PP, T, X/K)$.
\end{itemize}

\end{rema}

\begin{rema}
\label{rema-surhol-T}
On dispose pour tout entier $r$ de la factorisation 
$\mathcal{H} ^{r}\colon  (F\text{-})D ^\mathrm{b} _\mathrm{coh}(\PP, T, X/K) 
\to 
\mathrm{Coh} (\PP, T, X/K) $.
De même pour la surcohérence ou la surholonomie (voir \ref{surhol-espcoh}).
\end{rema}

\begin{lemm}
\label{cat-T-sansT}
On désigne par $(F\text{-})D ^\mathrm{b} _{\mathrm{surhol} , T}(\D ^\dag _{\PP,\Q})$
  la sous-catégorie pleine de $(F\text{-})D ^\mathrm{b} _\mathrm{surhol} (\D ^\dag _{\PP,\Q})$
  des $(F\text{-})$complexes $\E$ tels que le morphisme $\E \to \E (\hdag T)$ canonique de $(F\text{-})D ^\mathrm{b}  (\D ^\dag _{\PP,\Q})$ soit un isomorphisme. 
  
Avec les notations de \ref{nota-surhol-T},
les foncteurs $\mathrm{oub} _T$ et $(\hdag T)$ induisent des équivalences quasi-inverses entre les catégories
$(F\text{-})D ^\mathrm{b} _{\mathrm{surhol} , T}(\D ^\dag _{\PP,\Q})$ 
et $(F\text{-})D ^\mathrm{b} _\mathrm{surhol} (\D ^\dag _{\PP} (\hdag T) _\Q)$.
Dans la pratique, on pourra identifier ces deux catégories. 
\end{lemm}

 \begin{proof}
 Soit $\E$ un objet de $(F\text{-})D ^\mathrm{b} _{\mathrm{surhol} , T}(\D ^\dag _{\PP,\Q})$. 
  Le morphisme canonique $\E \to \E (\hdag T)$ est par définition un isomorphisme. D'où : $oub _{T} (\E (\hdag T)) \in (F\text{-})D ^\mathrm{b} _\mathrm{surhol} (\D ^\dag _{\PP,\Q})$. 
Cela entraîne que le foncteur $(\hdag T)$ se factorise sous la forme 
  $(\hdag T)\colon (F\text{-})D ^\mathrm{b} _{\mathrm{surhol} , T}(\D ^\dag _{\PP,\Q}) \to (F\text{-})D ^\mathrm{b} _\mathrm{surhol} (\D ^\dag _{\PP} (\hdag T) _\Q)$.

D'un autre côté, soit $\FF$ un objet de $(F\text{-})D ^\mathrm{b} _\mathrm{surhol} (\D ^\dag _{\PP} (\hdag T) _\Q)$. Comme $\FF$ est $\D ^\dag _{\PP} (\hdag T) _\Q$-cohérent et $ oub _{T} (\FF)$ est (en particulier) $\D ^\dag _{\PP,\Q}$-cohérent, on déduit de \cite[4.3.12]{Be1} que 
le morphisme canonique $ oub _{T} (\FF) \to  (\hdag T) (oub _{T}(\FF ))$ est un isomorphisme. 
On obtient ainsi la factorisation 
$oub _{T}$ : 
$(F\text{-})D ^\mathrm{b} _\mathrm{surhol} (\D ^\dag _{\PP} (\hdag T) _\Q)
\to 
(F\text{-})D ^\mathrm{b} _{\mathrm{surhol} , T}(\D ^\dag _{\PP,\Q})$.
    
Ainsi, nous avons vérifiés que les foncteurs $oub _{T}$ et $(\hdag T)$ induisent les équivalences quasi-inverses de la forme
$oub _{T}\colon (F\text{-})D ^\mathrm{b} _\mathrm{surhol} (\D ^\dag _{\PP} (\hdag T) _\Q) \cong (F\text{-})D ^\mathrm{b} _{\mathrm{surhol} , T}(\D ^\dag _{\PP,\Q})$
et
$(\hdag T)\colon (F\text{-})D ^\mathrm{b} _{\mathrm{surhol} , T}(\D ^\dag _{\PP,\Q}) \cong (F\text{-})D ^\mathrm{b} _\mathrm{surhol} (\D ^\dag _{\PP} (\hdag T) _\Q)$.

\end{proof}

\begin{vide}
On dispose du foncteur oubli
pleinement fidèle $\mathrm{oub} _T\colon  
\smash{\underset{\longrightarrow}{LD}} ^\mathrm{b} _{\Q,\mathrm{qc}}  
(\widehat{\D} ^{(\bullet)} _{\X} ( T) )
\to 
\smash{\underset{\longrightarrow}{LD}} ^\mathrm{b} _{\Q,\mathrm{qc}}  
(\widehat{\D} ^{(\bullet)} _{\X} )$
et du foncteur extension 
$(\hdag T)\colon 
\smash{\underset{\longrightarrow}{LD}} ^\mathrm{b} _{\Q,\mathrm{qc}}  
(\widehat{\D} ^{(\bullet)} _{\X} )
\to 
\smash{\underset{\longrightarrow}{LD}} ^\mathrm{b} _{\Q,\mathrm{qc}}  
(\widehat{\D} ^{(\bullet)} _{\X} (T))$.
Par \cite[1.1.8]{caro_courbe-nouveau}, 
pour tout $\FF \in \smash{\underset{\longrightarrow}{LD}} ^\mathrm{b} _{\Q,\mathrm{qc}}  
(\widehat{\D} ^{(\bullet)} _{\X} ( T) )$
le morphisme canonique 
$(\hdag T) \circ \mathrm{oub} _T (\FF) \to \FF$ est un isomorphisme
(par associativité du produit tensoriel complété, on s'était ramené au cas où $\FF$ était associé à 
$\O _{\X} (\hdag T) _{\Q}$).

Notons $\smash{\underset{\longrightarrow}{LD}} ^\mathrm{b} _{\Q,\mathrm{qc}, T}  
(\widehat{\D} ^{(\bullet)} _{\X})$ la sous-catégorie pleine de
$\smash{\underset{\longrightarrow}{LD}} ^\mathrm{b} _{\Q,\mathrm{qc}}  
(\widehat{\D} ^{(\bullet)} _{\X} )$ 
des complexes $\E$ tel que le morphisme canonique
$ \E \to \mathrm{oub} _T \circ (\hdag T) (\E)$ soit un isomorphisme. 

\end{vide}

Le lemme \ref{cat-T-sansT} ci-dessus bénéficie d'un analogue quasi-cohérent via le lemme ci-dessous: 
\begin{lemm}
\label{cat-T-sansT-qc}
Les foncteurs $\mathrm{oub} _T$ et $(\hdag T)$ induisent des équivalences quasi-inverses entre les catégories
$ \smash{\underset{\longrightarrow}{LD}} ^\mathrm{b} _{\Q,\mathrm{qc}}  
(\widehat{\D} ^{(\bullet)} _{\X} ( T))$
et 
$\smash{\underset{\longrightarrow}{LD}} ^\mathrm{b} _{\Q,\mathrm{qc}, T}  
(\widehat{\D} ^{(\bullet)} _{\X})$.
\end{lemm}

\begin{proof}
1) Vérifions d'abord que le foncteur $\mathrm{oub} _T$ induit la factorisation requise. 
Soit $\FF \in \smash{\underset{\longrightarrow}{LD}} ^\mathrm{b} _{\Q,\mathrm{qc}}  
(\widehat{\D} ^{(\bullet)} _{\X} ( T))$.
Comme le morphisme canonique
$(\hdag T) \circ \mathrm{oub} _T (\FF) \to \FF$ est un isomorphisme, 
on obtient l'isomorphisme canonique 
$\mathrm{oub} _T \circ (\hdag T) \circ \mathrm{oub} _T (\FF) \riso \mathrm{oub} _T(\FF)$.
Comme le morphisme canonique composé 
$\mathrm{oub} _T(\FF) 
\to 
\mathrm{oub} _T \circ (\hdag T) \circ \mathrm{oub} _T (\FF) \to \mathrm{oub} _T(\FF)$
est l'identité, il en résulte que 
le premier morphisme canonique
$\mathrm{oub} _T(\FF) 
\to 
\mathrm{oub} _T \circ (\hdag T) \circ \mathrm{oub} _T (\FF)$
est un isomorphisme. 
Cela signifie que 
$\mathrm{oub} _T  (\FF) 
\in 
\smash{\underset{\longrightarrow}{LD}} ^\mathrm{b} _{\Q,\mathrm{qc}, T}  
(\widehat{\D} ^{(\bullet)} _{\X})$.

2) On sait déjà que le foncteur canonique 
$(\hdag T) \circ \mathrm{oub} _T \to Id$ est un isomorphisme. 
Réciproquement, il est tautologique que, pour tout 
$\E \in \smash{\underset{\longrightarrow}{LD}} ^\mathrm{b} _{\Q,\mathrm{qc}, T}  
(\widehat{\D} ^{(\bullet)} _{\X})$, le morphisme canonique 
$\E \to \mathrm{oub} _T \circ (\hdag T) (\E)$ est un isomorphisme.
\end{proof}

\begin{vide}
  [Théorème de Berthelot-Kashiwara]
\label{Berthelot-Kashiwara}
La version arithmétique due à Berthelot du théorème de Kashiwara s'écrit de la façon suivante (les versions surcohérentes ou surholonomes se déduisent de manière immédiate du cas cohérent) :
soient $u\colon \X \hookrightarrow \PP$ une immersion fermée de $\V$-schémas formels lisses, $T$ un diviseur de $X$ tel que $T \cap X$ soit un diviseur de $X$.

\begin{enumerate}
    \item  Pour tout $\D ^\dag _{\PP} (\hdag T) _{\Q}$-module cohérent
    $\E$ à support dans $X$,
    pour tout $\D ^\dag _{\X } (\hdag T \cap X) _{ \Q}$-module cohérent $\FF$,
    pour tout entier $j \not =0$, 
    $\mathcal{H} ^j {u } _+ (\E ) =0$ et $\mathcal{H} ^j  u  ^!(\FF) =0$.
    \item Posons
    $\mathfrak{C}=(F\text{-})\mathrm{Coh}$
    ou 
$\mathfrak{C}=(F\text{-})\mathrm{Surcoh}$
ou 
$\mathfrak{C}=(F\text{-})\mathrm{Surhol}$
ou 
$\mathfrak{C}=
(F\text{-})D ^\mathrm{b} _\mathrm{coh} $
ou 
$\mathfrak{C}=
(F\text{-})D ^\mathrm{b} _\mathrm{surcoh} $
ou 
$\mathfrak{C}=
(F\text{-})D ^\mathrm{b} _\mathrm{surhol} $.
Les foncteurs ${u } _+$ et $ u  ^!$ induisent des équivalences quasi-inverses entre 
$\mathfrak{C} (\PP, T, X/K)$ et $\mathfrak{C} (\X, T\cap X,  X/K)$.
      \end{enumerate}

\end{vide}

\subsection{Recollement d'isocristaux: cas de la compactification partielle lisse}

Nous donnerons dans la section \ref{section-analog-m} une variante de niveau $m$ (fixé) des procédures de recollement de \cite[2.5]{caro-construction}. Comme nous procéderons par analogie et comme nous utiliserons aussi le cas où le niveau n'est pas fixé (i.e. la situation de \cite[2.5]{caro-construction}), rappelons-en quelques points. 
Dans cette section, nous fixerons les notations suivantes:
\begin{vide}
[Notations]
\label{nota-cohPXT}
Soient $\PP $ un $\V$-schéma formel séparé et lisse,
$T$ un diviseur de $P$, 
$X$ une sous-variété lisse fermée de $P$ tels que $Y := X \setminus T$ soit dense dans $X$ (i.e. $T\cap X$ est un diviseur de $X$).

Fixons $(\PP _{\alpha}) _{\alpha \in \Lambda}$ un recouvrement d'ouverts de $\PP$.
On note $\PP _{\alpha \beta}:= \PP _\alpha \cap \PP _\beta$,
$\PP _{\alpha \beta \gamma}:= \PP _\alpha \cap \PP _\beta \cap \PP _\gamma$,
$X _\alpha := X \cap P _\alpha$,
$X_{\alpha \beta } := X _\alpha \cap X _\beta$ et
$X_{\alpha \beta \gamma } := X _\alpha \cap X _\beta \cap X _\gamma $.
On suppose de plus que pour tout $\alpha\in \Lambda$, $X _\alpha$ est affine
(par exemple lorsque le recouvrement $(\PP _{\alpha}) _{\alpha \in \Lambda}$ est affine).
Comme $P$ est  séparé, pour tous $\alpha,\beta ,\gamma \in \Lambda$,
$X_{\alpha \beta }$ et $X_{\alpha \beta \gamma }$ sont donc affines.

Pour tout triplet $(\alpha, \, \beta,\, \gamma)\in \Lambda ^3$, choisissons
$\X _\alpha$ (resp. $\X _{\alpha \beta}$, $\X _{\alpha \beta \gamma}$)
des $\V$-schémas formels lisses relevant $X _\alpha$
(resp. $X _{\alpha \beta}$, $X _{\alpha \beta \gamma}$),
$p _1 ^{\alpha \beta}\colon \X  _{\alpha \beta} \rightarrow \X _{\alpha}$
(resp. $p _2 ^{\alpha \beta}\colon \X  _{\alpha \beta} \rightarrow \X _{\beta}$)
des relèvements de
$X  _{\alpha \beta} \rightarrow X _{\alpha}$
(resp. $X  _{\alpha \beta} \rightarrow X _{\beta}$).
Rappelons que grâce à Elkik (\cite{elkik} de tels relèvements existent bien.

De même, pour tout triplet $(\alpha,\,\beta,\,\gamma )\in \Lambda ^3$, on choisit des relèvements
$p _{12} ^{\alpha \beta \gamma}\colon \X  _{\alpha \beta \gamma} \rightarrow \X  _{\alpha \beta} $,
$p _{23} ^{\alpha \beta \gamma}\colon \X  _{\alpha \beta \gamma} \rightarrow \X  _{\beta \gamma} $,
$p _{13} ^{\alpha \beta \gamma}\colon \X  _{\alpha \beta \gamma} \rightarrow \X  _{\alpha \gamma} $,
$p _1 ^{\alpha \beta \gamma}\colon \X  _{\alpha \beta \gamma} \rightarrow \X  _{\alpha} $,
$p _2 ^{\alpha \beta \gamma}\colon \X  _{\alpha \beta \gamma} \rightarrow \X  _{\beta} $,
$p _3 ^{\alpha \beta \gamma}\colon \X  _{\alpha \beta \gamma} \rightarrow \X  _{\gamma} $
induisant les morphismes canoniques au niveau des fibres spéciales.
\end{vide}

\begin{defi}
Pour tout $\alpha \in \Lambda$, donnons-nous $\E _\alpha$,
un $(F\text{-})\smash{\D} ^{\dag} _{\X _{\alpha} } (\hdag T  \cap X _{\alpha}) _{\Q}$-module cohérent.
Une \textit{donnée de recollement} sur la famille $(\E _{\alpha})_{\alpha \in \Lambda}$ est 
la donnée pour tous $\alpha,\,\beta \in \Lambda$ d'un isomorphisme
$(F\text{-})\smash{\D} ^{\dag} _{\X _{\alpha \beta} }(\hdag T  \cap X _{\alpha \beta}) _{ \Q}$-linéaire de la forme
$ \theta _{  \alpha \beta} \colon   p _2  ^{\alpha \beta !} (\E _{\beta}) \riso p  _1 ^{\alpha \beta !} (\E _{\alpha}),$
ceux-ci vérifiant la condition de cocycle :
$\theta _{13} ^{\alpha \beta \gamma }=
\theta _{12} ^{\alpha \beta \gamma }
\circ
\theta _{23} ^{\alpha \beta \gamma }$,
où $\theta _{12} ^{\alpha \beta \gamma }$, $\theta _{23} ^{\alpha \beta \gamma }$
et $\theta _{13} ^{\alpha \beta \gamma }$ sont définis par les diagrammes commutatifs
\begin{equation}
  \label{diag1-defindonnederecol}
\xymatrix  @R=0,3cm {
{  p _{12} ^{\alpha \beta \gamma !} p  _2 ^{\alpha \beta !}  (\E _\beta )}
\ar[r] ^-{\tau} _{\sim}
\ar[d] ^-{p _{12} ^{\alpha \beta \gamma !} (\theta _{\alpha \beta})} _{\sim}
&
{p _2 ^{\alpha \beta \gamma!}  (\E _\beta )}
\ar@{.>}[d] ^-{\theta _{12} ^{\alpha \beta \gamma }}
\\
{ p _{12} ^{\alpha \beta \gamma !}  p  _1 ^{\alpha \beta !}  (\E _\alpha)}
\ar[r]^{\tau} _{\sim}
&
{p _1 ^{\alpha \beta \gamma!}(\E _\alpha),}
}
%
%
\xymatrix  @R=0,3cm {
{  p _{23} ^{\alpha \beta \gamma !} p  _2 ^{\beta \gamma!}  (\E _\gamma )}
\ar[r] ^-{\tau} _{\sim}
\ar[d] ^-{p _{23} ^{\alpha \beta \gamma !} (\theta _{ \beta \gamma})} _{\sim}
&
{p _3 ^{\alpha \beta \gamma!}  (\E _\gamma )}
\ar@{.>}[d] ^-{\theta _{23} ^{\alpha \beta \gamma }}
\\
{ p _{23} ^{\alpha \beta \gamma !}  p  _1 ^{ \beta \gamma !}  (\E _\beta)}
\ar[r]^{\tau} _{\sim}
&
{p _2 ^{\alpha \beta \gamma!}(\E _\beta),}
}
%
%
\xymatrix  @R=0,3cm {
{  p _{13} ^{\alpha \beta \gamma !} p  _2 ^{\alpha \gamma !}  (\E _\gamma )}
\ar[r] ^-{\tau} _{\sim}
\ar[d] ^-{p _{13} ^{\alpha \beta \gamma !} (\theta _{\alpha \gamma})} _{\sim}
&
{p _3 ^{\alpha \beta \gamma!}  (\E _\gamma )}
\ar@{.>}[d]^{\theta _{13} ^{\alpha \beta \gamma }}
\\
{ p _{13} ^{\alpha \beta \gamma !}  p  _1 ^{\alpha \gamma !}  (\E _\alpha)}
\ar[r]^{\tau} _{\sim}
&
{p _1 ^{\alpha \beta \gamma!}(\E _\alpha),}
}
\end{equation}
où les isomorphismes de la forme $\tau $ désignent les isomorphismes canoniques de recollement  (voir \ref{nota-tau}).
\end{defi}

\begin{defi}
\label{defindonnederecoldag}
On construit la catégorie $(F\text{-})\mathrm{Coh} (X,\, (\X _\alpha) _{\alpha \in \Lambda},\, T\cap X/K)$ (resp. $(F\text{-})\mathrm{Isoc} ^{\dag \dag} (X,\, (\X _\alpha) _{\alpha \in \Lambda},\, T\cap X/K)$) de la manière
suivante :

\begin{itemize}
\item Un objet est une famille $(\E _\alpha) _{\alpha \in \Lambda}$
de $(F\text{-})\smash{\D} ^{\dag} _{\X _{\alpha} } (\hdag T  \cap X _{\alpha}) _{\Q}$-modules cohérents (resp. de $(F\text{-})\smash{\D} ^{\dag} _{\X _{\alpha} } (\hdag T  \cap X _{\alpha}) _{\Q}$-modules cohérents, $\O _{\X _{\alpha} } (\hdag T  \cap X _{\alpha}) _{\Q}$-cohérents)
munie d'une donnée de recollement $ (\theta _{\alpha\beta}) _{\alpha ,\beta \in \Lambda}$.

\item Un morphisme
$((\E _{\alpha})_{\alpha \in \Lambda},\, (\theta _{\alpha\beta}) _{\alpha ,\beta \in \Lambda})
\rightarrow
((\E ' _{\alpha})_{\alpha \in \Lambda},\, (\theta '_{\alpha\beta}) _{\alpha ,\beta \in \Lambda})$
est une famille de morphismes $(F\text{-})\smash{\D} ^{\dag} _{\X _{\alpha} } (\hdag T  \cap X _{\alpha}) _{\Q}$-linéaire $f _\alpha\colon \E _\alpha \rightarrow \E '_\alpha$
commutant aux données de recollement,
i.e., telle que le diagramme suivant soit commutatif :
\begin{equation}
  \label{diag2-defindonnederecol}
\xymatrix  @R=0,3cm {
{ p _2  ^{\alpha \beta !} (\E _{\beta}) }
\ar[d] ^-{p _2  ^{\alpha \beta !} (f _{\beta}) }
\ar[r] ^-{\theta _{\alpha\beta}} _{\sim}
&
{  p  _1 ^{\alpha \beta !} (\E _{\alpha}) }
\ar[d] ^-{p  _1 ^{\alpha \beta !} (f _{\alpha})}
\\
{p _2  ^{\alpha \beta !} (\E '_{\beta})  }
\ar[r]^{\theta '_{\alpha\beta}} _{\sim}
&
{ p  _1 ^{\alpha \beta !} (\E '_{\alpha})  .}
}
\end{equation}

\end{itemize}
\end{defi}

\begin{vide}
[Recollement : cas de la compactification partielle lisse]
\label{prop-donnederecol-dag}
$\bullet$ Nous gardons les notations de \ref{defindonnederecoldag}.
D'après \cite[2.5.4]{caro-construction}, la catégorie $(F\text{-})\mathrm{Coh} (\PP, T,X/K)$ (voir la définition de \ref{nota-surhol-T-pre}) est canoniquement
isomorphe à la catégorie $(F\text{-})\mathrm{Coh} (X,\, (\X _\alpha) _{\alpha \in \Lambda},\, T\cap X)$.
Plus précisément, d'après la preuve de \cite[2.5.4]{caro-construction}, on dispose des deux foncteurs quasi-inverses canoniques
\begin{gather}
\label{def-Loc-dag}
\mathcal{L}oc\, :
(F\text{-})\mathrm{Coh} (\PP, T,X/K)
\rightarrow
(F\text{-})\mathrm{Coh} (X,\, (\X _\alpha) _{\alpha \in \Lambda},\, T\cap X/K), 
\\
\label{def-Recol-dag}
\mathcal{R}ecol
\colon 
(F\text{-})\mathrm{Coh} (X,\, (\X _\alpha) _{\alpha \in \Lambda},\, T\cap X/K)
\rightarrow
(F\text{-})\mathrm{Coh} (\PP, T,X/K),
\end{gather}
dont les constructions sont analogues à celles que l'on donnera au niveau $m$ dans 
\ref{defi-loc} et \ref{defi-reloc}.

$\bullet$ La catégorie $(F\text{-})\mathrm{Isoc} ^{\dag} (Y,X/K)$ des $(F\text{-})$isocristaux surconvergents sur $(Y,X)/K$ est canoniquement équivalente à $(F\text{-})\mathrm{Isoc} ^{\dag \dag} (X,\, (\X _\alpha) _{\alpha \in \Lambda},\, T\cap X/K)$. En composant cette équivalence avec le foncteur $\mathcal{R}ecol$ de \ref{def-Recol-dag}, on obtient alors le foncteur pleinement fidèle $\sp _{X\hookrightarrow \PP, T,+}\colon  
(F\text{-})\mathrm{Isoc} ^{\dag} (Y,X/K) \to  (F\text{-})\mathrm{Coh} (\PP, T,X/K)$ (voir \cite[2.5]{caro-construction}). 
Grâce à \cite[6.1.4]{caro_devissge_surcoh}, on vérifie que son image essentielle est égale à 
$(F\text{-})\mathrm{Isoc} ^{\dag \dag} (\PP, T, X/K)$ (notations de \ref{defi-part-surcoh}).
\end{vide}

\section{Sur la préservation de la surcohérence par image directe}
Le but de ce chapitre est de démontrer le théorème \ref{surhol-conjA}. 
Nous traitons d'abord dans une première partie le cas de la compactification partielle (i.e. $X$) lisse. Dans la deuxième partie, on s'y ramène dans le cas des isocristaux surcohérents par descente génériquement finie et étale. 
Le cas général s'établit alors par dévissage en isocristaux surcohérents. 
\subsection{Indépendance: cas de la compactification partielle lisse}

Dans le cas de la compactification lisse, on peut utiliser le théorème de Berthelot-Kashiwara (et on étend ainsi \cite[3.2.4]{caro_surcoherent}
pour la surcohérence {\og faible\fg}), 
ce qui permet d'obtenir le lemme suivant fondateur.

\begin{lemm}
[Indépendance par rapport au diviseur]
\label{lemme-coh-PXTindtP}
Soient $\PP$ un $\V$-schéma formel séparé et lisse, $T$ et $ T'$ deux diviseurs de $P$,
$X$ un sous-schéma fermé lisse de $P$
tel que $T\cap X= T'\cap X$. 
Posons
    $\mathfrak{C}=(F\text{-})\mathrm{Coh}$
    ou 
$\mathfrak{C}=(F\text{-})\mathrm{Surcoh}$
ou 
$\mathfrak{C}=(F\text{-})\mathrm{Surhol}$
ou 
$\mathfrak{C}=
(F\text{-})D ^\mathrm{b} _\mathrm{coh} $
ou 
$\mathfrak{C}=
(F\text{-})D ^\mathrm{b} _\mathrm{surcoh} $
ou 
$\mathfrak{C}=
(F\text{-})D ^\mathrm{b} _\mathrm{surhol} $.
On obtient alors les égalités 
$\mathfrak{C}(\PP, T, X/K)=\mathfrak{C} (\PP, T', X/K)$.
\end{lemm}

\begin{proof}
Considérons d'abord le cas des modules.
Le cas cohérent 
a été traitée dans \cite[5.1.3]{caro-pleine-fidelite}. 
En utilisant respectivement les versions surcohérentes ou surholonomes du théorème de Berthelot-Kashiwara (voir \ref{Berthelot-Kashiwara}), 
on procède de manière identique pour les autres cas. Le cas des complexes se déduit du cas des modules (e.g. on se rappelle que le foncteur extension $(\hdag D)$ est exact pour les modules cohérents lorsque $D$ est un diviseur). 
\end{proof}

\begin{lemm}
[Indépendance par rapport au schéma formel: cas des modules]
\label{coh-PXTindtP}
  Soit $(f,Id,Id) \colon  (\PP', T',X,Y)\to (\PP, T,X,Y)$ un morphisme de $d$-cadres avec $X$ lisse.

\begin{enumerate}
\item 
\label{coh-PXTindtP-i} 
Pour tout $\E \in (F\text{-})\mathrm{Surcoh}(\PP, T, X/K)$,
pour tout $\E '\in (F\text{-})\mathrm{Surcoh} (\PP', T', X/K)$,
pour tout $j \in \Z\setminus \{0\}$,
$$\mathcal{H} ^j (\R \underline{\Gamma} ^\dag _X f ^! (\E) ) =0,
\mathcal{H} ^j (f_+(\E')) =0.$$
De même en remplaçant {\og $\mathrm{Surcoh}$\fg} 
par {\og $\mathrm{Surhol}$\fg}.

\item 
\label{coh-PXTindtP-ii} 
Les foncteurs $\R \underline{\Gamma} ^\dag _X  f ^! $ et
$f _+$ induisent alors des équivalences quasi-inverses 
entre les catégories $(F\text{-})\mathrm{Surcoh} (\PP, T, X/K)$ et $(F\text{-})\mathrm{Surcoh} (\PP', T', X/K)$.
De même en remplaçant {\og $\mathrm{Surcoh}$\fg} 
par {\og $\mathrm{Surhol}$\fg}.
\end{enumerate}

\end{lemm}

\begin{proof}
Via \ref{lemme-coh-PXTindtP},
la preuve est identique à celle de \cite[5.1.4]{caro-pleine-fidelite}.
\end{proof}

Pour les complexes, la situation est plus délicate car on ne peut plus recoller pour valider les équivalences quasi-inverses: 

\begin{lemm}
[Indépendance par rapport au schéma formel: cas des complexes]
Soit $(f,Id,Id) \colon  (\PP', T',X,Y)\to (\PP, T,X,Y)$ un morphisme de $d$-cadres avec $X$ lisse. 
\label{coh-PXTindtPbis}
\begin{enumerate}

\item 
\label{coh-PXTindtP-iii}
On dispose des factorisations
$\R \underline{\Gamma} ^\dag _X  f ^! 
\colon 
(F\text{-})D ^\mathrm{b} _\mathrm{coh}  (\PP, T, X/K)
 \to
(F\text{-})D ^\mathrm{b} _\mathrm{coh}  (\PP', T', X/K)$
et
$f _+\colon 
(F\text{-})D ^\mathrm{b} _\mathrm{coh}  (\PP', T', X/K)
 \to
(F\text{-})D ^\mathrm{b} _\mathrm{coh}  (\PP, T, X/K)$.
De même en remplaçant l'indice {\og coh\fg} 
par respectivement {\og surcoh\fg} ou
{\og surhol\fg} ou {\og isoc\fg}.

\item 
\label{coh-PXTindtP-iv} 
Lorsque $f$ est soit propre ou soit une immersion ouverte, les factorisations $\R \underline{\Gamma} ^\dag _X  f ^! $ et
$f _+$ de \ref{coh-PXTindtP-iii} induisent alors des équivalences quasi-inverses.\end{enumerate}
\end{lemm}

\begin{proof}
Afin d'alléger les notations, le cas avec structure de Frobenius étant analogue, nous omettrons d'indiquer {\og $(F\text{-})$\fg} dans toutes les catégories. De plus, comme $P$, $P'$, $X$ sont lisses, 
on peut les supposer intègres. Le cas où $Y$ est vide implique que les catégories sont nulles. 
On peut en outre supposer $Y$ dense dans $X$. 
Dans ce cas, $f ^{-1}( T)$ est un diviseur de $P'$. 
Par \ref{lemme-coh-PXTindtP}, comme $f ^{-1}( T) \cap X = T' \cap X$, on se ramène au cas
$T' = f ^{-1}( T)$.

I) Vérifions à présent \ref{coh-PXTindtPbis}.\ref{coh-PXTindtP-iii}.
Soient $\E \in D ^\mathrm{b} _\mathrm{coh}  (\PP, T, X/K)$
et $\E ' \in D ^\mathrm{b} _\mathrm{coh}  (\PP', T', X/K)$.
Le fait que 
$\R \underline{\Gamma} ^\dag _X  f ^! (\E) 
\in 
D ^\mathrm{b} _\mathrm{coh}  (\PP', T', X/K)$ est local en $\PP'$ et
que $f _{+} (\E') \in D ^\mathrm{b} _\mathrm{coh}  (\PP, T, X/K)$ est local en $\PP$.
Dans les deux cas, on peut supposer $X$ affine et lisse. 
Il existe alors une immersion fermée de $\V$-schémas formels lisses de la forme
$u'\colon  \X \hookrightarrow \PP'$ relevant $X \hookrightarrow \PP'$. Posons $u:=f  \circ u '$.
Dans ce cas, 
$\R \underline{\Gamma} ^\dag _X  f ^! (\E) 
\riso u ' _{+} \circ u ^{\prime !}\circ  f ^! (\E) 
\riso u ' _{+} \circ u ^{ !} (\E) $.
Comme $\E$ est à support dans $X$, d'après le théorème de Berthelot-Kashiwara, 
$u ^{ !} (\E) $ est cohérent.
Il en résulte $\R \underline{\Gamma} ^\dag _X  f ^! (\E) 
\in 
D ^\mathrm{b} _\mathrm{coh}  (\PP', T', X/K)$. 
Enfin, le théorème de Berthelot-Kashiwara donne que
$u ^{\prime !} (\E')$ est cohérent et
$\E' \riso u ' _{+} \circ u ^{\prime !} (\E')$.
Il en résulte $f _{+} (\E') \riso u _{+} \circ u ^{\prime !} (\E')$
et donc que $f _{+} (\E')  \in D ^\mathrm{b} _\mathrm{coh}  (\PP, T, X/K)$.
On procède de manière identique pour les cas respectives.

II) Traitons à présent \ref{coh-PXTindtPbis}.\ref{coh-PXTindtP-iv} dans le cas non respectif.

1) Vérifions d'abord \ref{coh-PXTindtP-iv} lorsque $f$ est propre. 

1.i) Comme $f$ est propre, 
pour tout complexe $\E$ de $D ^\mathrm{b} _\mathrm{coh}  (\PP, T, X/K)$,
on dispose par adjonction du morphisme canonique
$f _+ \circ \R \underline{\Gamma} ^\dag _X  \circ f ^! (\E) \rightarrow \E$ (en effet, il s'agit d'appliquer \cite[1.2.10]{caro_courbe-nouveau} pour les complexes 
parfaits $\E$ et $\R \underline{\Gamma} ^\dag _X  \circ f ^! (\E) $ au morphisme canonique $\R \underline{\Gamma} ^\dag _X  \circ f ^! (\E) \rightarrow f ^! (\E) $).
Le fait que ce morphisme soit un isomorphisme est local en $\PP$. 
On peut donc supposer $\PP$ affine (et donc $X$). 
Il existe alors une immersion fermée de $\V$-schémas formels lisses de la forme
$u'\colon  \X \hookrightarrow \PP'$ relevant $X \hookrightarrow \PP'$. Posons $u:=f  \circ u '$.
Comme le morphisme canonique $\R \underline{\Gamma} ^\dag _{X }\to Id$ est canoniquement isomorphe au morphisme d'adjonction $u' _{ +} \circ u_{ } ^{\prime!} \to Id$ (voir \ref{Gamma=u+circu!}), 
on obtient par transitivité du morphisme d'adjonction que 
$f _+ \circ \R \underline{\Gamma} ^\dag _X  \circ f ^! (\E) \rightarrow \E$ est canoniquement isomorphe 
au morphisme d'adjonction 
$u_{ +} \circ u  ^{!} (\E) \to \E $.
Via le théorème de Kashiwara-Berthelot (voir \ref{Berthelot-Kashiwara}), celui-ci est un isomorphisme. 
Ainsi, le morphisme
$f _+ \circ \R \underline{\Gamma} ^\dag _X  \circ f ^! (\E) \rightarrow \E$
fonctoriel en $\E$ est un isomorphisme. 

1.ii) Réciproquement, 
soit $\E' \in D ^\mathrm{b} _\mathrm{coh}  (\PP', T', X/K)$.
Comme $f$ est propre, on dispose par adjonction du morphisme canonique 
$\E'\to   f ^!  \circ f _+  (\E') $. En lui appliquant le foncteur $\R \underline{\Gamma} ^\dag _X $, comme $\E'$ est à support dans $X$, on obtient le morphisme canonique
$\E'\to \R \underline{\Gamma} ^\dag _X\circ   f ^! \circ f _+  (\E') $. Le fait que ce morphisme soit un isomorphisme est local. 
On peut donc supposer qu'il existe une immersion fermée de $\V$-schémas formels lisses de la forme
$u'\colon  \X \hookrightarrow \PP'$ relevant $X \hookrightarrow \PP'$ et on pose $u:=f  \circ u '$.
Via le théorème de Berthelot-Kashiwara (voir \ref{Berthelot-Kashiwara}), 
il existe $\FF \in D ^\mathrm{b} _\mathrm{coh}  (\X, T\cap T, X/K)$ tel que 
$\E' \riso u _{+} ' (\FF)$. On ramène donc à supposer $\E' =u _{+} ' (\FF)$.
Par transitivité des morphismes d'adjonction de la forme $Id \to f ^! f _+$, 
on vérifie la commutativité du carré du diagramme ci-dessous
\begin{equation}
\notag
\xymatrix{
{u ' _+ (\FF) } 
\ar@{=}[dr] 
& 
{u ' _+ u ^{\prime !}  u ' _+ (\FF)  } 
\ar[l] _-{\mathrm{adj}}
\ar[r] ^-{\mathrm{adj}}
&
{u ' _+ u ^{\prime !} f ^! f _+ u ' _+ (\FF)  } 
\ar[d] ^-{\sim}
\\ 
{}
&
{u ' _+ (\FF) }  
\ar[r] ^-{\mathrm{adj}} _-{\sim}
\ar[u] ^-{\mathrm{adj}} _-{\sim}
& 
{u ' _+ u ^{!}  u _+ (\FF)  .} 
}
\end{equation}
La commutativité du triangle résulte des propriétés des morphismes d'adjonction. 
Le morphisme composé du haut est par définition 
le morphisme $\E'\to \R \underline{\Gamma} ^\dag _X\circ   f ^! \circ f _+  (\E') $.
Celui-ci est donc un isomorphisme.

iii) Via i) et ii), on a donc établit que 
les foncteurs 
$\R \underline{\Gamma} ^\dag _X  f ^! $ 
et
$f _+$ entre les catégories
$D ^\mathrm{b} _\mathrm{coh}  (\PP, T, X/K)$
et 
$D ^\mathrm{b} _\mathrm{coh}  (\PP', T', X/K)$
sont quasi-inverses. 

2) Supposons que $f$ soit une immersion ouverte.  
Soient $\E \in D ^\mathrm{b} _\mathrm{coh}  (\PP, T, X/K)$,
$\E' \in D ^\mathrm{b} _\mathrm{coh}  (\PP', T', X/K)$.
Grâce à \ref{coh-PXTindtP}.\ref{coh-PXTindtP-i} qui donne l'acyclicité des 
foncteurs $\R \underline{\Gamma} ^\dag _X\circ   f ^!$ et $ f _+$, 
on obtient les isomorphismes canoniques $\R \underline{\Gamma} ^\dag _X\circ   f ^! (\E) \riso f ^{*} (\E)= f ^{-1} (\E)$ et 
$f _{+} (\E')\riso f _*(\E')$ (i.e., il est inutile de dériver le foncteur $f _*$).
Or, on dispose des morphismes d'adjonction :
$\E \to f _{*} f ^{-1} (\E)$ et $f ^{-1}f _{*} (\E') \to  \E'$. Il est immédiat que le morphisme 
$f ^{-1}f _{*} (\E') \to   \E'$ est un isomorphisme. 
Pour établir que les foncteur $f _{*}$ et $f ^{-1}$ induisent des équivalences quasi-inverses entre $D ^\mathrm{b} _\mathrm{coh}  (\PP, T, X/K)$
et $D ^\mathrm{b} _\mathrm{coh}  (\PP', T', X/K)$, 
il suffit de vérifier que 
le morphisme canonique $\E \to f _{*} f ^{-1} (\E)$ est un isomorphisme.
Comme cela est local en $\PP$, on peut donc supposer  
qu'il existe une immersion fermée de $\V$-schémas formels lisses de la forme
$u'\colon  \X \hookrightarrow \PP'$ relevant $X \hookrightarrow \PP'$. Posons $u:=f  \circ u '$.
D'après le théorème Berthelot-Kashiwara (voir \ref{Berthelot-Kashiwara}), il suffit d'établir que $\E  \to f _{*} f  ^{-1} (\E  )$ devient un isomorphisme après application du fonceur $u ^{ !}$. Comme c'est le cas après application de $f  ^{-1}$ et comme
$u  ^! \riso  u ^{\prime ! }  \circ  f ^{-1} $, on en déduit le résultat.

III) Comme les catégories des cas respectifs sont des sous-catégories pleines de celles du cas non respectif, avec de plus \ref{coh-PXTindtPbis}.\ref{coh-PXTindtP-iii}, les cas respectifs de \ref{coh-PXTindtPbis}.\ref{coh-PXTindtP-iv} résulte du cas non respectif. 
\end{proof}

\begin{conj}
\label{conj-Db(coh)}
Il est conjectural que le foncteur canonique 
$(F\text{-})D ^\mathrm{b}  (\mathrm{Coh} (\PP, T, X/K))\to 
(F\text{-})D ^\mathrm{b} _\mathrm{coh}  (\PP, T, X/K)$ 
soit une équivalence de catégories. 
Si telle était le cas, alors l'hypothèse que $f $ soit propre ou soit une immersion ouverte
dans \ref{coh-PXTindtPbis}.\ref{coh-PXTindtP-iv} deviendrait superflue.
Nous pourrions alors définir les catégories correspondantes de \ref{nota-Dsurcv-cpart}
en remplaçant l'hypothèse {\og proprement $d$-réalisable \fg} 
par {\og $d$-réalisable \fg}.

\end{conj}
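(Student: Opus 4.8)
Le plan est de ramener l'assertion \`a une comparaison de $\Ext$, puis d'exploiter le th\'eor\`eme de Berthelot-Kashiwara et enfin la structure en niveaux. Le foncteur de r\'ealisation envoie un complexe born\'e d'objets de $\mathrm{Coh} (X,\, \PP,\, T)$ sur un complexe de $D ^\mathrm{b} _\mathrm{coh} (\D ^\dag _\PP (\hdag T) _\Q)$ \`a support dans $X$. Ce foncteur est exact et, comme tout objet de $D ^\mathrm{b} _\mathrm{coh}  (\PP, T, X/K)$ se reconstruit par un nombre fini de triangles de troncation \`a partir de ses faisceaux de cohomologie — lesquels appartiennent \`a $\mathrm{Coh} (X,\, \PP,\, T)$ —, son image essentielle est une sous-cat\'egorie triangul\'ee contenant $\mathrm{Coh} (X,\, \PP,\, T)$. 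L'essentielle surjectivit\'e r\'esultera donc de la pleine fid\'elit\'e, par d\'evissage sur l'amplitude (les classes d'obstruction vivant dans des $\Ext ^2$ que la pleine fid\'elit\'e permet pr\'ecis\'ement de r\'ealiser dans $\mathrm{Coh}$). D'apr\`es le crit\`ere classique de pleine fid\'elit\'e d'un foncteur de r\'ealisation, tout se ram\`ene \`a v\'erifier que, pour tous $\E, \FF \in \mathrm{Coh} (X,\, \PP,\, T)$ et tout $i \geq 0$, l'application canonique
\[ \Ext ^i _{\mathrm{Coh} (X,\, \PP,\, T)} (\E, \FF) \longrightarrow \mathrm{Hom} _{D ^\mathrm{b} _\mathrm{coh} (\D ^\dag _\PP (\hdag T) _\Q)} (\E, \FF [i]) \]
est un isomorphisme.

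La question \'etant locale sur $\PP$, on peut supposer $\PP$ affine et, comme dans la preuve de \ref{lemme-coh-PXTindtP}, $X$ int\`egre et $T\cap X$ un diviseur de $X$ (sinon les cat\'egories sont nulles). Relevons $X \hookrightarrow P$ en une immersion ferm\'ee $u\,:\,\X \hookrightarrow \PP$ de $\V$-sch\'emas formels lisses. Le th\'eor\`eme de Berthelot-Kashiwara (\ref{Berthelot-Kashiwara}) fournit une \'equivalence de cat\'egories ab\'eliennes $\mathrm{Coh} (X,\, \PP,\, T) \cong \mathrm{Coh} (X,\, \X,\, T\cap X)$, donc une \'equivalence $D ^\mathrm{b} (\mathrm{Coh} (X,\, \PP,\, T)) \cong D ^\mathrm{b} (\mathrm{Coh} (X,\, \X,\, T\cap X))$, ainsi qu'une \'equivalence $D ^\mathrm{b} _\mathrm{coh} (\PP, T, X/K) \cong D ^\mathrm{b} _\mathrm{coh} (\X, T\cap X, X/K)$, toutes induites par $u_+$ et donc compatibles au foncteur de r\'ealisation. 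Comme $u_+$ est exact (premier point de \ref{Berthelot-Kashiwara}) et pleinement fid\`ele sur ces sous-cat\'egories, il pr\'eserve les $\Ext$. Puisque sur $\X$ tout module coh\'erent est automatiquement \`a support dans $X$, on est ramen\'e \`a l'\'enonc\'e \emph{absolu lisse} : le foncteur $D ^\mathrm{b} (\mathrm{Coh} (\D ^\dag _\X (\hdag D) _\Q)) \to D ^\mathrm{b} _\mathrm{coh} (\D ^\dag _\X (\hdag D) _\Q)$ est une \'equivalence, o\`u $D := T\cap X$ est un diviseur de $X$ et $\X$ est lisse.

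Pour cet \'enonc\'e absolu, on passe au niveau fini. D'apr\`es \cite{Be1} (voir aussi \ref{etsurQ?}.\ref{loc-recol-comm-niveauxlim}), tout $\D ^\dag _\X (\hdag D) _\Q$-module coh\'erent $\E$ est le changement de base exact $\D ^\dag _\X (\hdag D) _\Q \otimes _{\widehat{\D} ^{(m)} _\X (\hdag D) _\Q} \E ^{(m)}$ d'un module coh\'erent $\E ^{(m)}$ \`a un niveau fini $m$, et $\D ^\dag _\X (\hdag D) _\Q$ est plat sur chaque $\widehat{\D} ^{(m)} _\X (\hdag D) _\Q$. On en d\'eduit
\[ \Ext ^i _{\D ^\dag _\X (\hdag D) _\Q} (\E, \FF) \riso \varinjlim _m \Ext ^i _{\widehat{\D} ^{(m)} _\X (\hdag D) _\Q} (\E ^{(m)}, \FF ^{(m)}). \]
Or, \`a niveau fini, le faisceau d'anneaux $\widehat{\D} ^{(m)} _\X (\hdag D) _\Q$ est coh\'erent et noeth\'erien sur les ouverts affines, de sorte que le th\'eor\`eme classique de r\'ealisation dans le cas noeth\'erien donne $D ^\mathrm{b} (\mathrm{Coh}) \cong D ^\mathrm{b} _\mathrm{coh}$ \`a ce niveau, d'o\`u la comparaison des $\Ext$ au niveau $m$. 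Le passage \`a la limite inductive sur $m$ donnerait alors la comparaison cherch\'ee au niveau $\D ^\dag$.

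Le point d\'elicat — et la raison pour laquelle l'\'enonc\'e reste conjectural — est pr\'ecis\'ement ce passage \`a la limite sur $m$, non plus au niveau des modules et des $\Ext$ individuels, mais au niveau des complexes. On sait descendre chaque module de cohomologie et calculer chaque $\Ext$ comme colimite filtr\'ee de niveaux finis ; mais pour obtenir une \'equivalence de cat\'egories d\'eriv\'ees il faut relever un complexe born\'e tout entier, avec ses donn\'ees de recollement, \`a un \emph{unique} niveau fini $m$, uniform\'ement, et contr\^oler la colimite des $\Ext$ le long de tout le complexe. Comme $\D ^\dag _\X (\hdag D) _\Q$ n'est pas noeth\'erien, rien ne garantit a priori une telle descente uniforme : c'est exactement l\`a que se situe la lacune. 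Une preuve compl\`ete demanderait soit une construction directe de r\'esolutions coh\'erentes finies des $\D ^\dag$-modules \`a niveau born\'e, soit un apport de finitude (par exemple via le formalisme de la surholonomie) assez fort pour forcer $D ^\mathrm{b} _\mathrm{coh} (\D ^\dag _{\X,\Q})$ \`a \^etre la $2$-colimite des $D ^\mathrm{b} _\mathrm{coh} (\widehat{\D} ^{(m)} _{\X,\Q})$. Le cas avec structure de Frobenius se d\'eduirait alors du cas sans, tous les foncteurs en jeu \'etant compatibles \`a $F$.
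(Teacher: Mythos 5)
L'\'enonc\'e que vous traitez est une \emph{conjecture} : le papier n'en donne aucune d\'emonstration et se contente d'en tirer des cons\'equences conditionnelles (suppression de l'hypoth\`ese de propret\'e ou d'immersion ouverte dans \ref{coh-PXTindtP}.\ref{coh-PXTindtP-iv}, affaiblissement de {\og proprement $d$-plongeable\fg} en {\og $d$-plongeable\fg}). Il n'y a donc pas de preuve du papier \`a laquelle comparer votre texte ; et de fait vous ne pr\'etendez pas d\'emontrer l'\'enonc\'e, puisque vous identifiez vous-m\^eme le point qui reste ouvert, \`a savoir la descente uniforme d'un complexe born\'e tout entier (avec ses morphismes) \`a un \emph{unique} niveau fini $m$, faute de noeth\'erianit\'e de $\D ^\dag _{\X} (\hdag D) _\Q$. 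Votre sch\'ema de r\'eduction --- crit\`ere de pleine fid\'elit\'e du foncteur de r\'ealisation via la comparaison des $\Ext$, r\'eduction au cas absolu lisse par le th\'eor\`eme de Berthelot--Kashiwara \ref{Berthelot-Kashiwara}, passage au niveau fini --- est coh\'erent avec les techniques du papier et situe raisonnablement la difficult\'e.

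Deux r\'eserves sur les \'etapes que vous pr\'esentez comme acquises. D'une part, la comparaison des $\Ext$ (Yoneda dans la cat\'egorie ab\'elienne $\mathrm{Coh} (X,\, \PP,\, T)$ contre $\mathrm{Hom}$ dans la cat\'egorie d\'eriv\'ee de faisceaux) n'est pas une question locale sur $\PP$ : les deux membres sont des invariants globaux, et il faudrait un argument de recollement (suite spectrale locale--globale ou hyper-recouvrement) avant de pouvoir supposer $\PP$ affine et de relever $X$ en $\X$. D'autre part, la formule $\Ext ^i _{\D ^\dag _\X (\hdag D) _\Q} (\E, \FF) \riso \varinjlim _m \Ext ^i _{\widehat{\D} ^{(m)} _\X (\hdag D) _\Q} (\E ^{(m)}, \FF ^{(m)})$ pour $i>0$ n'est pas une cons\'equence formelle de la platitude et de la descente des modules \`a niveau fini ; elle demanderait une justification ind\'ependante et rel\`eve en r\'ealit\'e du m\^eme probl\`eme de contr\^ole uniforme des niveaux que celui que vous signalez. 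Ces r\'eserves ne changent toutefois rien au verdict : votre conclusion --- l'\'enonc\'e reste conjectural --- co\"{\i}ncide avec le statut que le papier lui donne.
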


\subsection{Isocristaux surcohérents: descente au cas de la compactification partielle lisse}

Le lemme \ref{casliss631dev} ci-dessous est une légère extension du lemme \cite[5.3.1.2]{caro-pleine-fidelite}.
Ce lemme \ref{casliss631dev} sera utile afin d'obtenir le théorème le théorème \ref{surhol-conjA}.
Ce dernier nous permettra ensuite d'établir le théorème \ref{b+proprelisse-varlisse-gen}.

\begin{lemm}
\label{casliss631dev}
  Soient $f\colon \PP' \rightarrow \PP $ un morphisme de $\V$-schémas formels séparés et lisses,
  $u ' \colon X'\hookrightarrow P'$ une immersion fermée avec $X'$ intègre, 
$T'$ un diviseur de $P'$ ne contenant pas $X'$ tel que $Y':= X' \setminus T'$ soit lisse.
  Supposons $f  \circ u' $ propre.
Soit $\E' \in (F\text{-})D ^\mathrm{b} _\mathrm{isoc} (\PP', T', X'/K)$.

\begin{enumerate}
\item \label{casliss631dev1} 
Soit $D$ est un diviseur de $P$ tel que 
$D':=f ^{-1}(D)$ soit un diviseur de $P'$ inclus dans $T'$. 
Si $\E' $ est en outre $\D ^\dag _{\PP'} (\hdag D') _{\Q}$-surcohérent alors 
$f _+ (\E') \in (F\text{-})D ^\mathrm{b} _\mathrm{surcoh} (\D ^\dag _{\PP} (\hdag D) _{\Q})$.

\item \label{casliss631dev2}
Si $\E' \in (F\text{-})D ^\mathrm{b} _\mathrm{surhol} (\D ^\dag _{\PP',\Q})$ (e.g. si $T'$ est vide d'après la remarque \ref{rema-cv-surhol})
alors 
$f _+ (\E') \in (F\text{-})D ^\mathrm{b} _\mathrm{surhol} (\D ^\dag _{\PP,\Q})$.

\end{enumerate}

\end{lemm}

\begin{proof}
Commençons par quelques éléments valables dans les deux cas. 
D'après \cite[5.3.1]{caro-pleine-fidelite}, il résulte du théorème de désingularisation de de Jong
qu'il existe un diagramme commutatif de la forme
  \begin{equation}
  \label{diag-casliss631dev}
  \xymatrix {
  {X''} \ar[r] ^-{u''} \ar[d] _{a '}  &
  {\P ^N _{\PP'}} \ar[r] ^{\P ^N _f} \ar[d] ^-{q'} & {\P ^N _{\PP}} \ar[d]^-{q} \\
  {X'} \ar[r] ^-{u'}  & {\PP'} \ar[r] ^-f & {\PP,}
   }
\end{equation}
  où $X''$ est lisse sur $k$, $q$ et $q'$ sont les projections canoniques, $u''$ est une immersion fermée, 
  $a  ^{\prime -1} (T'\cap X')$ est un diviseur à croisements normaux strict de $X''$,
  $a '$ est propre, surjectif, génériquement fini et étale. 
  Posons $T'':= q ^{\prime -1} (T')$, 
$\widetilde{\PP}:=\P ^N _{\PP}$.
Plus précisément, d'après la preuve de \cite[5.3.1]{caro-pleine-fidelite}, 
il existe une immersion $\iota\colon X''\hookrightarrow \P ^N$ tel que $u''=( \iota , u' \circ a)$.
On remarque alors que $\P ^N _f \circ u''= (\iota, f \circ u' \circ a)$. 
Comme $q$ et $f \circ u' \circ a$ sont propres, on en déduit que l'immersion $\P ^N _f \circ u''$ est fermée.

Traitons à présent le premier cas. Comme la surcohérence est préservée par les foncteurs $\mathcal{H} ^{l}$ avec $l$ un entier, 
quitte à utiliser la deuxième suite spectrale d'hypercohomologie du foncteur
$f _+$, on peut supposer que $\E'$ est un module.
D'après  \cite[5.3.1]{caro-pleine-fidelite}, 
si on pose
$\E'':= \R \underline{\Gamma} _{X''} ^\dag \circ q ^{\prime !} (\E')$
alors 
$\E''\in (F\text{-})\mathrm{Isoc} ^{\dag \dag} (\P ^N _{\PP'}, T'', X''/K)$ et 
$\E'$ est un facteur direct de $q ' _+(\E'')$.
Cela implique 
$f _+ (\E')$ est un facteur direct de
$f _+  \circ q '_+ (\E'')$.
On se ramène ainsi à étudier les propriétés de 
$f _+  \circ q '_+ (\E'')$.
Or, par transitivité de l'image directe 
$f _+  \circ q '_+ (\E'') \riso q _+ \circ \phi _+  (\E'')$, avec $\phi :=\P ^N _f$.
Posons $D'':= q ^{\prime -1} (D')$, $\widetilde{D}:=q ^{-1} (D)$.
Comme $q $ est propre, $q _+$ préserve la surcohérence.
D'après les préliminaires de la preuve, 
il suffit donc d'établir que $\phi _+  (\E'')$  
est $\D ^\dag _{\widetilde{\PP}} (\hdag \widetilde{D}) _{\Q}$-surcohérent.
Par préservation de la surcohérence par image inverse extraordinaire et foncteur cohomologique local à support strict dans un sous-schéma fermé,
on vérifie que $\E''$ est $\D ^\dag _{\P ^N _{\PP'}} (\hdag D'') _{\Q}$-surcohérent.
Comme $D'' \cap X''=\widetilde{D}\cap X''$, en posant $Y'':= X'' \setminus D''$, 
on dispose ainsi du morphisme de $d$-cadres
$( \phi, Id, Id ) \colon (\P ^N _{\PP'}, D'', X'', Y'') \to (\widetilde{\PP}, \widetilde{D},  X'', Y'') $.
Comme $X''$ est lisse, 
il résulte de \ref{coh-PXTindtP}.\ref{coh-PXTindtP-ii}
que $\phi _+  (\E'')$  
est $\D ^\dag _{\widetilde{\PP}} (\hdag \widetilde{D}) _{\Q}$-surcohérent (à support dans $X$). 
D'où le résultat.

Comme la surholonomie est en particulier 
préservée par image directe par un morphisme propre,
par image inverse extraordinaire et foncteur cohomologique local à support strict dans un sous-schéma fermé, 
on traite le second cas de manière identique.

\end{proof}

\subsection{Stabilité de la surcohérence et surholonomie, conjectures de Berthelot}

\begin{vide}
[Rappel]
  \label{surhol-espcoh}
 Soit $\PP$ un $\V$-schéma formel séparé et lisse.
 Soit $\E \in (F \text{-} )D ^\mathrm{b}  (\smash{\D} ^\dag _{\PP,\Q})$.
 D'après \cite{surcoh-hol}, 
 le complexe $\E$ appartient à $( F \text{-})D ^\mathrm{b} _\mathrm{surhol} (\smash{\D} ^\dag _{\PP,\Q})$ si et seulement si,
pour tout $j\in \Z$, $\mathcal{H} ^j (\E) $ est un $(F\text{-})\smash{\D} ^\dag _{\PP,\Q}$-module
  surholonome. Cela est encore valable en remplaçant la surholonomie par la surcohérence (e.g. voir la fin de \cite[3.1.1]{caro_surcoherent}).
 \end{vide}

\begin{theo}
\label{surhol-conjA}
  Soit $f\colon \PP' \rightarrow \PP$ un morphisme de $\V$-schémas formels séparés et lisses.

\begin{enumerate}
\item    Soit $D$ un diviseur de $P$ tel que $D ' := f ^{-1} (D)$ soit un diviseur de $P'$.  
Si $\E '\in (F\text{-})D ^\mathrm{b} _\mathrm{surcoh} (\smash{\D} ^\dag _{\PP'} (\hdag D') _{\Q})$ est à support propre sur $P$,
  alors $f _+ (\E' ) \in (F\text{-})D ^\mathrm{b} _\mathrm{surcoh} (\smash{\D} ^\dag _{\PP} (\hdag D) _{\Q})$.
\item Si $\E '\in (F\text{-})D ^\mathrm{b} _\mathrm{surhol} (\smash{\D} ^\dag _{\PP',\Q})$ est à support propre sur $P$,
  alors $f _+ (\E' ) \in (F\text{-})D ^\mathrm{b} _\mathrm{surhol} (\smash{\D} ^\dag _{\PP,\Q})$.
\end{enumerate}

\end{theo}

\begin{proof}
Vérifions le premier cas (resp. le second cas).
  Comme il s'agit de reprendre essentiellement la preuve de \cite[8.8]{caro_surholonome},
  nous omettons les vérifications déjà obtenues. Soit $X'$ le support de $\E'$.
Il s'agit de procéder par récurrence sur $\dim X'$.
Lorsque $\dim X' =0$, cela résulte du théorème de Berthelot-Kashiwara de \ref{Berthelot-Kashiwara}
(utilisé pour l'immersion fermée $X ' \hookrightarrow \PP'$ qui se relève).
En utilisant 
des triangles distingués de Mayer-Vietoris,
on se ramène au cas où $X'$ est intègre
(de manière identique au début de l'étape $3$ de la preuve de \cite[8.8]{caro_surholonome}).
De plus, comme la surcohérence (resp. la surholonomie d'après \ref{surhol-espcoh}) est préservée par les foncteurs $\mathcal{H} ^{l}$ avec $l$ un entier, 
quitte à utiliser la deuxième suite spectrale d'hypercohomologie du foncteur
$f _+$, on peut supposer que $\E'$ est un module.
D'après \cite[6.2.1]{caro-pleine-fidelite}, il existe alors un diviseur $T'\supset D'$ de $P'$
tel que $Y':= X'\setminus T'$ soit affine lisse et dense dans $X'$ et
$\E' (\hdag T') \in (F\text{-})\mathrm{Isoc} ^{\dag \dag} (\PP',T', X'/K)$.
Par stabilité de la surcohérence (resp. surholonomie) par le fonceur $(\hdag T')$,
$\E' (\hdag T')$ est
$\D ^\dag _{\PP'} (\hdag D') _{\Q}$-surcohérent (resp. surholonome).
D'après \ref{casliss631dev}, il en résulte alors que $f _+ (\E' (\hdag T'))$ est $\D ^\dag _{\PP} (\hdag D) _{\Q}$-surcohérent
(resp. surholonome).
Or, par hypothèse de récurrence,
$f _+ (\R \underline{\Gamma} ^\dag _{T'} (\E'))$ est $\D ^\dag _{\PP} (\hdag D) _{\Q}$-surcohérent
(resp. surholonome).
 On conclut alors en appliquant $f _+$ au triangle de localisation en $T'$ de $\E'$.

\end{proof}

\begin{rema}
\label{rema-surhol-conjA}
  Nous avions dans la preuve de \cite[8.8]{caro_surholonome} (que l'on a reprise pour établir \ref{surhol-conjA}) 
  d'abord traité le cas où le support est lisse. 
  Par contre dans la preuve de \ref{surhol-conjA}, nous avons éviter de traité ce cas.
  Cela a été possible car ce cas où le support est lisse n'a été utilisé (dans la preuve de \cite[8.8]{caro_surholonome})
que dans deux cas : lorsque $\dim X' =0$ ou lorsque
$\E'(\hdag T')$ est associé à un isocristal surconvergent sur $X' \setminus T'$.
\end{rema}

On obtient grâce à \ref{surhol-conjA} une légère amélioration de \cite[8.8]{caro_surholonome} : 

\begin{coro}
\label{conjB->conjA}
Soient $f\colon \PP'\to \PP$ un morphisme de $\V$-schémas formels séparés et lisses.
Considérons les deux conjectures suivantes de Berthelot (voir \cite[5.3.6]{Beintro2}) : 
\begin{enumerate}
\item[$A)$] 
\label{conjA} 
Si $\E' \in F \text{-}D ^\mathrm{b} _\mathrm{hol} (\smash{\D} ^\dag _{\PP',\Q})$ est à support propre sur $\PP$, alors 
$f_{+} (\E') \in F \text{-}D ^\mathrm{b} _\mathrm{hol} (\smash{\D} ^\dag _{\PP,\Q})$.

\item[$B)$] Si $\E \in F \text{-}D ^\mathrm{b} _\mathrm{hol} (\smash{\D} ^\dag _{\PP,\Q})$, alors 
$f ^{!}(\E) \in F \text{-}D ^\mathrm{b} _\mathrm{hol} (\smash{\D} ^\dag _{\PP',\Q})$.
\end{enumerate}
La conjecture $B)$ implique alors la conjecture $A)$.
\end{coro}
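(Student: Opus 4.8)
The plan is to observe that conjecture $A)$ is, word for word, the statement already established in Theorem \ref{surhol-conjA}: there the hypotheses are exactly that $f : \PP' \to \PP$ is a morphism of separated smooth formal $\V$-schemes and that $\E'$ is an $F$-surholonomic complex with support proper over $\PP$, and the conclusion is that $f_+(\E')$ is again $F$-surholonomic. Crucially, that theorem is proved unconditionally, i.e.\ \emph{without} invoking $B)$. Consequently $A)$ holds as a theorem, and the implication $B) \Rightarrow A)$ follows at once. The reason to record it as a corollary is the comparison with \cite[8.8]{caro_surholonome}: the same circle of ideas there yielded the implication only in a slightly more restricted form, whereas the present unconditional proof of $A)$ removes the hypothesis $B)$ altogether, which is the announced improvement.

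\textbf{Conceptual structure.} To explain why Berthelot's two conjectures are naturally paired, I would first recall that surholonomy is stable under the duality functor $\DD$ and, by \ref{Berthelot-Kashiwara}, under direct image by a closed immersion. Hence $B)$, asserting stability under $f^{!}$, also gives stability under the ordinary inverse image, since this functor is obtained from $f^{!}$ by conjugation with $\DD$ up to a shift. Factoring $f = p \circ \gamma$ through its graph $\gamma : \PP' \hookrightarrow \PP' \times \PP$ (a closed immersion, $\PP$ being separated) and the projection $p : \PP' \times \PP \to \PP$ reduces $A)$ to the case of $p$, and the properness of the support of $\E'$ over $\PP$ is precisely what makes the direct image along $p$ behave like a proper one, so that relative duality of the form $\DD_{\PP}\, p_+ \simeq p_+\, \DD_{\PP' \times \PP}$ is available on such objects. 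This formal skeleton clarifies the link between $A)$ and $B)$, but by itself it only relates $f_+$ to $f_+$ (via duality) and $f^{!}$ to $f^{+}$ (via duality); it does \emph{not} bridge inverse image and direct image, which is not a purely formal matter.

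\textbf{Main obstacle.} The genuine obstacle — passing from pullback-stability to pushforward-stability — is exactly what is settled by the geometric input already assembled for \ref{surhol-conjA}, and this is where the phrase ``thanks to \ref{surhol-conjA}'' does its work. The argument there proceeds by induction on the dimension of the support, reduces (via \ref{dev-etape1}) to the case where $\E'(\hdag T')$ lies in $F\text{-}\mathrm{Isoc}^{\dag \dag}(Y',X'/K)$ for a smooth dense open $Y'$, and then applies \ref{casliss631dev}.\ref{casliss631dev-iii)} — which rests on de Jong's alterations, reduction to a strict normal crossings divisor, and Kedlaya's full faithfulness theorem — to conclude that the direct image of such an $F$-isocrystal is surholonomic; the localisation triangle along $T'$ feeds the induction. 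Since this establishes $A)$ outright, the implication $B) \Rightarrow A)$ is immediate, and there is no further obstacle specific to the corollary itself: its entire content is the unconditional validity of $A)$ supplied by \ref{surhol-conjA}.
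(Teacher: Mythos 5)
Your reading of the statement — that conjecture $A)$ as printed is word for word Theorem \ref{surhol-conjA}, so that $A)$ holds unconditionally and the implication $B)\Rightarrow A)$ is vacuous — is formally unimpeachable for the literal text, but it should have been a warning sign rather than a proof strategy: a corollary asserting ``$B)$ implies $A)$'' where $A)$ is already an unconditional theorem of the same paper has no content, and your argument never uses the hypothesis $B)$ at any point. The paper's own two-line proof reveals the intended meaning. Berthelot's conjectures in \cite[5.3.6]{Beintro2} concern \emph{holonomy}, not surholonomy (the subscripts ``$\mathrm{surhol}$'' in the displayed statements of $A)$ and $B)$ are evidently a slip for the holonomic categories), and the actual content of the corollary is: if $f^!$ preserves $F$-holonomicity for every such $f$, then $f_+$ preserves $F$-holonomicity of complexes with proper support. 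The proof then has exactly one non-trivial step, which is the one your proposal is missing: conjecture $B)$ implies that the notions of holonomy and surholonomy coincide for $F$-complexes (this is the cited result, essentially because surholonomy is defined by requiring stability of holonomicity-type finiteness under iterated applications of $f^!$ and duality, so $B)$ collapses the two notions). Once holonomy equals surholonomy, Theorem \ref{surhol-conjA} — stability of surholonomy under $f_+$ for proper support — transfers verbatim to holonomy, giving $A)$.

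So the gap is concrete: you need the bridge ``$B)\Rightarrow(\text{hol}=\text{surhol})$'' to make the corollary say anything, and without it your argument proves only the degenerate reading. Your middle paragraph on duality and the graph factorisation is not wrong in spirit, but it is not used anywhere and does not substitute for this bridge; in particular the claim that ordinary inverse image is obtained from $f^!$ by conjugation with $\DD$ requires coherence and biduality hypotheses that you would have to justify, and in any case it relates $f^!$ to $f^+$, not to $f_+$, as you yourself note. I would rewrite the proof as the paper does: state that under $B)$ the $F$-holonomic and $F$-surholonomic categories coincide, then apply \ref{surhol-conjA}.
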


\begin{proof}
D'après \cite[8.3]{caro_surholonome}, la conjecture $B)$ implique que les notions d'holonomie et de surholonomie (avec structure de Frobenius) coïncident. 
Il suffit alors d'appliquer \ref{surhol-conjA}.
\end{proof}

\section{Préservation de la surconvergence}

\subsection{\label{section-analog-m}Recollement de niveau $m$ sur les $k$-variétés lisses réalisables}

Nous reprenons dans cette section les constructions de 
\ref{defindonnederecoldag} et \ref{prop-donnederecol-dag}
en les adaptant au cas des $\D$-modules arithmétiques de niveau $m$ (et en oubliant les diviseurs).
Toutes les constructions et tous les résultats de cette section restent valable avec des structures de Frobenius. 
\\

Nous garderons de plus les notations suivantes :
  \label{notat-construc}
 on se donne $\U$ un $\V$-schéma formel séparé et lisse,
 $Y$ un sous-schéma fermé lisse de $U$
 et $v \colon Y \hookrightarrow U$ l'inclusion canonique.
On fixe $(\U _{\alpha}) _{\alpha \in \Lambda}$ un recouvrement d'ouverts de $\U$.
On note alors $\U _{\alpha \beta}:= \U _\alpha \cap \U _\beta$,
$\U _{\alpha \beta \gamma}:= \U _\alpha \cap \U _\beta \cap \U _\gamma$,
$Y _\alpha := Y \cap U _\alpha$,
$Y_{\alpha \beta } := Y _\alpha \cap Y _\beta$ et
$Y_{\alpha \beta \gamma } := Y _\alpha \cap Y _\beta \cap Y _\gamma $.
On suppose de plus que pour tout $\alpha\in \Lambda$, $Y _\alpha$ soit affine.
Comme $U$ est  séparé, pour tous $\alpha,\beta ,\gamma \in \Lambda$,
$Y_{\alpha \beta }$ et $Y_{\alpha \beta \gamma }$ sont alors affines.

Pour tout triplet $(\alpha, \, \beta,\, \gamma)\in \Lambda ^3$, choisissons
$\Y _\alpha$ (resp. $\Y _{\alpha \beta}$, $\Y _{\alpha \beta \gamma}$)
des $\V$-schémas formels lisses relevant $Y _\alpha$
(resp. $Y _{\alpha \beta}$, $Y _{\alpha \beta \gamma}$).
Soient
$p _1 ^{\alpha \beta}\colon \Y  _{\alpha \beta} \rightarrow \Y _{\alpha}$
(resp. $p _2 ^{\alpha \beta}\colon \Y  _{\alpha \beta} \rightarrow \Y _{\beta}$)
des relèvements de
$Y  _{\alpha \beta} \rightarrow Y _{\alpha}$
(resp. $Y  _{\alpha \beta} \rightarrow Y _{\beta}$).
De même, pour tout triplet $(\alpha,\,\beta,\,\gamma )\in \Lambda ^3$, on choisit des relèvements
$p _{12} ^{\alpha \beta \gamma}\colon \Y  _{\alpha \beta \gamma} \rightarrow \Y  _{\alpha \beta} $,
$p _{23} ^{\alpha \beta \gamma}\colon \Y  _{\alpha \beta \gamma} \rightarrow \Y  _{\beta \gamma} $,
$p _{13} ^{\alpha \beta \gamma}\colon \Y  _{\alpha \beta \gamma} \rightarrow \Y  _{\alpha \gamma} $,
$p _1 ^{\alpha \beta \gamma}\colon \Y  _{\alpha \beta \gamma} \rightarrow \Y  _{\alpha} $,
$p _2 ^{\alpha \beta \gamma}\colon \Y  _{\alpha \beta \gamma} \rightarrow \Y  _{\beta} $,
$p _3 ^{\alpha \beta \gamma}\colon \Y  _{\alpha \beta \gamma} \rightarrow \Y  _{\gamma} $,
$v _{\alpha}\colon \Y _{\alpha } \hookrightarrow \U _{\alpha }$,
$v _{\alpha \beta}\colon \Y _{\alpha \beta} \hookrightarrow \U _{\alpha \beta}$
et
$v _{\alpha \beta \gamma}\colon \Y _{\alpha \beta \gamma } \hookrightarrow \U _{\alpha \beta \gamma}$
induisant les morphismes canoniques au niveau des fibres spéciales.
Tous ces relèvements seront supposés fixés par la suite.

\begin{defi}\label{defindonnederecol}
Pour tout $\alpha \in \Lambda$, donnons-nous $\E _\alpha$,
un $\smash{\widehat{\D}} ^{(m)} _{\Y _{\alpha} }$-module cohérent.
On appelle \textit{donnée de recollement} sur $(\E _{\alpha})_{\alpha \in \Lambda}$,
la donnée pour tous $\alpha,\,\beta \in \Lambda$ d'un isomorphisme
$\smash{\widehat{\D}} ^{(m)} _{\Y _{\alpha \beta}  }$-linéaire
$ \theta _{  \alpha \beta} \colon   p _2  ^{\alpha \beta !} (\E _{\beta}) \riso p  _1 ^{\alpha \beta !} (\E _{\alpha}),$
ceux-ci vérifiant la condition de cocycle :
$\theta _{13} ^{\alpha \beta \gamma }=
\theta _{12} ^{\alpha \beta \gamma }
\circ
\theta _{23} ^{\alpha \beta \gamma }$,
où $\theta _{12} ^{\alpha \beta \gamma }$, $\theta _{23} ^{\alpha \beta \gamma }$
et $\theta _{13} ^{\alpha \beta \gamma }$ sont définis par les diagrammes commutatifs analogues à \ref{diag1-defindonnederecol}. 
\end{defi}

\begin{defi}
\label{defi-coh-m-alpah}

a) On construit la catégorie $\mathrm{Coh} ^{(m)} (Y,\, (\Y _\alpha) _{\alpha \in \Lambda}/\V)$
de la manière suivante :

\begin{itemize}
  \item  un objet est une famille $(\E _\alpha) _{\alpha \in \Lambda}$
de $\smash{\widehat{\D}} ^{(m)} _{\Y _{\alpha} }$-modules cohérents $\E _\alpha$ 
sans $p$-torsion
munie d'une donnée de recollement $ (\theta _{\alpha\beta}) _{\alpha ,\beta \in \Lambda}$,

\item  un morphisme
$((\E _{\alpha})_{\alpha \in \Lambda},\, (\theta _{\alpha\beta}) _{\alpha ,\beta \in \Lambda})
\rightarrow
((\E ' _{\alpha})_{\alpha \in \Lambda},\, (\theta '_{\alpha\beta}) _{\alpha ,\beta \in \Lambda})$
est une famille de morphismes $f _\alpha\colon \E _\alpha \rightarrow \E '_\alpha$
commutant aux données de recollement, i.e., telle que le diagramme analogue à \ref{diag2-defindonnederecol} soit commutatif.

\end{itemize}

b) On désigne par $\mathrm{Cris} ^{(m)} (Y,\, (\Y _\alpha) _{\alpha \in \Lambda}/\V)$ la sous-catégorie pleine de $\mathrm{Coh} ^{(m)} (Y,\, (\Y _\alpha) _{\alpha \in \Lambda}/\V)$
des familles $(\E _\alpha) _{\alpha \in \Lambda}$
de $\smash{\widehat{\D}} ^{(m)} _{\Y _{\alpha} }$-modules cohérents topologiquement quasi-nilpotents $\E _\alpha$
sans $p$-torsion, 
cohérents sur $\O _{\Y _{\alpha} }$.
\end{defi}

\begin{defi}
\label{defi-coh-m-U}
On désigne par $\mathrm{Coh} ^{(m)} (\U,Y/\V)$ (resp. $\mathrm{Cris} ^{(m)} (\U,Y/\V)$) la catégorie des
$\smash{\widehat{\D}} ^{(m)} _{\U}$-modules cohérents $\FF$ 
sans $p$-torsion 
à support dans $Y$ tels que,
pour tout $\alpha \in \Lambda$,
il existe un $\smash{\widehat{\D}} ^{(m)} _{\Y _\alpha}$-module cohérent $\E _\alpha$
(resp. un $\smash{\widehat{\D}} ^{(m)} _{\Y _{\alpha} }$-module cohérent topologiquement quasi-nilpotent $\E _\alpha$
sans $p$-torsion, 
cohérent sur $\O _{\Y _{\alpha} }$)
et un isomorphisme $\smash{\widehat{\D}} ^{(m)} _{\U _\alpha}$-linéaire :
$v_{\alpha+} (\E _\alpha) \riso \FF |\U _\alpha$.
\end{defi}

\begin{rema}
\label{CohUYV-local}
Les deux catégories 
$\mathrm{Coh} ^{(m)} (\U,Y/\V)$ et
 $\mathrm{Cris} ^{(m)} (\U,Y/\V)$
 définies dans \ref{defi-coh-m-U} ne dépendent pas du choix du recouvrement ouvert
$(\U _{\alpha}) _{\alpha \in \Lambda}$ de $\U$ tel que $Y \cap U _\alpha \hookrightarrow \U _\alpha$ se relève, ni de ses relèvements.
En effet, cela résulte du fait que le foncteur image directe est canoniquement indépendant des relèvements (voir \ref{defi-f+-f+(m)}).
De plus, la vérification qu'un $\smash{\widehat{\D}} ^{(m)} _{\U}$-module cohérent à support dans $Y$
appartienne à l'une de ces catégories est locale en $\U$ et se ramène à supposer $\U$ affine et en particulier que $Y\hookrightarrow \U$ se relève (on peut alors choisir le recouvrement avec $\Lambda$ de cardinal $1$ de $\U$).
\end{rema}

\begin{rema}
On dispose d'une équivalence canonique 
entre la catégorie des $m$-cristaux sur $Y$ cohérents sur $\O _{Y/\S}$ et celle des 
$\widehat{\D} ^{(m)} _{\Y}$-modules cohérents  
topologiquement quasi-nilpotents et $\O _{\Y}$-cohérents 
(voir par exemple \cite[3.1]{E-LS2}). Cela justifie la notation $\mathrm{Cris} ^{(m)} (Y,\, (\Y _\alpha) _{\alpha \in \Lambda}/\V)$ de \ref{defi-coh-m-U}.

\end{rema}

\begin{lemm}
\label{F=u!u+F}
Soient $u\colon\Y \hookrightarrow \U$ une immersion fermée de $\V$-schémas formels lisses
et $u _{i}\colon Y _i \hookrightarrow U _i$ l'immersion fermée de $\V/\pi ^{i+1} \V$-schémas lisses induit. 
Soit $\FF$ un $\smash{\widehat{\D}} ^{(m)} _{\Y}$-module cohérent sans $p$-torsion.
On dispose alors de l'isomorphisme canonique
\begin{equation}
\label{F=u!u+F-iso}
\FF 
\riso
\mathcal{H} ^{0} u ^{!} \circ u _{+} (\FF).
\end{equation}

\end{lemm}

\begin{proof}
 Notons $\FF _i := \smash{\D} ^{(m)} _{Y_i} \otimes ^{\L} _{\smash{\widehat{\D}} ^{(m)} _{\Y}}  \FF \riso \FF / \pi ^{i+1}\FF$. 
 On dispose de l'isomorphisme
 $\smash{\D} ^{(m)} _{U_i} \otimes ^{\L} _{\smash{\widehat{\D}} ^{(m)} _{\U}} 
 u _{+} (\FF) 
 \riso
 u _{i+} (\FF _i)$
 (cela découle par exemple de l'isomorphisme \cite[3.5.1.2]{Beintro2}, ou alors on procède comme pour la vérification du théorème de changement de base de l'image directe de \cite[2.4.2]{Beintro2}).
 De plus, d'après \cite[3.5.1.2]{Beintro2}, on dispose de l'isomorphisme canonique 
 $u _{+} (\FF ) \riso \underset{\underset{i}{\longleftarrow}}{\lim\,}  u _{i +} (\FF_i ) $. 
Posons, $\E :=  u _{+} (\FF) $ 
et
$\E _i := \smash{\D} ^{(m)} _{U_i} \otimes ^{\L} _{\smash{\widehat{\D}} ^{(m)} _{\U}} 
 u _{+} (\FF) $.  
D'après \cite[1.2.5]{surcoh-hol}, on dispose de plus de l'isomorphisme canonique
$\mathcal{H} ^0 u ^{!} ( {\E}) 
\riso
\underset{\underset{i}{\longleftarrow}}{\lim\,} 
\mathcal{H} ^0 u _i ^! ({\E} _i) $.
Or, d'après \cite[1.2.5]{surcoh-hol}, on dispose du monomorphisme canonique 
$\FF _i  
\hookrightarrow
\mathcal{H} ^{0} u _i ^{!} \circ u _{i +} (\FF_i ).$
En passant à la limite projective, on obtient d'après ce qui précède
le morphisme canonique
$\FF 
\to
\mathcal{H} ^{0} u ^{!} \circ u _{+} (\FF)$. 
Soit $\Y'$ un ouvert affine de $\Y$.
Il résulte de \cite[2.3.1]{surcoh-hol} que ce morphisme 
induit l'isomorphisme
$\FF (\Y' ) \riso \mathcal{H} ^{0} u ^{!} \circ u _{+} (\FF) (\Y')$.
D'où le résultat. 
\end{proof}

\begin{rema}
\label{rema-F=u!u+F}
Avec les notations \ref{F=u!u+F},
on prendra garde au fait que l'analogue arithmétique à niveau $m$ constant du théorème de Kashiwara est inexact. Plus précisément, 
un $\smash{\widehat{\D}} ^{(m)} _{\U}$-module cohérent sans $p$-torsion à support dans $Y$
n'est pas forcément dans l'image essentielle du foncteur $v ^{(m)} _{+}\colon  D ^\mathrm{b} _\mathrm{coh} (\smash{\widehat{\D}} ^{(m)} _{\Y})
\to D ^\mathrm{b} _\mathrm{coh} (\smash{\widehat{\D}} ^{(m)} _{\U})$. Cette remarque explique 
pourquoi dans la définition des catégories 
$\mathrm{Coh} ^{(m)} (\U,Y/\V)$ (resp. $\mathrm{Cris} ^{(m)} (\U,Y/\V)$) de \ref{defi-coh-m-U} 
on fait l'hypothèse que
$\E |\U _\alpha $ soit dans l'image essentielle de $v _{\alpha +}$
afin d'obtenir les équivalences de catégories de \ref{eqcatmVcoh}.

\end{rema}

\begin{lemm}
\label{lemmsstorsion}
  Soit $v\colon \Y \hookrightarrow \U$ une immersion fermée de $\V$-schémas formels lisses.
Soit $\E$ un $\smash{\widehat{\D}} ^{(m)} _{\Y}$-module cohérent sans $p$-torsion.
  Alors $ v_+ (\E)$ est un $\smash{\widehat{\D}} ^{(m)} _{\U}$-module cohérent sans $p$-torsion.
\end{lemm}

\begin{proof}
L'assertion étant locale, on se ramène au cas où
$\U$ est muni de coordonnées locales $t _1, \dots, t _n$ telles
  que $t _{r+1}, \dots, t _n$ engendrent l'idéal définissant $v$.
Par préservation de la cohérence par l'image directe d'un morphisme propre, on obtient que
$ v_+ (\E)$ est un $\smash{\widehat{\D}} ^{(m)} _{\U}$-module cohérent.
Il reste à établir que
$ v_+ (\E)=
v _* (\smash{\widehat{\D}} ^{(m)} _{\U\leftarrow \Y} \otimes _{\smash{\widehat{\D}} ^{(m)} _{\Y}} \E)$
est sans $p$-torsion.
Pour tout entier $i$, notons $U _{i}$ et $Y _{i}$ les réductions de $\U$ et $\Y$ modulo $\pi ^{i+1}$.
Or, comme pour tout entier $i$ le $\D ^{(m)} _{Y _i}$-module à droite
$\D ^{(m)} _{U _i \leftarrow Y _i}$ est plat (car libre), on déduit de \cite[3.2.4]{Be1} que
$\smash{\widehat{\D}} ^{(m)} _{\U\leftarrow \Y}$ est plat sur $\smash{\widehat{\D}} ^{(m)} _{\Y}$.
D'où le résultat.
\end{proof}

\begin{prop}
\label{cohUY-eqcatrel}
Supposons qu'il existe 
$u \colon \Y\hookrightarrow \U$ 
un relèvement de $Y\hookrightarrow \U$.
Les foncteurs $u _+$ et $\mathcal{H} ^{0} u ^{!} $ induisent des équivalences quasi-inverses
entre la catégorie des $\smash{\widehat{\D}} ^{(m)} _{\Y}$-modules cohérents sans $p$-torsion
et $\mathrm{Coh} ^{(m)} (\U,Y/\V)$. 
\end{prop}

\begin{proof}
Par définition, avec \ref{CohUYV-local} et \ref{lemmsstorsion}, on sait déjà que le foncteur $u _+$ est bien défini et essentiellement surjectif. 
Soient $\E, \E'$ deux  $\smash{\widehat{\D}} ^{(m)} _{\Y}$-modules cohérents sans $p$-torsion
et $\phi\,:\, u _+(\E')  \to u _+(\E') $ un morphisme de $\mathrm{Coh} ^{(m)} (\U,Y/\V)$, i.e. un morphisme 
$\smash{\widehat{\D}} ^{(m)} _{\U}$-linéaire.
Considérons le diagramme canonique 
\begin{equation}
\notag
\xymatrix @C=1,8cm {
{u _{+} (\E')}
\ar[d] ^-{u _{+}(\psi)}
\ar[r] ^-{\sim} 
& 
{u _{+}\circ \mathcal{H} ^{0} u ^{!} \circ u _{+} (\E')}
\ar[d] ^-{u _{+}\circ \mathcal{H} ^{0} u ^{!} (\phi)}
\ar[r] 
& 
{u _{+} (\E')}
\ar[d] ^-{\phi}
\\ 
{ u _{+} (\E)} 
\ar[r] ^-{\sim} 
& 
{u _{+}\circ \mathcal{H} ^{0} u ^{!}\circ u _{+} (\E)} 
\ar[r] 
& 
{u _{+} (\E),} 
}
\end{equation}
dont les flèches horizontales de gauche sont les images par $u _+$ des isomorphismes de \ref{F=u!u+F} 
et dont les flèches horizontales de droite se construisent de manière analogue 
par passage à la limite projective à partir de \cite[1.2.6]{surcoh-hol} (au lieu de \cite[1.2.5]{surcoh-hol})
et dont le morphisme $\psi \colon \E' \to \E$ est l'unique morphisme induisant le carré commutatif de gauche sans $u _+$. 
Comme les morphismes composés horizontaux sont l'identité, on en déduit le résultat  (et il en résulte aussi que tous les morphismes horizontaux sont
des isomorphismes). 
\end{proof}

\begin{lemm}
\label{cohssi=cohi}
Pour tout entier $i$, notons $U _{i}$ la réduction de $\U$ modulo $\pi ^{i+1}$.
Soit $\E$ un $\smash{\D} ^{(m)} _{U _i}$-module.
Le module $\E$ est $\smash{\D} ^{(m)} _{U _i}$-cohérent 
si et seulement s'il est $\smash{\widehat{\D}} ^{(m)} _{\U}$-cohérent.
\end{lemm}

\begin{proof}
Le lemme étant locale en $\U$, on 
peut supposer que $\U$ est affine. Notons alors respectivement $E$, $\smash{D} ^{(m)} _{U _i}$,
$\smash{\widehat{D}} ^{(m)} _{\U}$ les sections globales de 
$\E$, $\smash{\D} ^{(m)} _{U _i}$,
$\smash{\widehat{\D}} ^{(m)} _{\U}$.
Comme $\smash{D} ^{(m)} _{U _i}$ est un $\smash{\widehat{D}} ^{(m)} _{\U}$-module de type fini,
d'après le théorème $A$ de Berthelot sur les $\smash{\widehat{\D}} ^{(m)} _{\U}$-cohérents (voir \cite[3]{Be1}),
le module $\G :=\smash{\widehat{\D}} ^{(m)} _{\U} \otimes _{\smash{\widehat{D}} ^{(m)} _{\U}} \smash{D} ^{(m)} _{U _i}$
est un $\smash{\widehat{\D}} ^{(m)} _{\U}$-module cohérent. 
Si $\U '$ est un ouvert affine de $\U$, 
comme 
$\smash{\widehat{\D}} ^{(m)} _{\U'} \otimes _{\smash{\widehat{D}} ^{(m)} _{\U'}} 
\smash{\widehat{D}} ^{(m)} _{\U '} \otimes _{\smash{\widehat{D}} ^{(m)} _{\U}} \smash{D} ^{(m)} _{U _i} 
\riso
\G |\U'
$,
d'après le théorème $A$ de Berthelot sur les $\smash{\widehat{\D}} ^{(m)} _{\U'}$-cohérents,
il en résulte que le morphisme canonique
 $\smash{\widehat{D}} ^{(m)} _{\U '} \otimes _{\smash{\widehat{D}} ^{(m)} _{\U}} \smash{D} ^{(m)} _{U _i} \to \G (\U')$
est un isomorphisme. Or $\smash{\widehat{D}} ^{(m)} _{\U '} \otimes _{\smash{\widehat{D}} ^{(m)} _{\U}} \smash{D} ^{(m)} _{U _i}$
est canoniquement isomorphe à $\smash{D} ^{(m)} _{U '_i}= \smash{\D} ^{(m)} _{U _i} (U ' _i)$, où $U' _i$ est l'ouvert affine de $U _i$ correspondant à $\U'$. 
On en déduit que le morphisme canonique 
$\smash{\widehat{\D}} ^{(m)} _{\U} \otimes _{\smash{\widehat{D}} ^{(m)} _{\U}} \smash{D} ^{(m)} _{U _i}
\to \smash{\D} ^{(m)} _{U _i}$
est un isomorphisme (en d'autres termes, on a établi que $ \smash{\D} ^{(m)} _{U _i}$ est $\smash{\widehat{\D}} ^{(m)} _{\U}$-cohérent). 
On dispose alors du diagramme commutatif: 
\begin{equation}
\notag
\xymatrix{
{\smash{\widehat{\D}} ^{(m)} _{\U} \otimes _{\smash{\widehat{D}} ^{(m)} _{\U}} E} 
\ar[r] ^-{\sim}
\ar[d] ^-{}
& 
{\smash{\widehat{\D}} ^{(m)} _{\U} \otimes _{\smash{\widehat{D}} ^{(m)} _{\U}} 
\smash{D} ^{(m)} _{U _i} \otimes _{\smash{D} ^{(m)} _{U _i}} E} 
\ar[r] ^-{\sim}
& 
{\smash{\D} ^{(m)} _{U _i} \otimes _{\smash{D} ^{(m)} _{U _i}} E} 
\ar[d] ^-{}
\\ 
{\E} 
\ar@{=}[rr]
&& {\E, } }
\end{equation}
dont la flèche du haut à droite est un isomorphisme d'après ce que l'on vient d'établir. 
La $\smash{\D} ^{(m)} _{U _i}$-cohérence (resp. $\smash{\widehat{\D}} ^{(m)} _{\U}$-cohérence) de 
$\E$ signifie d'après le théorème de type $A$ correspondant que la flèche de droite (resp. de gauche) est un isomorphisme. 
\end{proof}

\begin{lemm}
\label{u+UandUi}
Soient $u\colon\Y \hookrightarrow \U$ une immersion fermée de $\V$-schémas formels lisses
et $u _{i}\colon Y _i \hookrightarrow U _i$ l'immersion fermée de $\V/\pi ^{i+1} \V$-schémas lisses induit. 
\begin{enumerate}
\item Soit $\G$ un $\smash{\D} ^{(m)} _{Y _i}$-module cohérent. 
On dispose alors de l'isomorphisme canonique de $\smash{\D} ^{(m)} _{U _i}$-modules cohérents :
\begin{equation}
\notag
u _{+} (\G) := 
u _{*}(\smash{\widehat{\D}} ^{(m)} _{\U\hookleftarrow \Y} 
\otimes _{\smash{\widehat{\D}} ^{(m)} _{\Y}} \G )
\riso
u _{*}(\smash{\D} ^{(m)} _{U _i \hookleftarrow Y _i} 
\otimes _{\smash{\D} ^{(m)} _{Y _i}} \G )
=: u _{i+} (\G) .
\end{equation}

\item Soit $\E$ un $\smash{\D} ^{(m)} _{U _i}$-module cohérent. 
On dispose alors de l'isomorphisme canonique de complexes de $\smash{\D} ^{(m)} _{Y _i}$-modules :
\begin{equation}
\notag
u ^{!} (\E) := 
\smash{\widehat{\D}} ^{(m)} _{\Y \hookrightarrow \U} 
\otimes ^{\L} _{u ^{-1} \smash{\widehat{\D}} ^{(m)} _{\U}} u ^{-1}\E 
\riso
\smash{\D} ^{(m)} _{Y _i \hookrightarrow U_i} 
\otimes ^{\L} _{u ^{-1} \smash{\D} ^{(m)} _{U_i}} u ^{-1}\E 
=: u ^{!} _{i} (\E).
\end{equation}

\end{enumerate}

\end{lemm}

\begin{proof}
Comme $\smash{\widehat{\D}} ^{(m)} _{\U\hookleftarrow \Y} 
\otimes _{\smash{\widehat{\D}} ^{(m)} _{\Y}} 
 \smash{\D} ^{(m)} _{Y _i}
\riso
\smash{\D} ^{(m)} _{U _i \hookleftarrow Y _i} $,
 on dispose des isomorphismes: 
\begin{equation}
\notag
\smash{\widehat{\D}} ^{(m)} _{\U\hookleftarrow \Y} 
\otimes _{\smash{\widehat{\D}} ^{(m)} _{\Y}} \G 
\riso
\smash{\widehat{\D}} ^{(m)} _{\U\hookleftarrow \Y} 
\otimes _{\smash{\widehat{\D}} ^{(m)} _{\Y}} 
 \smash{\D} ^{(m)} _{Y _i}
\otimes _{\smash{\D} ^{(m)} _{Y _i}} \G 
\riso
\smash{\D} ^{(m)} _{U _i \hookleftarrow Y _i} 
\otimes _{\smash{\D} ^{(m)} _{Y _i}} \G .
\end{equation}
D'où la première assertion. 
De plus, comme $\smash{\widehat{\D}} ^{(m)} _{\Y \hookrightarrow \U} $ n'a pas de $\pi ^{i +1}$-torsion, 
via la résolution plate $\smash{\widehat{\D}} ^{(m)} _{\U} \overset{\pi ^{i +1}}{\longrightarrow}\smash{\widehat{\D}} ^{(m)} _{\U}$
de 
$\smash{\D} ^{(m)} _{U _i}$,
on obtient l'isomorphisme canonique
$ \smash{\widehat{\D}} ^{(m)} _{\Y \hookrightarrow \U} 
\otimes ^{\L} _{u ^{-1} \smash{\widehat{\D}} ^{(m)} _{\U}} u ^{-1} \smash{\D} ^{(m)} _{U _i}
\riso
\smash{\D} ^{(m)} _{Y _i \hookrightarrow U_i} $.
D'où
\begin{equation}
\notag
\smash{\widehat{\D}} ^{(m)} _{\Y \hookrightarrow \U} 
\otimes ^{\L} _{u ^{-1} \smash{\widehat{\D}} ^{(m)} _{\U}} u ^{-1}\E 
\riso
\smash{\widehat{\D}} ^{(m)} _{\Y \hookrightarrow \U} 
\otimes ^{\L} _{u ^{-1} \smash{\widehat{\D}} ^{(m)} _{\U}} u ^{-1} \smash{\D} ^{(m)} _{U _i}
\otimes ^{\L} _{u ^{-1} \smash{\D} ^{(m)} _{U_i}} u ^{-1}\E 
\riso
\smash{\D} ^{(m)} _{Y _i \hookrightarrow U_i} 
\otimes ^{\L} _{u ^{-1} \smash{\D} ^{(m)} _{U_i}} u ^{-1}\E .
\end{equation}

\end{proof}

\begin{prop}
\label{3.4.3Be1-gen}
Soient $\U = \U' \cup \U''$ un recouvrement ouvert de $\U$,
$Y' := Y \cap U'$ et $Y'' := Y \cap U''$.
Soient $\E' \in \mathrm{Coh} ^{(m)} (\U',Y'/\V)$
et $\E'' \in \mathrm{Coh} ^{(m)} (\U'',Y''/\V)$, 
$\epsilon \colon \E' _\Q | \U ' \cap \U'' \riso \E'' _\Q | \U ' \cap \U''$ un isomorphisme
$\smash{\widehat{\D}} ^{(m)} _{\U' \cap \U'', \Q }$-linéaire.
Il existe alors $\E \in \mathrm{Coh} ^{(m)} (\U,Y/\V)$ 
prolongeant $\E'$ sur $\U'$ et une inclusion
$\E |\U'' \hookrightarrow p ^{-n}\E''$ pour un certain entier $n\geq 0$ tel que 
le morphisme induit 
$\E _\Q |\U'' \to \E'' _\Q$ soit un isomorphisme prolongeant $\epsilon$.
\end{prop}

\begin{proof}
La preuve est analogue à celle \cite[3.4.3]{Be1}.
Quitte à faire une récurrence sur le nombre d'ouverts affines recouvrant $\U''$, on peut supposer $\U''$ affine. 
Il existe alors $u ^{\prime \prime}\colon \Y'' \hookrightarrow \U''$ un relèvement de $Y'' \hookrightarrow \U''$.
On note $v ^{\prime \prime}\colon \Y'' \cap \U' \hookrightarrow \U''\cap \U'$ l'immersion fermée induite par $u''$.
L'immersion fermée $v ^{\prime \prime}$ est ainsi un relèvement de $Y'' \cap Y' \hookrightarrow \U'' \cap \U'$.
Quitte à multiplier par une puissance de $p$, on peut supposer 
$\epsilon (\E' | \U ' \cap \U'') \subset \E''  | \U ' \cap \U''$.

D'après \ref{cohUY-eqcatrel}, 
il existe un $\smash{\widehat{\D}} ^{(m)} _{\Y'' \cap \U'}$-module cohérent $\FF'$ sans $p$-torsion
et un isomorphisme 
$\iota ' \colon \E' | \U ' \cap \U'' \riso v ^{\prime \prime} _+ (\FF') $. 
D'après \ref{cohUY-eqcatrel}, 
il existe un $\smash{\widehat{\D}} ^{(m)} _{\Y'' }$-module cohérent $\FF''$ sans $p$-torsion
et un isomorphisme 
$\iota '' \colon \E'' \riso u ^{\prime \prime} _+ (\FF'')$. 
D'après \ref{cohUY-eqcatrel}, il existe un morphisme
$\theta\colon \FF'  \to \FF '' |\Y'' \cap \U'$ tel que 
$ v ^{\prime \prime} _+ (\theta)$ rende commutatif le diagramme ci-dessous
\begin{equation}
\label{3.4.3Be1-gen-diag1}
\xymatrix@C=1,5cm{
{\E' |\U' \cap \U''} 
\ar[rr] ^-{\iota '} _-{\sim}
\ar@{^{(}->}[d] ^-{\epsilon}
&& 
{v ^{\prime \prime} _+ (\FF')}
\ar@{.>}[d] ^-{v ^{\prime \prime} _+ (\theta)}
\\ 
{\E'' |\U' \cap \U''} 
\ar[r] ^-{\iota ''|\U' \cap \U''} _-{\sim}  
& 
{ u ^{\prime \prime} _+ (\FF'')|\U' \cap \U''} 
\ar[r] ^-{\sim}  
&
{v ^{\prime \prime} _+ (\FF'' |\U' \cap \Y'').} 
}
\end{equation}
Comme $\epsilon _\Q$ est un isomorphisme, il en est de même de 
$\theta _\Q$. Il existe donc $i$ assez grand tel que $\pi ^{i+1}\FF''  | \U ' \cap \Y''
 \subset 
 \theta (\FF' )$.
 Notons $\FF '' _i := \FF'' / \pi ^{i+1}\FF''$.
D'après \ref{cohssi=cohi},
$\FF '' _i $ est un $\smash{\widehat{\D}} ^{(m)} _{\Y''}$-module cohérent. 
On note $t\colon \FF'' \to \FF '' _i$ la surjection canonique de $\smash{\widehat{\D}} ^{(m)} _{\Y''}$-modules cohérents.
 Soit $\overline{\FF'} _i $ l'image du morphisme 
 $ (t | \U' \cap \Y'' ) \circ \theta \colon \FF' \to \FF '' _i | \U ' \cap \Y'' $.
On remarque que comme $\pi ^{i+1}\FF''  | \U ' \cap \Y''
 \subset 
 \theta (\FF' )$, alors $(t | \U' \cap \Y'' ) ^{-1} (\overline{\FF'} _i ) =\theta (\FF' )$.
Comme $\overline{\FF'} _i $ est un $\smash{\D} ^{(m)} _{Y'' _i\cap U' _i}$-module cohérent, 
de manière analogue à la preuve de \cite[3.4.3]{Be1}, il existe alors $\G _i$ un 
sous-$\smash{\D} ^{(m)} _{Y'' _i}$-module cohérent de $\FF'' _i $ tel que $\overline{\FF'} _i =\G _i | \U ' \cap \Y''$.
Avec \ref{cohssi=cohi}, $\G _i$ est aussi $\smash{\widehat{\D}} ^{(m)} _{\Y''}$-cohérent. 
Il en résulte que $\G := t ^{-1} (\G _i) $ est un 
sous-$\smash{\widehat{\D}} ^{(m)} _{\Y'' }$-module cohérent de $\FF''$. 
Comme $\pi ^{i +1} \FF'' \subset \G \subset \FF''$, on obtient 
$\G _\Q = \FF'' _\Q$.
On a de plus
$\G | \U ' \cap \Y''=(t | \U' \cap \Y'' ) ^{-1} (\overline{\FF'} _i ) =\theta (\FF' )$.
On note alors $\theta '\colon \FF' \to \G | \U ' \cap \Y''$ le morphisme factorisant $\theta$
et on pose
$\H := (\iota ^{\prime \prime}) ^{-1} (u ^{\prime \prime} _+ (\G)) \subset \E ''$.
Comme $\G _\Q = \FF'' _\Q$, alors $\H _\Q = \E'' _\Q$.
On dispose alors du diagramme commutatif
\begin{equation}
\label{3.4.3Be1-gen-diag2}
\xymatrix@C=1,5cm{
{\E' |\U' \cap \U''} 
\ar[rr] ^-{\iota '} _-{\sim}
\ar@/_1,2cm/@{^{(}->}[dd] _-{\epsilon}
\ar@{.>}[d] ^-{}
&& 
{v ^{\prime \prime} _+ (\FF')}
\ar@/^1,2cm/@{^{(}->}[dd] ^-{v ^{\prime \prime} _+ (\theta)}
\ar[d] _-{v ^{\prime \prime} _+ (\theta ')} ^-{\sim}
\\ 
{\H |\U' \cap \U''} 
\ar[r] ^-{\iota ''|\U' \cap \U''} _-{\sim}  
\ar@{^{(}->}[d]
& 
{ u ^{\prime \prime} _+ (\G)|\U' \cap \U''} 
\ar[r] ^-{\sim}  
\ar@{^{(}->}[d]
&
{v ^{\prime \prime} _+ (\G |\U' \cap \Y'')} 
\ar@{^{(}->}[d]
\\
{\E'' |\U' \cap \U''} 
\ar[r] ^-{\iota ''|\U' \cap \U''} _-{\sim}  
& 
{ u ^{\prime \prime} _+ (\FF'')|\U' \cap \U''} 
\ar[r] ^-{\sim}  
&
{v ^{\prime \prime} _+ (\FF'' |\U' \cap \Y'').} 
}
\end{equation}
dont le carré de gauche est commutatif et cartésien par définition de $\H$,
dont le carré de droite est commutatif et cartésien par fonctorialité,
dont le contour est commutatif car égal à celui de \ref{3.4.3Be1-gen-diag1}.
On en déduit que $\epsilon$ se factorise en 
l'isomorphisme $\E' |\U' \cap \U'' \riso \H |\U' \cap \U''$.
Les deux faisceaux $\E'$ et $\H$ se recollent donc en un faisceau que l'on note $\E$. 
Le faisceau $\E$ a les propriétés voulues.
\end{proof}

\begin{vide}
\label{defi-loc}
On construit le foncteur $\mathcal{L}oc ^{(m)}\colon \mathrm{Coh} ^{(m)} (\U,Y/\V)
\rightarrow
\mathrm{Coh} ^{(m)} (Y,\, (\Y _\alpha) _{\alpha \in \Lambda}/\V)$ de la façon suivante :
soit $\E$ un $\smash{\widehat{\D}} ^{(m)} _{\U }$-module cohérent appartenant à $\mathrm{Coh} ^{(m)} (\U, Y/\V)$.
Pour tout $\alpha \in \Lambda $, il existe un $\smash{\widehat{\D}} ^{(m)} _{\Y _\alpha}$-module cohérent $\E _\alpha$ tel que $v_{\alpha+} (\E _\alpha) \riso \E |\U _\alpha$.
Grâce à \ref{cohUY-eqcatrel},
$\mathcal{H} ^0  v _{\alpha } ^! (\E |{\U _\alpha})$ est
un $\smash{\widehat{\D}} ^{(m)} _{\Y _\alpha}$-module cohérent. 
On définit alors l'isomorphisme
$\theta _{\alpha \beta}\colon  p  _2 ^{\alpha \beta !}  \mathcal{H} ^0 v _{\beta } ^! (\E |{\U _\beta})
\riso
p  _1 ^{\alpha \beta !} \mathcal{H} ^0  v _{\alpha } ^! (\E |{\U _\alpha}) $,
comme étant l'unique flèche rendant commutatif le diagramme suivant
\begin{equation}
  \label{coh->cohloc}
  \xymatrix  @R=0,3cm {
{ p  _2 ^{\alpha \beta !} \mathcal{H} ^0  v _{\beta } ^! (\E |{\U _\beta}) }
\ar[r]^-{ \mathcal{H} ^0 \tau} _-{\sim}
\ar@{.>}[d] ^-{\theta _{\alpha \beta}}
&
{\mathcal{H} ^0 v ^! _{\alpha \beta} ( (\E |{\U _\beta}) |{\U _{\alpha \beta}}) }\ar@{=}[d] \\
{ p  _1 ^{\alpha \beta !} \mathcal{H} ^0  v _{\alpha } ^! (\E |{\U _\alpha}) }
\ar[r]^-{ \mathcal{H} ^0 \tau} _-{\sim}
&
{\mathcal{H} ^0 v ^! _{\alpha \beta} ( (\E |{\U _\alpha}) |{\U _{\alpha \beta}}) ,}
}
\end{equation}
où les isomorphismes horizontaux sont les isomorphismes de recollement définis dans \ref{nota-tau} 
auquels on a appliqué le foncteur $ \mathcal{H} ^0 $ (on utilise aussi 
que les foncteurs $p  _1 ^{\alpha \beta !}$ et $p  _2 ^{\alpha \beta !}$ sont exactes
car $p  _1 ^{\alpha \beta }$ et $p  _2 ^{\alpha \beta }$ sont des immersions ouvertes).
Comme lors de la preuve de \cite[2.5.4]{caro-construction}, on vérifie alors que
le foncteur $\mathcal{L}oc ^{(m)}$ est bien défini en posant
\begin{equation}
  \label{egalite-defi-loc}
  \mathcal{L}oc ^{(m)} (\E) :=
((\mathcal{H} ^0 v _{\alpha } ^!
(\E |{\U _\alpha})) _{\alpha \in \Lambda},\, (\theta _{\alpha \beta})_{\alpha , \beta \in \Lambda}).
\end{equation}
On dispose en outre de la factorisation (encore notée) $\mathcal{L}oc ^{(m)}\colon \mathrm{Cris} ^{(m)} (\U,Y/\V)
\rightarrow
\mathrm{Cris} ^{(m)} (Y,\, (\Y _\alpha) _{\alpha \in \Lambda}/\V)$.
\end{vide}

\begin{vide}
\label{defi-reloc}
Soit
$((\E _{\alpha})_{\alpha \in \Lambda},\, (\theta _{\alpha\beta}) _{\alpha ,\beta \in \Lambda})
\in
\mathrm{Coh} ^{(m)} (Y,\, (\Y _\alpha) _{\alpha \in \Lambda}/\V)$.
Comme dans la preuve de \cite[2.5.4]{caro-construction}, on vérifie que
la famille
$(v _{\alpha +}(\E _{\alpha}))_{\alpha \in \Lambda}$ se recolle
en un $\smash{\widehat{\D}} ^{(m)} _{\U}$-module cohérent à support dans $Y$.
On obtient ainsi le foncteur canonique :
$$\mathcal{R}ecol ^{(m)}
\colon 
\mathrm{Coh} ^{(m)} (Y,\, (\Y _\alpha) _{\alpha \in \Lambda}/\V)
\rightarrow
\mathrm{Coh} ^{(m)} (\U,Y/\V).$$
On dispose en outre de la factorisation (encore notée) 
$\mathcal{R}ecol ^{(m)}
\colon 
\mathrm{Cris} ^{(m)} (Y,\, (\Y _\alpha) _{\alpha \in \Lambda}/\V)
\rightarrow
\mathrm{Cris} ^{(m)} (\U,Y/\V).$
\end{vide}

\begin{vide}
\label{eqcatmVcoh}
De manière analogue à la preuve de \cite[2.5.4]{caro-construction} (qui est la version analogue en remplaçant {\og $(m)$\fg}
par {\og $\dag$\fg}), on vérifie que
les deux foncteurs $\mathcal{L}oc ^{(m)}$ et $\mathcal{R}ecol ^{(m)}$ induisent des équivalences quasi-inverses entre les catégories
$\mathrm{Coh} ^{(m)} (\U,Y/\V)$
et
$\mathrm{Coh} ^{(m)} (Y,\, (\Y _\alpha) _{\alpha \in \Lambda}/\V)$
(resp. $\mathrm{Cris} ^{(m)} (\U,Y/\V)$
et
$\mathrm{Cris} ^{(m)} (Y,\, (\Y _\alpha) _{\alpha \in \Lambda}/\V)$).

\end{vide}

\begin{vide}
\label{section-diagcomm-otimesk}
Désignons par $\mathrm{Coh} ( \smash{\D} ^{(m)} _{Y})$ la catégorie des
$\smash{\D} ^{(m)} _{Y}$-modules cohérents, de même en remplaçant $Y$ par $U$.
On bénéficie du foncteur
$- \otimes _{\V} k\colon \mathrm{Coh} ^{(m)} (\U,Y/\V)
\rightarrow
\mathrm{Coh} ( \smash{\D} ^{(m)} _{U})$.

De plus,
soit $((\E _{\alpha})_{\alpha \in \Lambda},\, (\theta _{\alpha\beta}) _{\alpha ,\beta \in \Lambda})
\in \mathrm{Coh} ^{(m)} (Y,\, (\Y _\alpha) _{\alpha \in \Lambda}/\V)$.
En identifiant $\smash{\widehat{\D}} ^{(m)} _{\Y _\alpha} \otimes _\V k $ et
$\smash{\D} ^{(m)} _{Y _\alpha}$,
on remarque alors que
la famille $(\theta _{\alpha\beta}\otimes _{\V} k) _{\alpha ,\beta \in \Lambda}$
est une donnée de recollement (au sens topologique usuel car les isomorphismes de recollement de la forme $\tau$ de \ref{defi-f+-f+(m)} deviennent l'identité)
de la famille $(\E _{\alpha}\otimes _{\V} k)_{\alpha \in \Lambda}$.
On obtient donc par recollement un
$\smash{\D} ^{(m)} _{Y}$-module cohérent.
Ainsi, on dispose du foncteur (noté de façon légèrement abusive)
\begin{equation}
\label{defi-recol-otimes-k}
- \otimes _{\V} k\text{ : }
\mathrm{Coh} ^{(m)} (Y,\, (\Y _\alpha) _{\alpha \in \Lambda}/\V)
\rightarrow
\mathrm{Coh} ( \smash{\D} ^{(m)} _{Y}).
\end{equation}
On vérifie de plus que l'on dispose du diagramme commutatif à isomorphisme canonique près de foncteurs :
\begin{equation}
\label{diagcomm-otimesk}
  \xymatrix {
{\mathrm{Coh} ^{(m)} (Y,\, (\Y _\alpha) _{\alpha \in \Lambda}/\V) }
\ar[r] ^-{\mathcal{R}ecol ^{(m)}}
\ar[d] _-{- \otimes _{\V} k}
&
{\mathrm{Coh} ^{(m)} (\U,Y/\V)  }
\ar[d] ^-{- \otimes _{\V} k}
\\
{ \mathrm{Coh} ( \smash{\D} ^{(m)} _{Y}) }
\ar[r] ^-{v_{+}}
&
{ \mathrm{Coh} ( \smash{\D} ^{(m)} _{U}) }.}
\end{equation}
\end{vide}

\begin{vide}
\label{etsurQ?}
\begin{enumerate}
\item \label{defi-coh-m-alpahQ} On construit la catégorie
$\mathrm{Coh} ^{(m)} (Y,\, (\Y _\alpha) _{\alpha \in \Lambda}/K)$
de manière analogue à
$\mathrm{Coh} ^{(m)} (Y,\, (\Y _\alpha) _{\alpha \in \Lambda}/\V)$
(voir \ref{defi-coh-m-alpah})
en remplaçant la notion de
{\og
$\smash{\widehat{\D}} ^{(m)} _{\Y _{\alpha}}$-module cohérent 
\fg}
par celle de
{\og
$\smash{\widehat{\D}} ^{(m)} _{\Y _{\alpha},\Q}$-module cohérent
\fg}.
Le foncteur $-\otimes _\Z \Q$ induit de cette façon le suivant
$-\otimes _\Z \Q\colon \mathrm{Coh} ^{(m)} (Y,\, (\Y _\alpha) _{\alpha \in \Lambda}/\V)
\rightarrow
\mathrm{Coh} ^{(m)} (Y,\, (\Y _\alpha) _{\alpha \in \Lambda}/K)$.

\item \label{defi-coh-m-UQ} On définit $\mathrm{Coh} ^{(m)} (\U,Y/K)$
la catégorie des
$\smash{\widehat{\D}} ^{(m)} _{\U,\Q}$-modules cohérents $\E$ à support dans $Y$ tels qu'il
existe un élément $\overset{_\circ}{\E} \in \mathrm{Coh} ^{(m)} (\U,Y/\V)$ (voir \ref{defi-coh-m-U})
et un isomorphisme $\smash{\widehat{\D}} ^{(m)} _{\U,\Q}$-linéaire
$\E \riso \overset{_\circ}{\E}  _\Q$.
Par définition, on dispose du foncteur
$-\otimes _\Z \Q\colon \mathrm{Coh} ^{(m)} (\U,Y/\V) \rightarrow \mathrm{Coh} ^{(m)} (\U,Y/K)$.

\item On construit de manière analogue à respectivement \ref{defi-loc} et \ref{defi-reloc} les foncteurs
\begin{gather}
  \notag
  \mathcal{L}oc ^{(m)} _{\Q}\, :
\mathrm{Coh} ^{(m)} (\U,Y/K)
\rightarrow
\mathrm{Coh} ^{(m)} (Y,\, (\Y _\alpha) _{\alpha \in \Lambda}/K).
\\
\mathcal{R}ecol ^{(m)} _{\Q}
\colon 
\mathrm{Coh} ^{(m)} (Y,\, (\Y _\alpha) _{\alpha \in \Lambda}/K)
\rightarrow
\mathrm{Coh} ^{(m)} (\U,Y/K).
\end{gather}
Ceux-ci sont quasi-inverses (cela se vérifie comme pour \cite[2.5.4]{caro-construction} en remplaçant {\og $\dag$\fg}
par {\og $(m)$\fg}).
\item \label{commQ} On dispose des isomorphismes canoniques de foncteurs 
$\mathcal{L}oc ^{(m)} _{\Q} \circ (\Q \otimes _{\Z} -) \riso  (\Q \otimes _{\Z} -) \circ \mathcal{L}oc ^{(m)} $ et $\mathcal{R}ecol^{(m)} _{\Q} \circ (\Q \otimes _{\Z} -) \riso (\Q \otimes _{\Z} -) \circ \mathcal{R}ecol^{(m)}$. 

\item \label{loc-recol-comm-niveaux}
Soit $m' \geq m$ un entier.
On définit le foncteur $\smash{\widehat{\D}} ^{(m')} _{Y,\Q} \otimes _{\smash{\widehat{\D}} ^{(m)} _{Y,\Q}} -\colon  \mathrm{Coh} ^{(m)} (Y,\, (\Y _\alpha) _{\alpha \in \Lambda}/K) \to \mathrm{Coh} ^{(m')} (Y,\, (\Y _\alpha) _{\alpha \in \Lambda}/K)$ en posant, pour tout 
$\E:= ((\E _{\alpha})_{\alpha \in \Lambda},\, (\theta _{\alpha\beta}) _{\alpha ,\beta \in \Lambda})
\in
\mathrm{Coh} ^{(m)} (Y,\, (\Y _\alpha) _{\alpha \in \Lambda}/\V)$,
$$\smash{\widehat{\D}} ^{(m')} _{Y,\Q} \otimes _{\smash{\widehat{\D}} ^{(m)} _{Y,\Q}} \E
:=
(\smash{\widehat{\D}} ^{(m')} _{\Y _\alpha,\Q} \otimes _{\smash{\widehat{\D}} ^{(m)} _{\Y _\alpha,\Q}} 
\E _{\alpha})_{\alpha \in \Lambda},$$ ce dernier étant muni de la donnée de recollement déduite par extension (via les isomorphismes de commutation de l'image inverse extraordinaire par un morphisme lisse au changement de niveau de \cite[3.4.6]{Beintro2}
et via le fait que les isomorphismes de $\tau $ de \ref{nota-tau} commutent au changement de niveau d'après \cite[2.1.2]{caro-construction}).

Il résulte de \cite[3.5.3]{Beintro2} que le foncteur 
$\mathcal{R}ecol ^{(m)} _{\Q}$ commute au changement de niveaux : on dispose de l'isomorphisme canonique de foncteurs 
$(\smash{\widehat{\D}} ^{(m')} _{\U,\Q} \otimes _{\smash{\widehat{\D}} ^{(m)} _{\U,\Q}} -) \circ \mathcal{R}ecol ^{(m)} _{\Q}
\riso \mathcal{R}ecol ^{(m')} _{\Q}  \circ (\smash{\widehat{\D}} ^{(m')} _{Y,\Q} \otimes _{\smash{\widehat{\D}} ^{(m)} _{Y,\Q}} -) $.  

Pour mémoire (en fait nous n'aurons pas besoin de ce dernier isomorphisme par la suite), comme pour tout entier $m$ le foncteur $\mathcal{L}oc ^{(m)} _{\Q}$ est 
quasi-inverse de 
$\mathcal{R}ecol ^{(m)} _{\Q}$, il en résulte 
$(\smash{\widehat{\D}} ^{(m')} _{Y,\Q} \otimes _{\smash{\widehat{\D}} ^{(m)} _{Y,\Q}} -) \circ  \mathcal{L}oc ^{(m)} _{\Q}
\riso 
 \mathcal{L}oc ^{(m')} _{\Q}\circ (\smash{\widehat{\D}} ^{(m')} _{\U,\Q} \otimes _{\smash{\widehat{\D}} ^{(m)} _{\U,\Q}} -) $.

\item  \label{loc-recol-comm-niveauxlim} Avec les notations de \ref{prop-donnederecol-dag}, on construit de même le foncteur 
$(\D ^{\dag} _{Y,\Q} \otimes _{\smash{\widehat{\D}} ^{(m)} _{Y,\Q}} -) \colon  
\mathrm{Coh} ^{(m)} (Y,\, (\Y _\alpha) _{\alpha \in \Lambda}/K) \to \mathrm{Coh}  (Y,\, (\Y _\alpha) _{\alpha \in \Lambda}/K)$.
On déduit de \cite[4.3.8]{Beintro2} que l'on dispose de l'isomorphisme canonique de foncteurs :
$(\D ^{\dag} _{\U,\Q} \otimes _{\smash{\widehat{\D}} ^{(m)} _{\U,\Q}} -) \circ \mathcal{R}ecol ^{(m)} _{\Q}
\riso \mathcal{R}ecol   \circ (\D ^{\dag} _{Y,\Q} \otimes _{\smash{\widehat{\D}} ^{(m)} _{Y,\Q}} -) $.

Cela implique au passage
$(\D ^{\dag} _{Y,\Q} \otimes _{\smash{\widehat{\D}} ^{(m)} _{Y,\Q}} -) \circ  \mathcal{L}oc ^{(m)} _{\Q}
\riso 
 \mathcal{L}oc \circ (\D ^{\dag} _{\U,\Q} \otimes _{\smash{\widehat{\D}} ^{(m)} _{\U,\Q}} -) $.

\end{enumerate}

\end{vide}

\begin{lemm}
\label{stab-ssquot-O-coh}
Soient $\PP$ un $\V$-schéma formel lisse et $\E$ un $\smash{\widehat{\D}} ^{(m)} _{\PP}$-module cohérent.
\begin{enumerate}
\item Supposons $\PP$ affine. Le faisceau $\E$ est $\O _{\PP}$-cohérent si et seulement si $\Gamma (\PP, \E)$ est $\Gamma (\PP, \O _{\PP})$-cohérent.
\item 
\label{stab-ssquot-O-coh-2}
Soit $\E'$ un sous-quotient de $\E$ dans la catégorie des $\smash{\widehat{\D}} ^{(m)} _{\PP}$-modules cohérents.
Si $\E$ est $\O _{\PP}$-cohérent alors $\E'$ est $\O _{\PP}$-cohérent. 
\end{enumerate}

\end{lemm}

\begin{proof}
Établissons d'abord la première assertion. 
Posons $E:= \Gamma (\PP, \E)$, $O _{\PP}:= \Gamma (\PP, \O _{\PP})$ 
et $\smash{\widehat{D}} ^{(m)} _{\PP}:= \Gamma (\PP,\smash{\widehat{\D}} ^{(m)} _{\PP})$.
Si $\E$ est $\O _{\PP}$-cohérent  alors d'après le théorème de type $A$ pour les $\O _{\PP}$-modules cohérents (voir \cite[3]{Be1}), 
$E$ est $O _{\PP}$-cohérent.  Réciproquement, supposons que $E$ soit $O _{\PP}$-cohérent.
Considérons le diagramme canonique commutatif: 
\begin{equation}
\xymatrix{
{\O _{\PP} \widehat{\otimes} _{O _{\PP}} E} 
\ar[d] ^-{\sim}
& 
{\O _{\PP} \otimes _{O _{\PP}} E}  
\ar[l] ^-{\sim}
\ar[d] ^-{}
\ar[r] ^-{}
&
{\E}
\ar@{=}[d] ^-{}
\\ 
{\smash{\widehat{\D}} ^{(m)} _{\PP} \widehat{\otimes}_{\smash{\widehat{D}} ^{(m)} _{\PP}} E}  
& 
{\smash{\widehat{\D}} ^{(m)} _{\PP}\otimes _{\smash{\widehat{D}} ^{(m)} _{\PP}} E} 
\ar[l] ^-{\sim}
\ar[r] ^-{\sim}
&
{\E}
}
\end{equation}
Comme $E$ est $O _{\PP}$-cohérent (resp. 
$\smash{\widehat{D}} ^{(m)} _{\PP}$-cohérent grâce au théorème de type $A$ pour les $\smash{\widehat{\D}} ^{(m)} _{\PP}$-modules cohérents \cite[3.3.9]{Be1}), 
d'après le théorème de type $A$ de Berthelot de \cite[3.3.9]{Be1} valable pour les $\O _{\PP}$-modules cohérents (resp. $\smash{\widehat{\D}} ^{(m)} _{\PP}$-modules cohérents), le morphisme du haut (resp. du bas) à gauche
est un isomorphisme. 
Comme $\E$ est $\smash{\widehat{\D}} ^{(m)} _{\PP}$-cohérent, à nouveau grâce à \cite[3.3.9]{Be1},
on obtient que la flèche du bas à droite est un isomorphisme.
La flèche vertical de gauche est un isomorphisme par définition de $\smash{\widehat{\D}} ^{(m)} _{\PP}$.
Il en résulte qu'il en ait de même de la flèche de droite du haut. Via le théorème de type $A$ pour les $\O _{\PP}$-modules cohérents,
cela signifie que $\E$ est  $\O _{\PP}$-cohérent.

Prouvons à présent la deuxième assertion. Par stabilité de la cohérence, il suffit de traiter le cas où $\E'$ est un sous-$\smash{\widehat{\D}} ^{(m)} _{\PP}$-module cohérent de $\E$. La cohérence étant de nature locale, on se ramène à supposer
$\PP$ affine. D'après le théorème de type $B$ pour les $\smash{\widehat{\D}} ^{(m)} _{\PP}$-modules cohérents,
$ \Gamma (\PP, \E')$ est un sous-module de $ \Gamma (\PP, \E)$.
Comme $O _{\PP}$ est noethérien, $ \Gamma (\PP, \E')$ est $O _{\PP}$ -cohérent. 
On conclut grâce à la première assertion du lemme.
\end{proof}

\begin{lemm}
\label{coro-stab-ssquot-O-coh}
Soit $\E\in \mathrm{Cris} ^{(m)} (\U, Y/\V)$
et $\E' \in \mathrm{Coh} ^{(m)} (\U, Y/\V)$.
Si $\E'$ est un sous-module de $\E$ alors $\E' \in \mathrm{Cris} ^{(m)} (\U, Y/\V)$.
\end{lemm}

\begin{proof}
Le lemme étant local, on peut supposer 
qu'il existe 
$u \colon \Y\hookrightarrow \U$ 
un relèvement de $Y\hookrightarrow \U$.
Avec \ref{cohUY-eqcatrel},
$\mathcal{H} ^{0} u ^{!}  (\E')$ est un sous-$\smash{\widehat{\D}} ^{(m)} _{\Y}$-module cohérent de
$\mathcal{H} ^{0} u ^{!}  (\E)$. Il en résulte que $\mathcal{H} ^{0} u ^{!}  (\E')$
est topologiquement quasi-nilpotent.
Grâce à \ref{stab-ssquot-O-coh}.\ref{stab-ssquot-O-coh-2},
$\mathcal{H} ^{0} u ^{!}  (\E')$ est en outre $\O _{\Y}$-cohérent.
 On conclut grâce à \ref{cohUY-eqcatrel}.

\end{proof}

\begin{lemm}
Pour tout entier positif $m$, avec les notations de \ref{defindonnederecoldag}, 
on dispose de l'inclusion canonique:
\begin{equation}
\label{IsocdagdagsubsetIsoc(m)}
\mathrm{Isoc} ^{\dag \dag} (Y,\, (\Y _\alpha) _{\alpha \in \Lambda}/K) \subset \mathrm{Coh} ^{(m)} (Y,\, (\Y _\alpha) _{\alpha \in \Lambda}/K).
\end{equation}
\end{lemm}

\begin{proof}
On peut supposer $\U = \U _\alpha$ et on pose $\Y = \Y _\alpha$.
D'après \cite[3.1.2]{Be0}, si $\E$ est un $\D ^{\dag} _{\Y,\Q}$-module cohérent, 
$\O _{\Y,\Q}$-cohérent alors $\E$ est aussi $\widehat{\D} ^{(m)} _{\Y,\Q}$-cohérent 
pour tout entier $m$. D'où l'inclusion \ref{IsocdagdagsubsetIsoc(m)}.
\end{proof}

\begin{nota}
On notera 
$\mathrm{Isoc}  ^{(m)} (\U, Y/K)$ l'image essentielle de 
$$\mathrm{Isoc} ^{\dag \dag} (Y,\, (\Y _\alpha) _{\alpha \in \Lambda}/K)
\underset{\ref{IsocdagdagsubsetIsoc(m)}}{\subset}
\mathrm{Coh} ^{(m)} (Y,\, (\Y _\alpha) _{\alpha \in \Lambda}/K)
\underset{\mathcal{R}ecol ^{(m)}}{\cong}
\mathrm{Coh} ^{(m)} (\U, Y/K)
.$$
Le foncteur $-\otimes _{\Z}\Q$ de \ref{etsurQ?}.\ref{defi-coh-m-UQ}
se factorise alors en
$-\otimes _{\Z}\Q \colon \mathrm{Cris} ^{(m)}(\U, Y/\V) \to 
\mathrm{Isoc} ^{(m)}(\U, Y/K)$.
\end{nota}

\begin{theo}
\label{crist-loc-def}
\begin{itemize}
\item Pour tout objet $\E$ de 
$\mathrm{Isoc}  ^{(m)} (\U, Y/K)$, il existe 
un objet $\overset{\circ}{\E}$ de
$\mathrm{Cris} ^{(m)}(\U, Y/\V)$ et un isomorphisme dans 
$\mathrm{Isoc}  ^{(m)} (\U, Y/K)$ de la forme
$\overset{\circ}{\E} _\Q \riso \E$.

\item Pour tout objet $\FF=((\FF _{\alpha})_{\alpha \in \Lambda},\, (\theta _{\alpha\beta}) _{\alpha ,\beta \in \Lambda})$ de 
$\mathrm{Isoc} ^{\dag \dag} (Y,\, (\Y _\alpha) _{\alpha \in \Lambda}/K)$, il existe 
un objet $\overset{\circ}{\FF}=((\overset{\circ}{\FF _{\alpha}})_{\alpha \in \Lambda},\, (\theta _{\alpha\beta}) _{\alpha ,\beta \in \Lambda})$ de
$\mathrm{Cris} ^{(m)} (Y,\, (\Y _\alpha) _{\alpha \in \Lambda}/\V)$ et un isomorphisme dans 
$\mathrm{Coh} ^{(m)} (Y,\, (\Y _\alpha) _{\alpha \in \Lambda}/K)$ de la forme
$\overset{\circ}{\FF} _\Q \riso \FF$.
\end{itemize}
\end{theo}

\begin{proof}
Les deux assertions étant équivalentes, établissons la première. 
On peut supposer $\U$ intègre. Quitte à considérer un sous-recouvrement fini, on peut supposer que $\Lambda$ est fini. 
On procède par récurrence sur le cardinal de $\Lambda$. 
Supposons que ce cardinal soit $1$. 
Dans ce cas, $Y\hookrightarrow \U$ se relève en une immersion fermée de $\V$-schémas formels lisses de la forme
$u\colon \Y \hookrightarrow \U$. 
Posons $\G := \mathcal{L}oc ^{(m)}(\E)=\mathcal{0} u ^{(m)!} (\E)$. 
Alors $\G$ un $\D ^{\dag} _{\Y,\Q}$-module cohérent, 
$\O _{\Y,\Q}$-cohérent et tel que $\E \riso \mathcal{R}ecol ^{(m)} (\G) = u ^{(m)} _{+} (\G)$.
D'après \cite[3.1.2]{Be0}, $\G$ est aussi $\widehat{\D} ^{(m)} _{\Y,\Q}$-cohérent 
pour tout entier $m$. En particulier, $\G$ est topologiquement nilpotent (voir les remarques de \cite[4.4.6]{Be1}).
D'après \cite[3.1.2]{Be0}, il existe de plus un 
$\widehat{\D} ^{(m)} _{\Y}$-module cohérent topologiquement nilpotent
$\overset{\circ}{\G}$ et $\O _{\Y}$-cohérent et un isomorphisme $\widehat{\D} ^{(m)} _{\Y,\Q}$-linéaire de la forme
$\overset{\circ}{\G} _{\Q}\riso \G$.
Grâce à \cite[3.4.5]{Be1}, quitte à quotienter $\overset{\circ}{\G }$ par son sous-module de $p$-torsion,
on peut choisir le module $\overset{\circ}{\G }$ sans $p$-torsion.
D'où le lemme dans ce cas.

Supposons à présent le théorème vrai pour $\Lambda $ de cardinal strictement plus petit. 
Soit $\E \in \mathrm{Isoc}  ^{(m)} (\U, Y/K)$.
Choisissons $\alpha _0$ un élément de $\Lambda$ et posons 
$\U' := \cup _{\alpha \not = \alpha _0} \U _{\alpha}$
et $\U'' := \U _{\alpha _0}$.
Posons $Y': =Y \cap U'$, $Y'': =Y \cap U''$, $\E':= \E |\U'$ et $\E'':= \E |\U''$.
Par hypothèse de récurrence, 
il existe
$\overset{\circ}{\E'} \in \mathrm{Cris} ^{(m)}(\U', Y'/\V)$ et un isomorphisme dans 
$\mathrm{Isoc}  ^{(m)} (\U', Y'/K)$ de la forme
$\overset{\circ}{\E'} _\Q \riso \E'$.
De même, il existe
$\overset{\circ}{\E''} \in \mathrm{Cris} ^{(m)}(\U'', Y''/\V)$ et un isomorphisme dans 
$\mathrm{Isoc}  ^{(m)} (\U'', Y''/K)$ de la forme
$\overset{\circ}{\E''} _\Q \riso \E''$.
Comme $\E'$ et $\E''$ se recolle en $\E$, on dispose 
de l'isomorphisme canonique $\epsilon \colon \overset{\circ}{\E'} _\Q | \U ' \cap \U'' \riso \overset{\circ}{\E''} _\Q | \U ' \cap \U''$.
D'après \ref{3.4.3Be1-gen}, 
il existe alors $\overset{\circ}{\E}\in \mathrm{Coh} ^{(m)} (\U,Y/\V)$ 
prolongeant $\overset{\circ}{\E'}$ sur $\U'$ et une inclusion
$\overset{\circ}{\E} |\U'' \hookrightarrow p ^{-n} \overset{\circ}{\E''}$ pour un certain entier $n\geq 0$ tel que 
le morphisme induit 
$\overset{\circ}{\E} _\Q |\U'' \to \overset{\circ}{\E''} _\Q$ soit un isomorphisme prolongeant $ \epsilon$.
D'après \ref{coro-stab-ssquot-O-coh},
on en déduit que $\overset{\circ}{\E} |\U'' \in \mathrm{Cris} ^{(m)}(\U'', Y''/\V)$.
Comme on a aussi $\overset{\circ}{\E} |\U' =\overset{\circ}{\E'} \in \mathrm{Cris} ^{(m)}(\U', Y'/\V)$, 
il en résulte que $\overset{\circ}{\E}  \in \mathrm{Cris} ^{(m)}(\U, Y/\V)$.
On dispose de plus de l'isomorphisme $\overset{\circ}{\E} _\Q \riso \E$.
\end{proof}

\subsection{Préservation de la convergence par un morphisme propre et lisse de $k$-variétés lisses}

\begin{prop}
\label{b+cohDOstab}
Soit $b\colon Y' \rightarrow Y$ un morphisme propre et lisse de $k$-variétés lisses.
Le foncteur $ b ^{(m)} _{+}$ se factorise de la manière suivante :
$$b_{+}^{(m)} \colon 
D ^\mathrm{b} _\mathrm{coh} (\smash{\D} ^{(m)} _{Y'})
\cap D ^\mathrm{b} _\mathrm{coh} ( \O _{Y'})
\rightarrow
D ^\mathrm{b} _\mathrm{coh} (\smash{\D} ^{(m)} _{Y})
\cap D ^\mathrm{b} _\mathrm{coh} ( \O _{Y}),$$
où, par abus de notations, nous avons noté 
$D ^\mathrm{b} _\mathrm{coh} (\smash{\D} ^{(m)} _{Y})
\cap D ^\mathrm{b} _\mathrm{coh} ( \O _{Y})$ pour désigner la sous-catégorie pleine de 
$D ^\mathrm{b} _\mathrm{coh} (\smash{\D} ^{(m)} _{Y})$ des complexes à cohomologie $\O _{Y}$-cohérente, de même avec des primes.
\end{prop}

\begin{proof}
Comme $b $ est propre, on sait déjà que l'on dispose de la factorisation
$b_{+}^{(m)} \colon 
D ^\mathrm{b} _\mathrm{coh} (\smash{\D} ^{(m)} _{Y'})
\rightarrow
D ^\mathrm{b} _\mathrm{coh} (\smash{\D} ^{(m)} _{Y})$.
  Soit $\E ' \in D ^\mathrm{b} _\mathrm{coh} (\smash{\D} ^{(m)} _{Y'})
\cap D ^\mathrm{b} _\mathrm{coh} ( \O _{Y'})$.
Il reste à établir que $b ^{(m)} _{+} (\E') \in D ^\mathrm{b} _{\mathrm{coh}} (\O _Y)$.

Soit $F _{Y'} \colon  Y' \rightarrow Y'$ l'endomorphisme absolu de Frobenius.
Comme $F _{Y'} $ est en particulier un morphisme fini,
le théorème de descente du niveau par Frobenius (voir \cite[2]{Be2})
entraîne que le foncteur image inverse $F _{Y'} ^*$
induit une équivalence de catégories entre
$D ^\mathrm{b} _\mathrm{coh} (\smash{\D} ^{(m)} _{Y'})
\cap D ^\mathrm{b} _\mathrm{coh} ( \O _{Y'})$
et
$D ^\mathrm{b} _\mathrm{coh} (\smash{\D} ^{(m+1)} _{Y'})
\cap D ^\mathrm{b} _\mathrm{coh} ( \O _{Y'})$.
Comme il en est de même sur $Y$ et que le foncteur image directe
commute à Frobenius, i.e.
$F _{Y} ^*  \circ b_{+}^{(m)} \riso b_{+}^{(m+1)} \circ F _{Y'} ^* $,
on se ramène alors à traiter le cas où $m =0$.
Comme $b $ est lisse, d'après la remarque de \cite[2.4.6]{Beintro2},
$b ^{(0)} _{+} (\E') \riso
\R b _{*} ( \Omega ^\bullet _{Y'/Y} \otimes _{\O _{Y'}} \E') [d_{Y'/Y}]$.
On a $\Omega ^\bullet _{Y'/Y} \otimes _{\O _{Y'}} \E' \in D ^\mathrm{b} (b ^{-1} \O _Y)$.
On dispose alors de la suite spectrale dans la catégorie des $\O _Y$-modules:
$\mathcal{H} ^{s} \R b _{*} ( \Omega ^{r} _{Y'/Y} \otimes _{\O _{Y'}} \E')
\Rightarrow 
\mathcal{H} ^{n}\R b _{*} ( \Omega ^\bullet _{Y'/Y} \otimes _{\O _{Y'}} \E')$.
Or, comme $b $ est propre et comme 
$\Omega ^{r} _{Y'/Y} \otimes _{\O _{Y'}} \E'$ est $\O _{Y'}$-cohérent,
d'après le théorème de Grothendieck
sur la préservation de la $\O$-cohérence,
cela entraîne que
$\mathcal{H} ^{s} \R b _{*} ( \Omega ^{r} _{Y'/Y} \otimes _{\O _{Y'}} \E')$
est $\O _{Y}$-cohérent.
On en déduit $b ^{(0)} _{+} (\E') \in D ^\mathrm{b} _{\mathrm{coh}} (\O _Y)$.

\end{proof} 

\begin{lemm}
  \label{2.2.14courbe}
Soient $m_0\geq 0$ un entier, $\Y$ un $\V$-schéma formel lisse,
$\E ^{(m _0)}$ un $\smash{\widehat{\D}} ^{(m_0)} _{\Y, \Q}$-module cohérent,
$\O _{\Y, \Q}$-cohérent.
On suppose que,
pour tout entier $m \geq m_0$,
$\smash{\widehat{\D}} ^{(m)} _{\Y, \Q} \otimes _{\smash{\widehat{\D}} ^{(m_0)} _{\Y, \Q}} \E ^{(m _0)}$
est $\O _{\Y, \Q}$-cohérent.
Alors, $\smash{\D} ^{\dag} _{\Y, \Q} \otimes _{\smash{\widehat{\D}} ^{(m_0)} _{\Y, \Q}} \E ^{(m _0)}$
est $\O _{\Y, \Q}$-cohérent.
\end{lemm}

\begin{proof}
  Il suffit de reprendre la preuve de \cite[2.2.14]{caro_courbe-nouveau} à partir de
  {\og
De plus, puisque $\Gamma (X, \E ^{(sm+m_0)})$
  \fg}.
\end{proof}

\begin{prop}
[Préservation de la convergence]
  \label{b+isoccv-pl}
Soient $g\colon \U' \rightarrow  \U$ un morphisme de $\V$-schémas formels séparés et lisses,
$Y$ (resp. $Y'$) un sous-schéma fermé lisse de $U$ (resp. $U'$).
On suppose que $g$ se factorise en un morphisme $b\colon Y' \rightarrow Y$ propre et lisse.

Soit $\E'$ un objet de 
$(F\text{-})D ^\mathrm{b} _\mathrm{isoc} (\U ', Y'/K)$ (voir \ref{nota-6.2.1dev}).
Alors $g _+(\E')$ est un objet de 
$(F\text{-})D ^\mathrm{b} _\mathrm{isoc} (\U , Y/K)$.
\end{prop}

\begin{proof}
Le foncteur $g _+$ commutant à Frobenius, il suffit de vérifier la proposition sans structure de Frobenius.
Or, la catégorie $\mathrm{Isoc} ^{\dag \dag} (\U, Y/K)$ 
est stable par suite spectrale, i.e., une suite spectrale dont les flèches initiales sont dans $\mathrm{Isoc} ^{\dag \dag} (\U, Y/K)$ a pour aboutissement des objects de $\mathrm{Isoc} ^{\dag \dag} (\U, Y/K)$. 
 Quitte à utiliser la deuxième suite spectrale d'hypercohomologie du foncteur $g _+$, 
  on peut donc supposer que $\E'\in \mathrm{Isoc} ^{\dag \dag} (\U', Y'/K)$.   
Le morphisme $g$ se factorise en son graphe $\gamma _g \colon  \U ' \hookrightarrow \U' \times \U$ 
suivi de la projection canonique 
$\U' \times \U \to \U$.
Via le théorème de Berthelot-Kashiwara,
on vérifie que le foncteur $\gamma _{g, +}$ induit une 
  équivalence canonique
  entre les catégories 
    $\mathrm{Isoc} ^{\dag \dag}  (\U ', Y'/K)$ et $\mathrm{Isoc} ^{\dag \dag} (\U '\times \U, Y'/K)$.
  Quitte à remplacer $\U'$ par $\U' \times \U$,
 on se ramène ainsi au cas où $g$ lisse. 
  Comme $\E'$ est à support propre sur $Y$ et donc sur $\U$, il découle de 
  \ref{casliss631dev}
  que $g _+ (\E') \in D ^\mathrm{b} _\mathrm{surhol} (\D ^\dag _{\U,\Q})$ (en particulier $g _+ (\E') \in D ^\mathrm{b} _\mathrm{coh} (\D ^\dag _{\U,\Q})$).

D'après la caractérisation de \cite[2.5.10]{caro-construction} des $\D ^\dag _{\U,\Q}$-modules cohérents qui sont dans l'image essentielle de
$\sp _{Y \hookrightarrow \U,+}$, 
ce qu'il reste à établir est alors local en $\U$.
On peut donc supposer qu'il existe un relèvement $v\colon \Y \hookrightarrow \U$.
Notons $\U'':= \U ' \times _{\U} \Y$, $v ''\colon \U'' \hookrightarrow \U'$ et $h\colon \U'' \rightarrow \Y$
les projections canoniques. Comme $g$ est lisse, $\U'' $ est un $\V$-schéma formel lisse.
Comme $v$ est une immersion fermée, $v''$ l'est aussi.
L'immersion $Y' \hookrightarrow \U''$ est donc aussi fermée.
Via le théorème de Berthelot-Kashiwara (voir \ref{Berthelot-Kashiwara}), 
$v ^{\prime \prime !} (\E ') $ est un $\D ^\dag _{\U'',\Q}$-module surholonome et $ v '' _+ v ^{\prime \prime !} (\E ') \riso \E'$.
De plus, encore via la description \cite[2.5.10]{caro-construction}, on vérifie que 
$v ^{\prime \prime !} (\E ') $ est dans l'image essentielle de
$\sp _{Y' \hookrightarrow \U'',+}$. 
On se ramène ainsi au cas où $g=h$, i.e. $\Y= \U$ et $\U'' = \U'$.
Pour terminer la preuve, il s'agit d'établir $g_+ (\E')\in D ^\mathrm{b} _{\mathrm{coh}} (\O _{\Y,\Q})$.
Nous procédons pour cela de la manière suivante.

$\bullet$ \'Etape $1$.  
Notons $ v '$ l'immersion fermée canonique $Y' \hookrightarrow U'$ et 
soit $(\U '_{\alpha}) _{\alpha \in \Lambda}$ un recouvrement d'ouverts de $\U'$ satisfaisant aux conditions
de \ref{notat-construc}. Nous reprenons les notations de \ref{notat-construc} (en ajoutant des primes).
Comme $\E'$ est dans l'image essentielle de
$\sp _{Y' \hookrightarrow \U',+}$, d'après \cite[2.5.10]{caro-construction},
$\E'$ est à support dans $Y'$ et,
pour tout $\alpha \in \Lambda$,
$\E ' _\alpha := v  _\alpha ^{\prime !} (\E' |\U ' _\alpha)$ est $\O _{\Y ' _\alpha,\Q}$-cohérent.
Avec les notations de \ref{defindonnederecoldag},
il en résulte 
$\mathcal{L}oc (\E')\in 
\mathrm{Isoc} ^{\dag \dag} (Y',\, (\Y ' _\alpha) _{\alpha \in \Lambda}/K)$. 
D'après \ref{crist-loc-def}, pour chaque entier $m$ (entier non fixé parcourant $\N$), on dispose d'une inclusion canonique
$\mathrm{Isoc} ^{\dag \dag} (Y',\, (\Y '_\alpha) _{\alpha \in \Lambda}/K) \subset \mathrm{Coh} ^{(m)} (Y',\, (\Y '_\alpha) _{\alpha \in \Lambda}/K)$
et il existe un élément  $\FF ^{\prime (m)}\in \mathrm{Cris} ^{(m)} (Y',\, (\Y '_\alpha) _{\alpha \in \Lambda}/\V)$ tel que 
 dans
$\mathrm{Coh} ^{(m)} (Y',\, (\Y '_\alpha) _{\alpha \in \Lambda}/K)$
l'on ait l'isomorphisme $\FF ^{\prime (m)} _{\Q } \riso \mathcal{L}oc (\E')$. 

$\bullet$ \'Etape $2$ :
{\it Posons $\G ^{\prime (m)}:= \mathcal{R}ecol ^{(m)} (\FF ^{\prime (m)})$.
Alors $g ^{(m)} _+ (\G ^{\prime (m)}) \in D ^\mathrm{b} _\mathrm{coh} (\smash{\widehat{\D}} ^{(m)} _{\Y})\cap D ^\mathrm{b} _{\mathrm{coh}} (\O _\Y)$.}

\noindent En effet, posons $\FF ^{\prime (m)} _0:= \FF ^{\prime (m)} \otimes _\V k $, $\G ^{\prime (m)} _0:= \G ^{\prime (m)} \otimes _\V k $.
Par \ref{diagcomm-otimesk}, on dispose alors de l'isomorphisme
$\smash{\D} ^{(m)} _{U'}$-linéaire :
$\G ^{\prime (m)} _0\riso v^{\prime (m)} _{+} (\FF ^{\prime (m)} _0)$.
Cela implique par transitivité de l'image directe :
$g ^{(m)} _{+} (\G ^{\prime (m)} _0) \riso b ^{(m)} _{+} (\FF ^{\prime (m)} _0)$.
Comme $b $ est propre et lisse, d'après \ref{b+cohDOstab}, il en résulte 
$g ^{(m)} _{+} (\G ^{\prime (m)} _0) 
\in D ^\mathrm{b} _{\mathrm{coh}} (\D^{(m)} _Y) \cap  D ^\mathrm{b} _{\mathrm{coh}} (\O _Y)$.
Or, 
$g ^{(m)} _+ (\G ^{\prime (m)}) \otimes _\V ^\L k \riso 
g ^{(m)} _{+} (\G ^{\prime (m)} \otimes _\V ^\L k)$. 
Or, comme $\G ^{\prime (m)}$ est sans $p$-torsion, 
$\G ^{\prime (m)} \otimes _\V ^\L k \riso \G ^{\prime (m)} _0$. D'où:
$g ^{(m)} _+ (\G ^{\prime (m)}) \otimes _\V ^\L k  \in D ^\mathrm{b} _{\mathrm{coh}} (\D ^{(m)}_Y) \cap D ^\mathrm{b} _{\mathrm{coh}} (\O _Y)$.
Comme de plus $g ^{(m)} _+ (\G ^{\prime (m)})\in D ^\mathrm{b} _\mathrm{qc} (\smash{\widehat{\D}} ^{(m)} _{\Y})$, d'après la remarque de \cite[3.2.2]{Beintro2} (et de manière analogue en remplaçant $\D$ par $\O$),
cela implique que
$g ^{(m)} _+ (\G ^{\prime (m)}) \in D ^\mathrm{b} _\mathrm{coh} (\smash{\widehat{\D}} ^{(m)} _{\Y})\cap D ^\mathrm{b} _{\mathrm{coh}} (\O _\Y)$.

$\bullet$ \'Etape $3$ : {\it Conclusion}.

a) Comme $g ^{(m)} _+ (\G ^{\prime (m)}) \otimes _\Z \Q \riso
g ^{(m)} _+ (\G ^{\prime (m)} _\Q)$ (où le dernier foncteur $g ^{(m)} _+$ est le foncteur image directe en tant
que $\smash{\widehat{\D}} ^{(m)} _{\U ',\Q}$-module cohérent),
il découle de l'étape $2$ que
$g ^{(m)} _+ (\G ^{\prime (m)} _\Q) \in D ^\mathrm{b} _\mathrm{coh} (\smash{\widehat{\D}} ^{(m)} _{\Y,\Q})\cap  D ^\mathrm{b} _{\mathrm{coh}} (\O _{\Y,\Q})$.
Comme en particulier pour tout entier $m$ le complexe $g ^{(m)} _+ (\G _\Q ^{\prime (m)})\in D ^\mathrm{b} _\mathrm{coh} (\smash{\widehat{\D}} ^{(m)} _{\Y,\Q})$,
on vérifie alors de manière analogue à \cite[3.5.3]{Beintro2}
(l'hypothèse de \cite[3.5.3]{Beintro2} que le morphisme soit propre n'intervient que pour obtenir la cohérence de l'image directe, en effet il suffit d'utiliser \cite[2.4.3]{Beintro2})
pour tous entiers $m' \geq m\colon \smash{\widehat{\D}} ^{(m')} _{\Y,\Q}
 \otimes _{\smash{\widehat{\D}} ^{(m)} _{\Y,\Q}}g ^{(m)} _+ (\G _\Q ^{\prime (m)})
\riso
g ^{(m')} _+ (\smash{\widehat{\D}} ^{(m')} _{\U ',\Q}
 \otimes _{\smash{\widehat{\D}} ^{(m)} _{\U ',\Q}} \G ^{\prime(m)} _\Q)$.
De façon similaire à la preuve de \cite[4.3.8]{Beintro2},
cela entraîne : 
$g_+ (\smash{\D} ^{\dag} _{\U',\Q} \otimes _{\smash{\widehat{\D}} ^{(m)} _{\U',\Q}} \G _\Q ^{\prime (m)}) \riso \smash{\D} ^{\dag} _{\Y,\Q} \otimes _{\smash{\widehat{\D}} ^{(m)} _{\Y,\Q}}g ^{(m)} _+ (\G _\Q ^{\prime (m)})$.

b) Par \ref{etsurQ?}.\ref{commQ},
on obtient l'isomorphisme
$\smash{\widehat{\D}} ^{(m)} _{\U ',\Q}$-linéaire :
$\G ^{\prime (m)} _\Q \riso \mathcal{R}ecol ^{(m)} _{\Q} (\FF ^{\prime (m)}_\Q)$.
D'après \ref{etsurQ?}.\ref{loc-recol-comm-niveaux}, 
pour tout entier $m' \geq m$, il en résulte 
$\smash{\widehat{\D}} ^{(m')} _{\U',\Q} \otimes _{\smash{\widehat{\D}} ^{(m)} _{\U',\Q}} \G ^{\prime (m)} _\Q 
\riso \mathcal{R}ecol ^{(m')} _{\Q}  (\smash{\widehat{\D}} ^{(m')} _{Y',\Q} \otimes _{\smash{\widehat{\D}} ^{(m)} _{Y',\Q}} \FF ^{\prime (m)}_\Q) $.
Or, comme $\FF ^{\prime (m)}_\Q \in \mathrm{Isoc} ^{\dag \dag} (Y',\, (\Y '_\alpha) _{\alpha \in \Lambda}/K) $, 
il résulte de \cite[3.1]{Be0} l'isomorphisme canonique $\FF ^{\prime (m')} _{\Q } \riso \smash{\widehat{\D}} ^{(m')} _{Y',\Q} \otimes _{\smash{\widehat{\D}} ^{(m)} _{Y',\Q}} \FF ^{\prime (m)}_\Q$.
D'où : $\smash{\widehat{\D}} ^{(m')} _{\U',\Q} \otimes _{\smash{\widehat{\D}} ^{(m)} _{\U',\Q}} \G ^{\prime (m)} _\Q 
\riso \mathcal{R}ecol ^{(m')} _{\Q}  ( \FF ^{\prime (m')}_\Q)
\riso \G ^{\prime (m')}_\Q $.
D'après a),  cela implique 
\begin{equation}
 \label{b+isoccv-pl-etapeb}
\smash{\widehat{\D}} ^{(m')} _{\Y,\Q}
 \otimes _{\smash{\widehat{\D}} ^{(m)} _{\Y,\Q}}g ^{(m)} _+ (\G _\Q ^{\prime (m)}) 
 \in D ^\mathrm{b} _\mathrm{coh} (\smash{\widehat{\D}} ^{(m')} _{\Y,\Q})\cap  D ^\mathrm{b} _{\mathrm{coh}} (\O _{\Y,\Q}).
\end{equation}

c) De plus, on déduit de \cite[3.1]{Be0} l'isomorphisme 
$\mathcal{L}oc (\E') \riso \smash{\D} ^{\dag} _{Y',\Q} \otimes _{\smash{\widehat{\D}} ^{(m)} _{Y',\Q}} \FF ^{\prime (m)}_\Q$.
D'après \ref{etsurQ?}.\ref{loc-recol-comm-niveauxlim}, il en résulte l'isomorphisme
$\mathcal{R}ecol \circ \mathcal{L}oc (\E')
\riso  
\smash{\D} ^{\dag} _{\U',\Q} \otimes _{\smash{\widehat{\D}} ^{(m)} _{\U',\Q}} \G ^{\prime (m)} _\Q  $.
Par \ref{prop-donnederecol-dag}, 
on en déduit 
$\E' \riso \smash{\D} ^{\dag} _{\U',\Q} \otimes _{\smash{\widehat{\D}} ^{(m)} _{\U',\Q}} \G ^{\prime (m)} _\Q  $.
D'après a), cela entraîne 
\begin{equation}
 \label{b+isoccv-pl-etapec}
 g _+ (\E ') \riso \smash{\D} ^{\dag} _{\Y,\Q} \otimes _{\smash{\widehat{\D}} ^{(m)} _{\Y,\Q}}g ^{(m)} _+ (\G _\Q ^{\prime (m)}).
\end{equation}

d) Pour tout entier $j$, 
$\mathcal{H} ^{j} (g ^{(0)} _+ (\G _\Q ^{\prime (0)}))$ est un $\smash{\widehat{\D}} ^{(0)} _{\Y,\Q}$-module cohérent, $\O _{\Y,\Q}$-cohérent.
Comme l'extension $\smash{\widehat{\D}} ^{(0)} _{\Y,\Q} \rightarrow \smash{\widehat{\D}} ^{(m)} _{\Y,\Q}$ est plate, il découle de 
\ref{b+isoccv-pl-etapeb} que 
$\smash{\widehat{\D}} ^{(m)} _{\Y,\Q}
 \otimes _{\smash{\widehat{\D}} ^{(0)} _{\Y,\Q}} (\mathcal{H} ^{j} g ^{(0)} _+ (\G _\Q ^{\prime (0)}) )$ est
 $\O _{\Y,\Q}$-cohérent. De plus, comme l'extension $\smash{\widehat{\D}} ^{(0)} _{\Y,\Q} \rightarrow \smash{\D} ^{\dag} _{\Y,\Q}$ est plate, via \ref{b+isoccv-pl-etapec}, on obtient
 $\mathcal{H} ^{j} (g _+ (\E ') ) \riso 
 \smash{\D} ^{\dag} _{\Y,\Q}
 \otimes _{\smash{\widehat{\D}} ^{(0)} _{\Y,\Q}} (\mathcal{H} ^{j} g ^{(0)} _+ (\G _\Q ^{\prime (0)}) )$.
Par \ref{2.2.14courbe}, 
il en résulte que
$\mathcal{H} ^{j} (g _+ (\E ') )$ est $\O _{\Y,\Q}$-cohérent.

\end{proof}

\subsection{Préservation de la surconvergence}

\begin{theo}
\label{b+proprelisse-varlisse-gen}
Soit $\theta =(f, a,b) \colon  (\PP', T',X',Y')\to (\PP, T,X,Y)$ un morphisme de $d$-cadres lisses en dehors du diviseur
tel que 
$a$ soit propre, 
$Y'= a  ^{-1} (Y)$
et $b$ soit lisse.
Soit $\E' $ un objet de 
$(F\text{-})D ^\mathrm{b} _\mathrm{isoc}  (\PP', T', X'/K)$.
Alors $f _{+} (\E')$ est un objet de $(F\text{-})D ^\mathrm{b} _\mathrm{isoc}  (\PP, T, X/K)$.
\end{theo}

\begin{proof}
0) Quitte à utiliser la deuxième suite spectrale d'hypercohomologie du foncteur
$f _+$, on peut supposer que $\E' \in (F\text{-})\mathrm{Isoc} ^{\dag \dag} (\PP', T', X'/K)$.
D'après \ref{indt-X}, on peut supposer que $Y'$ est dense dans $X'$.
En outre, quitte à considérer la décomposition en somme directe de $\E'$ sur les composantes irréductibles de $Y'$, 
on se ramène facilement au cas où $Y'$ est intègre. 

1) Dans un premier temps, on se ramène au cas où $f$ est lisse et $T'= f ^{-1} (T)$. 
Posons $\PP'':= \PP' \times \PP$, $q\colon \PP''\to \PP$ et $q'\colon \PP''\to \PP'$ les projections canoniques.
Le morphisme $f$ se décompose en le graphe de $f$ noté
$\gamma _f\colon  \PP' \hookrightarrow \PP' \times \PP$
suivi de la projection $q$.  Via $\gamma _f$, $X'$ est aussi un sous-schéma fermé de $\PP''$.
D'après \cite[5.4.1]{caro-pleine-fidelite} (utilisé pour les morphismes identités), 
comme $Y' = X' \setminus q ^{-1} (T)=X' \setminus q ^{\prime -1} (T')$,
on a l'égalité $  (F\text{-})\mathrm{Isoc} ^{\dag \dag} (\PP'', q ^{\prime -1} (T'), X'/K)=  (F\text{-})\mathrm{Isoc} ^{\dag \dag} (\PP'', q ^{ -1} (T), X'/K)$.
Toujours d'après \cite[5.4.1]{caro-pleine-fidelite} (utilisé pour $\theta = (\gamma _{f}, Id)$),
comme $\gamma _{f} ^{-1} ( q ^{\prime -1} (T'))=T'$ est un diviseur, 
le foncteur $\gamma _{f, +}$ induit 
une équivalence de catégories entre 
$(F\text{-})\mathrm{Isoc} ^{\dag \dag} (\PP', T', X'/K)$
et
$  (F\text{-})\mathrm{Isoc} ^{\dag \dag} (\PP'', q ^{\prime -1} (T'), X'/K)$. 
Les catégories $ (F\text{-})\mathrm{Isoc} ^{\dag \dag} (\PP'', q ^{ -1} (T), X'/K)$
et 
$(F\text{-})\mathrm{Isoc} ^{\dag \dag} (\PP', T', X'/K)$
sont donc canoniquement équivalentes. 
Quitte à remplacer $\PP'$ par $\PP''$ et $T'$ par $q ^{ -1} (T)$, 
on peut donc supposer $T' = f ^{-1}(T)$ et $f$ est lisse.

2) D'après \cite[5.3.1.2]{caro-pleine-fidelite}, 
comme $u\circ a $ est propre, 
on en déduit que 
$f _+ (\E') \in (F\text{-})D ^\mathrm{b} _\mathrm{surcoh} (\D ^\dag _{\PP} (\hdag T) _\Q)$.

3) Notons $\U := \PP \setminus T$ et $\U':=\PP' \setminus T'$.
Le morphisme induit $g\colon \U' \to \U$ est un prolongement du morphisme propre et lisse $Y'\to Y$ via les immersions fermées $Y\hookrightarrow \U$, $Y'\hookrightarrow \U'$.
Grâce à \ref{b+isoccv-pl},
comme $ \E'|\U' \in (F\text{-})\mathrm{Isoc} ^{\dag \dag} (\U', Y'/K)$,
comme $f _+ (\E')|\U \riso g _{+} ( \E'|\U')$, il en résulte que les espaces de cohomologie de $f _+ (\E')|\U$ proviennent
de $(F\text{-})$isocristaux convergents sur $Y$, i.e.
$f _+ (\E')|\U  \in (F\text{-})D ^\mathrm{b} _\mathrm{isoc}  (\U, Y/K)$. D'après \cite[2.2.12]{caro_courbe-nouveau},  
comme aussi d'après l'étape $2$ on sait 
$f _+ (\E') \in (F\text{-})D ^\mathrm{b} _\mathrm{coh} (\D ^\dag _{\PP} (\hdag T) _\Q)$, 
cela implique que les espaces de cohomologie de 
$f _+ (\E')$ proviennent de $(F\text{-})$isocristaux sur $Y$ surconvergents le long de $X \setminus Y$.
\end{proof}

\section{Catégories sur les couples de variétés et préservation de la surcohérence}

Nous construisons dans ce chapitres des catégories de $\D$-modules arithmétiques 
sur des couples de variétés propremement $d$-réalisables. 
Puis nous traduisons le théorème de préservation de la surcohérence établie 
dans \ref{b+proprelisse-varlisse-gen} dans ce nouveau $d$-cadre (e.g., voir \ref{b+proprelisse}). 

\subsection{Definitions}

 \begin{defi}
\label{defi-(d)plong}
On définit la catégorie des couples de $k$-variétés $d$-réalisables de la manière suivante: 
Soient $X$ une $k$-variété et $Y$ un ouvert de $X$.  
Le couple $(Y,X)$ est un {\og couple de $k$-variétés $d$-réalisable \fg} 
s'il existe un $d$-cadre de la forme $(\PP, T,X,Y)$ 
(voir les conventions de \ref{defi-cad}).

Soient $(Y',X')$ et $(Y,X)$ deux couples de $k$-variétés $d$-réalisables.
Un morphisme $(b,a)\colon (Y',X')\to (Y,X)$ de couples de $k$-variétés $d$-réalisables est un morphisme de variétés
$a \colon X'\to X$ induisant la factorisation $b\colon Y' \to Y$.
On pourra noter abusivement $a$ pour $(b,a)$.
\end{defi}

On aura besoin de la notion un peu plus restrictive :
\begin{defi}
\label{defi-(d)plongprop}
La catégorie des $k$-variétés proprement $d$-réalisables 
est la sous-catégorie pleine de la catégorie des $k$-variétés $d$-réalisables
dont les objets sont les couples 
$(Y,X)$ tels qu'il existe
un $\V$-schéma formel $\PP $ propre et lisse,
un diviseur $T$ de $P$ et une immersion (non nécessairement fermée) $X \hookrightarrow \PP$
vérifiant $Y = X \setminus T$.

Un morphisme $(b,a)\colon (Y',X') \to (Y,X)$ de couples de $k$-variétés $d$-réalisables 
(resp. de couples de $k$-variétés proprement $d$-réalisables) est {\og complet\fg} (resp. {\og propre\fg}) 
lorsque le morphisme sous-jacent $a$ est propre (resp. $a$ et $b$ sont propres).
\end{defi}

 Complétons à présent la définition de \cite[5.4.4]{caro-pleine-fidelite}.

\begin{defi}
\label{defi-var-dplong}
    \begin{itemize}
\item Une $k$-variété $Y$ est {\og $d$-réalisable\fg}  
s'il existe un couple de $k$-variétés $d$-réalisables de la forme $(Y,X)$ avec $X$ propre. 

\item Une $k$-variété $Y$ est {\og proprement $d$-réalisable\fg} 
s'il existe un couple de $k$-variétés proprement $d$-réalisables de la forme $(Y,X)$ avec $X$ propre. 
 \end{itemize}
\end{defi}

\subsection{Indépendance par rapport au schéma formel}

Traitons d'abord à part le cas cohérent:
\begin{prop}
\label{nota-Dsurcv-cpart}
  Soient $X$ une $k$-variété lisse et $Y$ un ouvert de $X$ tels que $(Y,X)$ soit proprement $d$-réalisable (\ref{defi-(d)plongprop}).
Choisissons $\widetilde{\PP} $ un $\V$-schéma formel propre et lisse,
$\widetilde{T}$ un diviseur de $\widetilde{P}$ tels qu'il existe une immersion $X \hookrightarrow \widetilde{\PP}$
induisant l'égalité $Y = X \setminus \widetilde{T}$. 
Choisissons $\PP$ un ouvert de $\widetilde{\PP}$ contenant $X$ tel que
l'immersion $X \hookrightarrow \PP$ canoniquement induite soit fermée. On note alors $T := P \cap \widetilde{T}$ le diviseur de $P$ induit. 

Les catégories 
$(F\text{-})D ^\mathrm{b} _\mathrm{coh}  (\PP, T, X/K)$
ne dépendent pas, à isomorphisme canonique près, 
du choix du $\V$-schéma formel propre et lisse $\widetilde{\PP}$, de l'ouvert $\PP$ de $\widetilde{\PP}$, de l'immersion fermée $X \hookrightarrow \PP$ et du diviseur $\widetilde{T}$ de $\widetilde{P}$ tels 
$T = P \cap \widetilde{T}$ et $Y =X \setminus \widetilde{T}$
(mais seulement de $(Y,X)/K$).
On la notera alors respectivement sans ambiguïté 
$(F\text{-})D ^\mathrm{b} _\mathrm{coh} (\D ^\dag _{(Y,X)/K}) $.
Ses objets sont appelés les {\og $(F\text{-})$complexes cohérents de $\D ^\dag _{(Y,X)/K}$-modules \fg}
ou {\og les $(F\text{-})$complexes cohérents de $\D$-modules arithmétiques sur $(Y,X)/K$ \fg}.
\end{prop}

\begin{proof}
Faisons un tel second choix : soient $\widetilde{\PP}' $ un $\V$-schéma formel propre et lisse,
$\PP '$ un ouvert de $\widetilde{\PP}' $,
$\widetilde{T}'$ un diviseur de $\widetilde{P}'$, 
tels qu'il existe une immersion fermée $X \hookrightarrow \PP'$
induisant l'égalité $Y = X \setminus \widetilde{T}'$. 
Posons $T':=\widetilde{T}' \cap P'$   
et
$\widetilde{\PP}'':=\widetilde{\PP}\times \widetilde{\PP}'$.
Soient $\widetilde{q}\colon \widetilde{\PP}''\to \widetilde{\PP}$,
$\widetilde{q}'\colon \widetilde{\PP}''\to \widetilde{\PP}'$ les projections (propres et lisses) canoniques, 
$\widetilde{T}'' _{1}:= \widetilde{q} ^{-1} (\widetilde{T})$, $\PP '' _{1} := \widetilde{q} ^{-1} (\PP)$, $T'' _{1}:=\widetilde{T}'' _{1}\cap \PP '' _{1}$,
$\widetilde{T}'' _{2}:= \widetilde{q} ^{\prime -1} (\widetilde{T}')$, $\PP '' _{2} := \widetilde{q} ^{\prime -1} (\PP')$, $T'' _{2}:=\widetilde{T}'' _{2}\cap \PP '' _{2}$,
$\widetilde{T}'' := \widetilde{T}'' _{1} \cup \widetilde{T}'' _{2}$, 
$\PP '':= \PP '' _{1} \cap \PP '' _{2}$.
Notons $q\colon \PP'' _{1} \rightarrow \PP$ le morphisme induit $\widetilde{q}$.
D'après \cite[5.4.3]{EGAI}, le graphe de l'immersion $X \hookrightarrow P'$ 
est fermé. Comme l'immersion diagonale $X \hookrightarrow \PP '' _{1}$ se décompose 
en $X \hookrightarrow X \times \widetilde{\PP}' \hookrightarrow \PP \times \widetilde{\PP}' = \PP '' _{1}$,
celle-ci est donc fermée. 
De plus, $Y = X \setminus T'' _{1}$.  
Comme $q$ est propre, 
il résulte alors de \ref{coh-PXTindtPbis}.\ref{coh-PXTindtP-iv} que
les foncteurs $\R \underline{\Gamma} ^\dag _X  q ^! $ et
$q _+$ induisent des équivalences quasi-inverses 
entre les catégories 
$(F\text{-})D ^\mathrm{b} _\mathrm{coh}  (\PP, T, X/K)$ 
et 
$(F\text{-})D ^\mathrm{b} _\mathrm{coh}  (\PP''_{1}, T'' _{1}, X/K)$.
De même, comme on dispose de l'immersion fermée $X \hookrightarrow \PP ''$ et comme $Y = X \setminus T''$, en appliquant \ref{coh-PXTindtPbis}.\ref{coh-PXTindtP-iv} à l'immersion ouverte 
$\PP '' \subset \PP '' _{1}$, on obtient des équivalences quasi-inverses 
entre les catégories 
$(F\text{-})D ^\mathrm{b} _\mathrm{coh}  (\PP'', T'', X/K)$ 
et 
$(F\text{-})D ^\mathrm{b} _\mathrm{coh}  (\PP''_{1}, T'' _{1}, X/K)$.
Ainsi, les catégories 
$(F\text{-})D ^\mathrm{b} _\mathrm{coh}  (\PP'', T'', X/K)$ 
et 
$(F\text{-})D ^\mathrm{b} _\mathrm{coh}  (\PP, T, X/K)$
sont canoniquement équivalentes.
De manière symétrique (i.e. on remplace $1$ par $2$), on vérifie 
que 
les catégories 
$(F\text{-})D ^\mathrm{b} _\mathrm{coh}  (\PP'', T'', X/K)$ 
et 
$(F\text{-})D ^\mathrm{b} _\mathrm{coh}  (\PP', T', X/K)$
sont canoniquement équivalentes.
D'où le résultat.

\end{proof}

Lorsque l'on remplace la notion de cohérence par celle de surcohérence ou de surholonomie,
nous utiliserons 
la stabilité de ces deux notions par les opérations cohomologiques (e.g. image directe, image inverse extraordinaire)
pour obtenir une indépendance telle que celle de la proposition \ref{nota-Dsurcv-cpart}
sans supposer que la compactification partielle $X$ est lisse. 
La première étape, est d'améliorer le précédent lemme \ref{lemme-coh-PXTindtP} en supprimant l'hypothèse de lissité de $X$.
 On répond donc positivement à la conjecture énoncée dans \cite[3.2.5]{caro_surcoherent}:
\begin{lemm}
[Indépendance par rapport au diviseur]
\label{lemme-coh-PXTindtPGEN}
Soient $\PP$ un $\V$-schéma formel séparé et lisse, $T, T'$ deux diviseurs de $P$,
$X$ un sous-schéma fermé de $P$ tels que $T\cap X= T'\cap X$. 
Posons
$\mathfrak{C}=(F\text{-})\mathrm{Surcoh}$
ou 
$\mathfrak{C}=(F\text{-})\mathrm{Surhol}$
ou 
$\mathfrak{C}=
(F\text{-})D ^\mathrm{b} _\mathrm{surcoh} $
ou 
$\mathfrak{C}=
(F\text{-})D ^\mathrm{b} _\mathrm{surhol} $.
Lorsque $X \setminus T$ est lisse, on pose aussi  
$\mathfrak{C}=(F\text{-})\mathrm{Isoc}^{\dag \dag}$
ou
$\mathfrak{C}=
(F\text{-})D ^\mathrm{b} _\mathrm{isoc} $.
On obtient alors les égalités 
$\mathfrak{C} (\PP, T, X/K)=\mathfrak{C}  (\PP, T', X/K)$. 
\end{lemm}

\begin{proof}
Le cas surholonome étant déjà traité dans \cite{caro_surholonome}, 
contentons-nous de prouver les autres cas.
Quitte à considérer $T \cup T'$, on peut supposer que $T \supset T'$.
Soit 
$\FF \in \mathfrak{C} (\PP, T', X/K)$.
Comme $\R \underline{\Gamma} ^\dag _{X} (\FF) \riso \FF$, on obtient les isomorphismes
$$\R \underline{\Gamma} ^\dag _{T} (\FF)
\riso 
\R \underline{\Gamma} ^\dag _{X\cap T} (\FF)
=
\R \underline{\Gamma} ^\dag _{X\cap T'} (\FF)
\riso
\R \underline{\Gamma} ^\dag _{T'} (\FF)
=0.$$
On en déduit que la flèche canonique
$\FF  \to \FF(\hdag T) $
est un isomorphisme.
On a donc vérifié que l'on dispose de l'inclusion:
$\mathfrak{C} (\PP, T', X/K)\subset \mathfrak{C}  (\PP, T, X/K)$ (en effet, 
un $\smash{\D} ^{\dag} _{\X } (\hdag T ) _{\Q}$-module qui est $\smash{\D} ^{\dag} _{\X } (\hdag T ') _{\Q}$-surcohérent
est $\smash{\D} ^{\dag} _{\X } (\hdag T ) _{\Q}$-surcohérent ; c'est la réciproque qui est fausse en générale).

Vérifions à présent l'inclusion inverse (qui est moins immédiate que pour \cite[3.2.4]{caro_surcoherent}). 
Les autres cas en résultant aisément, il suffit 
de traiter le cas où $\mathfrak{C}=(F\text{-})D ^\mathrm{b} _\mathrm{surcoh}$.
Par \ref{indt-X}, quitte à changer $X$, 
on peut supposer $X \setminus T$ dense dans $X$.
Soit $\E \in \mathfrak{C} (\PP, T, X/K)$.
On procède par récurrence lexicographique sur la dimension de $X$ et le nombre des composantes irréductibles de $X$ dimension $\dim X$.
Comme la surcohérence est préservée par les foncteurs espace de cohomologie (i.e. de la forme $\mathcal{H} ^{l}$),
on se ramène à valider le lemme lorsque 
$\mathfrak{C}=(F\text{-})\mathrm{Surcoh}$.
D'après \cite[6.2.1]{caro-pleine-fidelite},   
il existe un diviseur $\widetilde{T}\supset T$ de $P$ tel que,
  en notant $\widetilde{Y} := X \setminus \widetilde{T}$,
  $\widetilde{Y}$ soit intègre, lisse, inclus et dense dans une composante irréductible de $X$ de dimension $\dim X$ et
  $\widetilde{\E}:=\E (\hdag \widetilde{T}) \in (F\text{-})\mathrm{Isoc} ^{\dag \dag}( \PP, \widetilde{T}, X/K)$.
  Notons $\widetilde{X}$ l'adhérence de $\widetilde{Y}$ dans $X$. On obtient 
$  (F\text{-})\mathrm{Isoc} ^{\dag \dag}( \PP, \widetilde{T}, X/K) =  (F\text{-})\mathrm{Isoc} ^{\dag \dag}( \PP, \widetilde{T}, \widetilde{X}/K)$
(voir \ref{indt-X}).

D'après \cite[5.3.1]{caro-pleine-fidelite} (utilisé lorsque le morphisme $f$ est l'identité),
il existe un diagramme commutatif de la forme
  \begin{equation}
  \label{diag-casliss631dev}
  \xymatrix {
  {\widetilde{X}'} \ar[r] ^-{u'} \ar[d] _{a '} & {\P ^N _{P}} \ar[r] \ar[d] &
  {\P ^N _{\PP}} \ar[d] ^-{q} \\
  {\widetilde{X}} \ar[r] ^-{u} & {P} \ar[r] & {\PP}
   }
\end{equation}
  où $u$ est l'immersion fermée canonique, 
  $\widetilde{X}'$ est lisse sur $k$, $q$ est la projection canonique, $u'$ est une immersion fermée, 
  $a  ^{\prime -1} (T\cap \widetilde{X})$ est un diviseur à croisements normaux strict de $\widetilde{X}'$,
  $a '$ est propre, surjectif, génériquement fini et étale. 
  Posons $\widetilde{T}':= q ^{-1} (\widetilde{T})$, $T'':=q ^{-1} (T')$, 
  $\E':= \R \underline{\Gamma} _{\widetilde{X}'} ^\dag \circ q ^{!} (\E)$
  et
  $\widetilde{\E}':= \R \underline{\Gamma} _{\widetilde{X}'} ^\dag \circ q ^{!} (\widetilde{\E})$.
Or, d'après le cas de la compactification lisse déjà traité via le lemme \ref{lemme-coh-PXTindtP},
on a l'égalité
$\E' \in 
(F\text{-})D ^\mathrm{b} _\mathrm{surcoh}  (\P ^N _{\PP}, q ^{-1} (T), \widetilde{X}'/K)
=(F\text{-})D ^\mathrm{b} _\mathrm{surcoh} (\P ^N _{\PP},T'' , \widetilde{X}'/K)$.
Comme $\widetilde{\E}' \riso \E' (\hdag \widetilde{T}')$, il en résulte que 
$\widetilde{\E}'\in (F\text{-})D ^\mathrm{b} _\mathrm{surcoh} (\P ^N _{\PP},T'' , \widetilde{X}'/K)$.
Par stabilité de la surcohérence par l'image directe d'un morphisme propre, 
il en résulte 
$q  _+(\widetilde{\E}')\in (F\text{-})D ^\mathrm{b} _\mathrm{surcoh}  (\PP, T', \widetilde{X}/K)$.
Or, d'après  \cite[5.3.1]{caro-pleine-fidelite}, 
$\widetilde{\E}$ est un facteur direct de $q  _+(\widetilde{\E}')$.
On en déduit que 
$\widetilde{\E}\in (F\text{-})D ^\mathrm{b} _\mathrm{surcoh}  (\PP, T', \widetilde{X}/K)
\subset (F\text{-})D ^\mathrm{b} _\mathrm{surcoh}  (\PP, T',X/K)$.
Or 
$\R \underline{\Gamma} ^\dag _{\widetilde{T}} (\E) \in 
(F\text{-})D ^\mathrm{b} _\mathrm{surcoh}  (\PP, T, X\cap \widetilde{T}/K)
=(F\text{-})D ^\mathrm{b} _\mathrm{surcoh}  (\PP, T',X \cap \widetilde{T}/K) 
\subset
(F\text{-})D ^\mathrm{b} _\mathrm{surcoh}  (\PP, T', X /K) $, 
l'égalité découlant de l'hypothèse de récurrence.
On conclut via le triangle de localisation de $\E$ par rapport à $\widetilde{T}$.

\end{proof}

\begin{lemm}
\label{gen-coh-PXTindtPsurhol}
Soit $\theta =(f, a,Id) \colon  (\PP', T',X',Y)\to (\PP, T,X,Y)$ un morphisme de $d$-cadres 
tel que $Y$ soit dense dans $X'$, $a$ soit propre et $f ^{-1} (T)$ soit un diviseur de $P'$.

\begin{enumerate}

\item  \label{gen-coh-PXTindtPsurhol-i-fact} 
On dispose des deux factorisations de la forme
$\R \underline{\Gamma} ^\dag _{X'}  f ^! 
\colon 
(F\text{-})D ^\mathrm{b} _\mathrm{surcoh}  (\PP, T, X/K)
 \to
(F\text{-})D ^\mathrm{b} _\mathrm{surcoh}  (\PP', T', X'/K)$
et
$f _+\colon 
(F\text{-})D ^\mathrm{b} _\mathrm{surcoh}  (\PP', T', X'/K)
 \to
(F\text{-})D ^\mathrm{b} _\mathrm{surcoh}  (\PP, T, X/K)$.
De même en remplaçant l'indice {\og surcoh\fg} 
par respectivement {\og surhol\fg} ou {\og isoc\fg}.

\item \label{gen-coh-PXTindtPsurhol-i} 
Pour tout $\E \in (F\text{-})\mathrm{Surcoh}
(\PP, T, X/K)$,
pour tout 
$\E '\in (F\text{-})\mathrm{Surcoh}
(\PP', T', X'/K)$,
pour tout $j \in \Z\setminus \{0\}$,
$$\mathcal{H} ^j (\R \underline{\Gamma} ^\dag _{X'} f ^! (\E) ) =0,
\mathcal{H} ^j (f_+(\E')) =0.$$
De même en remplaçant {\og $\mathrm{Surcoh}$\fg} 
par {\og $\mathrm{Surhol}$\fg}.

\item \label{gen-coh-PXTindtPsurhol-iii} 
On suppose $Y$ lisse. 
De plus, 
si $f$ est propre 
ou si $f$ est une immersion ouverte alors les foncteurs $\R \underline{\Gamma} ^\dag _{X'}  f ^! $ et
$f _+$ induisent des équivalences quasi-inverses entre
 $(F\text{-})D ^\mathrm{b} _\mathrm{isoc}  (\PP, T, X/K)$
et $(F\text{-})D ^\mathrm{b} _\mathrm{isoc}  (\PP', T', X'/K)$.

\item \label{gen-coh-PXTindtPsurhol-ii} 
De plus, si $f$ est propre ou si $f$ est une immersion ouverte alors les foncteurs $\R \underline{\Gamma} ^\dag _{X'}  f ^! $ et
$f _+$ induisent des équivalences quasi-inverses entre
 $(F\text{-})D ^\mathrm{b} _\mathrm{surcoh}  (\PP, T, X/K)$ et $(F\text{-})D ^\mathrm{b} _\mathrm{surcoh}  (\PP', T', X'/K)$.
De même en remplaçant les indices {\og $\mathrm{surcoh}$\fg} 
par {\og $\mathrm{surhol}$\fg}.

\end{enumerate}
\end{lemm}

\begin{proof}
0) Comme $a$ est propre, l'immersion ouverte $Y \hookrightarrow X '\setminus f ^{-1} (T) $ est aussi fermée.
Comme $Y$ est dense dans $X'$, on en déduit $X '\setminus f ^{-1} (T) =Y$. 
Ainsi, $f ^{-1} (T)$ est un diviseur de $P'$ tel que $X '\setminus f ^{-1} (T) =Y$.
Grâce au lemme \ref{lemme-coh-PXTindtPGEN}, on se ramène à supposer $T'= f ^{-1} (T)$.

1) 
Les factorisations de \ref{gen-coh-PXTindtPsurhol}.\ref{gen-coh-PXTindtPsurhol-i-fact} 
découlent alors de \ref{surhol-conjA} dans les cas surcohérent ou surholonome
et de \cite[5.4.1]{caro-pleine-fidelite}
dans le dernier cas.

2) Vérifions à présent les annulations de \ref{gen-coh-PXTindtPsurhol}.\ref{gen-coh-PXTindtPsurhol-i}. 
D'après \cite[4.3.12]{Be1}, on se ramène au cas où $T$ et donc $T'$ sont vides. 
Il s'agit alors de reprendre mot pour mot (le début de) la preuve de \cite[3.2.6]{caro_surcoherent}
en y constatant que l'hypothèse que le morphisme soit propre et lisse est inutile grâce au théorème \ref{surhol-conjA} 
qui généralise \cite[3.1.9]{caro_surcoherent}).

3) En supposant $Y$ lisse, prouvons à présent \ref{gen-coh-PXTindtPsurhol}.\ref{gen-coh-PXTindtPsurhol-iii}.
On se ramène facilement au cas où la compactification partielle est lisse, cas traité dans  \ref{coh-PXTindtPbis}.\ref{coh-PXTindtP-iv}.
En effet, soient 
$\E\in (F\text{-})D ^\mathrm{b} _\mathrm{isoc}  (\PP, T, X/K)$
et 
$\E'\in (F\text{-})D ^\mathrm{b} _\mathrm{isoc}  (\PP', T', X'/K)$.
Notons $\U:= \PP \setminus T$, $\U':= \PP' \setminus T'$.

3.i) 
Traitons d'abord le cas où $f$ est propre et lisse.
Comme $f$ est propre, on dispose alors par adjonction des morphismes canoniques 
$f_+ \circ \R \underline{\Gamma} ^\dag _{X'}  f ^!  (\E) \to \E $, 
$\E' \to \R \underline{\Gamma} ^\dag _{X'}  f ^!  \circ f_+ (\E') $.
D'après \cite[4.3.12]{Be1}, pour vérifier que ces morphismes sont des isomorphismes, 
il suffit de l'établir respectivement en dehors des diviseurs $T$ et $T'$,
ce qui nous ramène au cas de la compactification partielle lisse
qui a été traité dans la preuve de \ref{coh-PXTindtPbis}.\ref{coh-PXTindtP-iv}.

3.ii)
Supposons à présent que $f$ soit une immersion ouverte.
Via \ref{gen-coh-PXTindtPsurhol}.\ref{gen-coh-PXTindtPsurhol-i},
on obtient alors les isomorphismes canoniques 
$\R \underline{\Gamma} ^\dag _{X'}\circ   f ^! (\E) \riso f ^{*} (\E)=f ^{-1} (\E)$ et $f _{*} (\E')\riso f _+(\E')$.
On dispose de plus des morphismes d'adjonction :
$\E \to f _{*} f ^{-1} (\E)$ et $f ^{-1}f _{*} (\E') \to  \E'$, dont le dernier est toujours un isomorphisme. 
Il  résulte de \cite[4.3.12]{Be1} que ces morphismes sont des isomorphismes (car ils le sont en dehors des diviseurs, ce qui nous ramène à la preuve de \ref{coh-PXTindtPbis}.\ref{coh-PXTindtP-iv}).

4)  
Vérifions à présent \ref{gen-coh-PXTindtPsurhol}.\ref{gen-coh-PXTindtPsurhol-ii}  
dans le cas surcohérent. On se ramène au cas où $Y$ est lisse par dévissage.
En effet, soient 
$\E \in (F\text{-})D ^\mathrm{b} _\mathrm{surcoh}  (\PP, T, X/K)$ et 
$\E'\in (F\text{-})D ^\mathrm{b} _\mathrm{surcoh}  (\PP', T', X'/K)$.
Par \ref{indt-X}, comme aucun des foncteurs ne fait intervenir $X$, 
quitte à remplacer $X$ par l'adhérence de $Y$ dans $X$, 
on peut supposer $Y$ dense dans $X$.
D'après \cite[6.2.3]{caro-pleine-fidelite}, 
il existe des diviseurs $T _1, \dots,T _{r}$ contenant $T$ avec $T _r =T$ tels que,
en notant $T _0: =X$ et, 
pour $i=0,\dots , r-1$, $X _{i}=T _0 \cap \dots \cap T _i $, 
le schéma $Y _i:= X _{i}\setminus T _{i+1}$
est lisse et 
$  \mathcal{H} ^j (\R \underline{\Gamma} ^\dag _{X _i}  (\hdag T _{i+1}) (\E) )
  \in (F\text{-})\mathrm{Isoc} ^{\dag \dag}( \PP, T _{i+1}, X _i/K)
  =(F\text{-})\mathrm{Isoc} ^{\dag \dag}( Y_i,X _{i}/K) $, pour tout entier $j$.
  De même pour $\E'$.
Il en résulte par dévissage que les morphismes canoniques d'adjonction (voir les étapes $3.i)$ et $3.ii)$)
sont des isomorphismes. D'où le résultat pour le cas surcohérent.
Le cas surholonome se déduit du cas surcohérent.

\end{proof}

\begin{rema}
En procédant de manière analogue à l'étape $1)$ de la preuve de \ref{b+proprelisse-varlisse-gen},
les résultats de \ref{gen-coh-PXTindtPsurhol}.\ref{gen-coh-PXTindtPsurhol-i-fact}
restent valables sans l'hypothèse que 
$f ^{-1} (T)$ soit un diviseur de $P'$.
\end{rema}

\begin{prop}
\label{prop-nota-Dsurcv}  

Soient $X$ une $k$-variété et $Y$ un ouvert de $X$ tels que $(Y,X)$ soit proprement $d$-réalisable (\ref{defi-(d)plongprop}).

Choisissons $\widetilde{\PP} $ un $\V$-schéma formel propre et lisse,
$\widetilde{T}$ un diviseur de $\widetilde{P}$ tels qu'il existe une immersion $X \hookrightarrow \widetilde{\PP}$
induisant l'égalité $Y = X \setminus \widetilde{T}$. 
Choisissons $\PP$ un ouvert de $\widetilde{\PP}$ contenant $X$ tel que
l'immersion $X \hookrightarrow \PP$ canoniquement induite soit fermée. On note alors $T := P \cap \widetilde{T}$ le diviseur de $P$ induit. 
\begin{itemize}

\item Posons
$\mathfrak{C}=(F\text{-})\mathrm{Surcoh}$
ou 
$\mathfrak{C}=(F\text{-})\mathrm{Surhol}$
ou 
$\mathfrak{C}=
(F\text{-})D ^\mathrm{b} _\mathrm{surcoh} $
ou 
$\mathfrak{C}=
(F\text{-})D ^\mathrm{b} _\mathrm{surhol} $.
La catégorie $\mathfrak{C} (\PP, T, X/K)$
ne dépend pas, à isomorphisme canonique près, des choix effectués (mais seulement de $(Y,X)/K$).

\item  Lorsque $Y$ est lisse,  la catégorie $(F\text{-})D ^\mathrm{b} _\mathrm{isoc}  (\PP, T, X/K)$  ne dépend de même canoniquement que de $(Y,X)/K$.
\end{itemize}
\end{prop}

\begin{proof}
Il s'agit alors de reprendre les arguments de \ref{nota-Dsurcv-cpart} en remplaçant la référence 
\ref{coh-PXTindtPbis}.\ref{coh-PXTindtP-iv} par
\ref{gen-coh-PXTindtPsurhol} (qui n'est d'ailleurs utilisé ici que dans le cas où $X'=X$).
\end{proof}

\begin{nota}
\label{nota-Dsurcv}  
On garde les notations de \ref{prop-nota-Dsurcv}.
\begin{itemize}

\item
La catégorie $\mathfrak{C} (\PP, T, X/K)$
se notera alors sans ambiguïté 
$\mathfrak{C} (\D ^\dag _{(Y,X)/K}) $.
Les objets de
$(F\text{-})D ^\mathrm{b} _\mathrm{surcoh} (\D ^\dag _{(Y,X)/K}) $
(resp. $(F\text{-})D ^\mathrm{b} _\mathrm{surhol} (\D ^\dag _{(Y,X)/K}) $)
sont les {\og $(F\text{-})$complexes surcohérents (resp. surholonomes) de $\D ^\dag _{(Y,X)/K}$-modules\fg}
ou {\og les $(F\text{-})$complexes surcohérents (resp. surholonomes) de $\D$-modules arithmétiques sur $(Y,X)/K$ \fg}.
De même, pour les modules. 

\item  Lorsque $Y$ est lisse, la catégorie $(F\text{-})D ^\mathrm{b} _\mathrm{isoc}  (\PP, T, X/K)$ sera notée
$(F\text{-})D ^\mathrm{b} _\mathrm{isoc} (\D ^\dag _{(Y,X)/K})$.
Lorsque $Y=X$, on notera $(F\text{-})D ^\mathrm{b} _\mathrm{cv} (\D ^\dag _{Y/K})$ au lieu de 
$(F\text{-})D ^\mathrm{b} _\mathrm{isoc} (\D ^\dag _{(Y,Y)/K})$.
On remarque, grâce à \cite{caro-Tsuzuki-2008},
qu'avec une structure de Frobenius on obtient l'inclusion canonique
$F\text{-}D ^\mathrm{b} _\mathrm{isoc} (\D ^\dag _{(Y,X)/K}) 
\subset
F\text{-}D ^\mathrm{b} _\mathrm{surhol} (\D ^\dag _{(Y,X)/K}) $.

\end{itemize}
\end{nota}

\begin{rema}
\label{rema-surcv-si-cv}
On reprend les notations de \ref{nota-Dsurcv}. 
\begin{itemize}
\item  Le foncteur pleinement fidèle $(F\text{-})\mathrm{Surcoh} (\PP, T, X/K) \to  (F\text{-})D ^\mathrm{b} _\mathrm{surcoh}  (\PP, T, X/K)$
ne dépend pas des choix effectués.
On obtient ainsi le foncteur canonique pleinement fidèle 
$(F\text{-})\mathrm{Surcoh} (Y, X/K)\to (F\text{-})D ^\mathrm{b} _\mathrm{surcoh} (\D ^\dag _{(Y,X)/K}) $.

\item Supposons $Y$ est lisse. Posons $\U:=\PP\setminus T$.
Un module $\E$ de $(F\text{-})\mathrm{Surcoh} (\PP, T, X/K)$ appartient à $(F\text{-})\mathrm{Isoc}^{\dag \dag} (\PP, T, X/K)$ 
si et seulement si $\E |\U \in (F\text{-})\mathrm{Isoc}^{\dag \dag} (\U, Y/K)$
(cela résulte de la caractérisation de Berthelot énoncée dans \cite[2.2.12]{caro_courbe-nouveau}). 
On en déduit qu'un $F$-complexe $\E$ de 
$(F\text{-})D ^\mathrm{b} _\mathrm{surcoh} (\D ^\dag _{(Y,X)/K})$ appartient à 
$(F\text{-})D ^\mathrm{b} _\mathrm{isoc} (\D ^\dag _{(Y,X)/K})$ si et seulement si 
$\E |(Y,Y)$ appartient à $(F\text{-})D ^\mathrm{b} _\mathrm{isoc} (\D ^\dag _{(Y,Y)/K})$.

\end{itemize}

\end{rema}

\begin{lemm} 
\label{lemm-morph-couple}
Soit  $a \colon (Y',X') \to (Y,X)$ un morphisme de couples de $k$-variétés proprement $d$-réalisables.
Il existe alors un diagramme commutatif de la forme :
\begin{equation}
  \label{diagdefi-morph-couple}
  \xymatrix {
  {X'} \ar[r] ^-{u'} \ar[d] _{a } & {\PP'} \ar[r] ^-{j'} \ar[d] ^{f}&
  {\widetilde{\PP}'} \ar[d] ^{\widetilde{f}}   \\
  {X} \ar[r] ^-{u} & {\PP} \ar[r] ^-{j}& {\widetilde{\PP},} 
  }
  \end{equation}
où $\widetilde{f}$ est un morphisme lisse de $\V $-schémas formels propres et lisses, 
où $u$ et $u ' $ sont des immersions fermées, 
où $j$ et $j'$ sont des immersions ouvertes, 
tels qu'il existe un diviseur $\widetilde{T}$ de $\widetilde{P}$ (resp. un diviseur $\widetilde{T}'$ de $\widetilde{P}'$)
vérifiant 
$Y= X\setminus \widetilde{T}$ (resp. $Y'= X'\setminus \widetilde{T}'$)
et $\widetilde{T} '\supset \widetilde{f} ^{-1} (\widetilde{T})$.

\begin{itemize}
\item 
Si $Y' = a  ^{-1} (Y)$ alors on peut choisir de plus $\widetilde{T} '=\widetilde{f} ^{-1} (\widetilde{T})$.
\item Si $a $ est propre, le diagramme de droite de \ref{diagdefi-morph-couple} peut alors être choisi cartésien.

\end{itemize}

\end{lemm}

\begin{proof}
Soit $\widetilde{\PP} $ un $\V$-schéma formel propre et lisse,
$\widetilde{T}$ un diviseur de $\widetilde{P}$ tels qu'il existe une immersion $X \hookrightarrow \widetilde{\PP}$
induisant l'égalité $Y = X \setminus \widetilde{T}$. 
On procède de même avec des primes. 
Posons $\widetilde{\PP}'':=\widetilde{\PP}\times \widetilde{\PP}'$, 
$\widetilde{q}\colon  \widetilde{\PP}'' \to \widetilde{\PP}$ et $\widetilde{q}'\colon  \widetilde{\PP}'' \to \widetilde{\PP}'$
les projections canoniques, 
$\widetilde{T} '':= \widetilde{q} ^{\prime -1} (\widetilde{T}') \cup \widetilde{q} ^{-1} (\widetilde{T})$.
On dispose de l'immersion canonique $X' \hookrightarrow \widetilde{\PP}''$ induite par le graphe de $a $ et les immersions $X \hookrightarrow \widetilde{\PP}$, $X '\hookrightarrow \widetilde{\PP}'$.
Comme $Y' = X' \setminus \widetilde{T} ''$, 
quitte à remplacer $\widetilde{\PP}'$ par  $\widetilde{\PP}''$ et 
$\widetilde{T}'$ par  $\widetilde{T}''$, 
on peut supposer qu'il existe
un morphisme $\widetilde{f}\colon \widetilde{\PP}' \rightarrow \widetilde{\PP}$ lisse, prolongeant $a $ et tel que $\widetilde{T} '\supset \widetilde{f} ^{-1} (\widetilde{T})$.
Soit $\PP$ un ouvert de $\widetilde{\PP}$ contenant $X$ tel que
l'immersion $X \hookrightarrow \PP$ canoniquement induite soit fermée. 
Comme $\widetilde{f} ^{-1} (\PP) \supset X'$, il existe un ouvert $\PP'$ inclus dans $\widetilde{f} ^{-1} (\PP) $ contenant $X'$
tel que l'immersion induite $X '\hookrightarrow \PP'$ soit fermée. 
Ainsi, $\widetilde{f}$ induit le morphisme
$f\colon \PP' \to \PP$.

-Lorsque $Y' = a  ^{-1} (Y)$, comme $Y' = X' \setminus ( \widetilde{f} ^{-1} (\widetilde{T}) )$, alors on peut choisir 
$\widetilde{T} '= \widetilde{f} ^{-1} (\widetilde{T})$.

-Enfin, lorsque $a $ est propre, le morphisme $X '\to \PP$ l'est aussi. 
Comme le morphisme $\widetilde{f} ^{-1} (\PP) \rightarrow \PP$ induit par $\widetilde{f}$ est propre, 
il en résulte que l'immersion $X' \hookrightarrow \widetilde{f} ^{-1} (\PP)$ est fermée. Dans ce cas $\PP'$ peut être choisi égal à $\widetilde{f} ^{-1} (\PP) $.

\end{proof}

\begin{prop}
\label{operation-cohomoYXsurhol}
Soit $a \colon (Y',X') \to (Y,X)$ un morphisme de couples de $k$-variétés proprement $d$-réalisables.
\begin{enumerate}
\item On dispose alors du foncteur image inverse extraordinaire par $a $ : 
\begin{equation}
\notag 
a  ^{!} \colon  (F\text{-})D ^\mathrm{b} _\mathrm{surcoh} (\D ^\dag _{(Y,X)/K})
\to 
(F\text{-})D ^\mathrm{b} _\mathrm{surcoh} (\D ^\dag _{(Y',X')/K}).
\end{equation}
\item Lorsque que $a $ est un morphisme propre (voir la définition \ref{defi-(d)plongprop}), on dispose du foncteur image directe par $ a $ :
\begin{equation}
\notag 
a _{+}\colon  (F\text{-})D ^\mathrm{b} _\mathrm{surcoh} (\D ^\dag _{(Y',X')/K})
\to 
(F\text{-})D ^\mathrm{b} _\mathrm{surcoh} (\D ^\dag _{(Y,X)/K}).
\end{equation}

\end{enumerate}

On bénéficie des propriétés analogues pour les complexes surholonomes. 
\end{prop}

\begin{proof}
D'après \ref{lemm-morph-couple}, on dispose alors d'un diagramme de la forme \ref{diagdefi-morph-couple}
satisfaisant aux conditions requises de \ref{lemm-morph-couple} et dont on reprendra les notations.
Notons de plus $T := \widetilde{T} \cap \PP$ et $T ':= \widetilde{T}' \cap \PP'$.
\begin{itemize}
\item  Pour tout $\E \in (F\text{-})D ^\mathrm{b} _\mathrm{surcoh}  (\PP, T, X/K)$, 
 le foncteur $a  ^{!}$ est alors défini en posant 
 $a  ^{!}(\E) := (\hdag T') \circ \R \underline{\Gamma} ^\dag _{X'} \circ f ^{!} (\E)$.
\item  Lorsque $ a $ est propre, pour tout $\E '\in (F\text{-})D ^\mathrm{b} _\mathrm{surcoh}  (\PP', T', X'/K)$, le foncteur $a _{+}$ est alors défini en posant 
 $a _{+} (\E') :=  f _{+} (\E')$. On vérifie que le complexe $ f _{+} (\E')\in (F\text{-})D ^\mathrm{b} _\mathrm{surcoh}  (\PP, T, X/K)$ grâce à \ref{surhol-conjA}.
\item  Par transitivité pour la composition des foncteurs images directes et images inverses extraordinaires, 
via les équivalences canoniques de catégories garantissant l'indépendance canonique de 
$(F\text{-})D ^\mathrm{b} _\mathrm{surcoh} (\D ^\dag _{(Y,X)/K})$ (qui font intervenir ces foncteurs),
on vérifie que la définition des foncteurs $a  ^{!}$  et $a _{+}$ ne dépend pas, à isomorphisme canonique près, 
du choix du diagramme de la forme \ref{diagdefi-morph-couple}. 
\end{itemize}
\end{proof}

\begin{nota}
\label{fonct-restri}
Soit $a \colon (Y',X') \to (Y,X)$ un morphisme de couples de $k$-variétés proprement $d$-réalisables
tel que $X' \to X$ soit une immersion ouverte. 
Remarquons que l'on peut obtenir un diagramme \ref{diagdefi-morph-couple}
satisfaisant aux conditions requises avec $\widetilde{f}=Id$. 
Pour tout $\E \in (F\text{-})D ^\mathrm{b} _\mathrm{surcoh} (\D ^\dag _{(Y,X)/K})$,
on notera $\E | (Y',X'):=a  ^{!}(\E)$ ($= j ^{\prime *} (\E)$).
\end{nota}

\begin{lemm}
\label{a0+comm-restrict}
Soit $a \colon (Y',X') \to (Y,X)$ un morphisme propre de couples de $k$-variétés proprement $d$-réalisables tel que $Y' = a  ^{-1} (Y)$.
Soit $b \,: \,(Y',Y') \to (Y,Y)$ le morphisme induit par $a $.
Pour tout $\E ' \in (F\text{-})D ^\mathrm{b} _\mathrm{surcoh} (\D ^\dag _{(Y',X')/K})$, on dispose de l'isomorphisme canonique 
\begin{equation}
\notag
a _{+} (\E') |(Y,Y) \riso b _{+} (\E'|(Y',Y')).
\end{equation}
\end{lemm}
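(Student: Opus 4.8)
Le plan est de ramener l'\'enonc\'e \`a la commutation de l'image directe avec la restriction \`a un ouvert. Comme $a _0$ est propre et $Y' = a _0 ^{-1}(Y)$, on commencerait par appliquer \ref{lemm-morph-couple} pour obtenir un diagramme de la forme \ref{diagdefi-morph-couple} dans lequel $\widetilde{f}\,:\,\widetilde{\PP}' \to \widetilde{\PP}$ est propre et lisse, le carr\'e de droite est cart\'esien, $\PP' = \widetilde{f} ^{-1}(\PP)$ et $\widetilde{T}' = \widetilde{f} ^{-1}(\widetilde{T})$. En posant $T := P \cap \widetilde{T}$ et $T' := P' \cap \widetilde{T}'$, il en r\'esulte que $f\,:\,\PP' \to \PP$ est propre et que $T' = f ^{-1}(T)$. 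On note $\U := \PP \setminus T$, $\U' := \PP' \setminus T' = f ^{-1}(\U)$ et $g := f |\U'\,:\,\U' \to \U$, qui est propre ; les immersions $Y \hookrightarrow \U$ et $Y' \hookrightarrow \U'$ sont ferm\'ees.

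La deuxi\`eme \'etape consiste \`a interpr\'eter les deux membres via les d\'efinitions de \ref{operation-cohomoYXsurhol}. Par construction, $a _{0+}(\E') = f _+(\E')$, et le foncteur de restriction $|Y$ s'identifie \`a $|\U$ (voir \ref{fonct-restric}), de sorte que le membre de gauche est $f _+(\E')|\U$. D'autre part, en restreignant le diagramme pr\'ec\'edent au-dessus de l'ouvert $\U \subset \PP$, on obtient un diagramme de la forme \ref{diagdefi-morph-couple} pour le morphisme $b _0\,:\,(Y',Y') \to (Y,Y)$ : le carr\'e form\'e par $\U'$, $\widetilde{\PP}'$, $\U$, $\widetilde{\PP}$ reste cart\'esien, $\widetilde{f}$ y est propre et lisse, et le diviseur induit sur $\U$ est vide puisque $\U = \PP \setminus T$. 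Comme $b _0$ est propre, la d\'efinition de \ref{operation-cohomoYXsurhol} fournit alors directement $b _{0+}(\E'|Y') = g _+(\E'|\U')$, o\`u l'on a utilis\'e $\E'|Y' = \E'|\U'$.

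Il reste alors \`a produire l'isomorphisme canonique $f _+(\E')|\U \riso g _+(\E'|\U')$. Puisque $\U' = f ^{-1}(\U)$, le carr\'e form\'e par $f$, $g$ et les immersions ouvertes $j\,:\,\U \hookrightarrow \PP$, $j'\,:\,\U' \hookrightarrow \PP'$ est cart\'esien. L'image directe \'etant donn\'ee par $f _+ = \R f _*(\D ^\dag _{\PP' \to \PP,\Q} \otimes ^\L _{\D ^\dag _{\PP',\Q}} -)$, on conclurait en combinant le changement de base topologique $j ^* \R f _* \riso \R g _* j ^{\prime *}$ (la restriction \`a un ouvert commute aux images directes sup\'erieures) avec la compatibilit\'e du bimodule de transfert \`a la restriction aux ouverts, $j ^{\prime *}(\D ^\dag _{\PP' \to \PP,\Q} \otimes ^\L \E') \riso \D ^\dag _{\U' \to \U,\Q} \otimes ^\L (\E'|\U')$. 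C'est pr\'ecis\'ement l'isomorphisme de commutation d\'ej\`a invoqu\'e dans la preuve de \ref{a+propre}.\ref{b+proprelisse-varlisse}.

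La difficult\'e principale n'est pas analytique : elle tient \`a la v\'erification soigneuse que le diagramme restreint au-dessus de $\U$ est un diagramme admissible pour $b _0$ au sens de \ref{lemm-morph-couple} (en particulier que $g$ est propre et que le diviseur induit sur $\U$ est vide), de sorte que $b _{0+}$ se calcule effectivement par $g _+$ ; le changement de base lui-m\^eme est formel. On v\'erifierait enfin que l'isomorphisme obtenu ne d\'epend pas du diagramme choisi, ce qui d\'ecoule de la transitivit\'e des images directes et des \'equivalences canoniques de \ref{gen-coh-PXTindtPsurhol} servant \`a d\'efinir les cat\'egories $F\text{-}D ^\mathrm{b} _\mathrm{surhol} (\D _{(Y,X)/K})$.
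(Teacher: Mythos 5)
Votre preuve est correcte et suit essentiellement la m\^eme d\'emarche que celle du papier : on choisit via \ref{lemm-morph-couple} un diagramme de la forme \ref{diagdefi-morph-couple} avec $\widetilde{T}'=\widetilde{f}^{-1}(\widetilde{T})$ (possible car $Y'=a_0^{-1}(Y)$), puis on conclut par d\'efinition des foncteurs restriction et image directe, c'est-\`a-dire par la commutation de $f_+$ \`a la restriction au-dessus de l'ouvert $\U=\PP\setminus T$ avec $\U'=f^{-1}(\U)$. Le papier se borne \`a invoquer ces d\'efinitions ; vous explicitez en plus le changement de base par l'immersion ouverte et la v\'erification que le diagramme restreint est admissible pour $b_0$, ce qui est exactement le contenu implicite de la preuve originale.
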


\begin{proof}
D'après \ref{lemm-morph-couple}, on dispose alors d'un diagramme de la forme \ref{diagdefi-morph-couple}
satisfaisant aux conditions requises de \ref{lemm-morph-couple} et dont on reprendra les notations.
Comme $Y' = a  ^{-1} (Y)$, on peut en outre supposer que $\widetilde{T} '=\widetilde{f} ^{-1} (\widetilde{T})$. 
D'où le résultat par définition des foncteurs restriction et image directe.
\end{proof}

Le théorème \ref{b+proprelisse} ci-dessous est une version formelle de la conjecture de Berthelot (dans le cas complet).
Il se déduit aussitôt du théorème \ref{b+proprelisse-varlisse-gen}.

\begin{theo}
\label{b+proprelisse}
Soit $a \colon (Y',X') \to (Y,X)$ un morphisme propre de couples de $k$-variétés proprement $d$-réalisables (voir \ref{defi-(d)plongprop}).
On suppose $Y$ est lisse, $Y' = a  ^{-1} (Y)$ et le morphisme 
induit $Y'\rightarrow Y$ lisse.

Le foncteur image directe par $a $ induit alors la factorisation :
\begin{equation}
  \label{a+isocsurcv-pl-fonct}
a _{+} \colon  (F\text{-})D ^\mathrm{b} _\mathrm{isoc} (\D ^\dag _{(Y',X')/K})
\rightarrow
(F\text{-})D ^\mathrm{b} _\mathrm{isoc} (\D ^\dag _{(Y,X)/K}).
\end{equation}
\end{theo}

\subsection{Indépendance par rapport à la compactification}

\begin{prop}
\label{prop-nota-Dsurcv-proper}  

Soient $Y$ une variété proprement $d$-réalisable (\ref{defi-var-dplong}).

Choisissons $\PP$ un $\V$-schéma formel propre et lisse,
$T$ un diviseur de $P$ tels qu'il existe une immersion fermée $X \hookrightarrow \PP$ 
induisant l'égalité $Y = X \setminus T$. 
\begin{itemize}

\item Posons
$\mathfrak{C}=(F\text{-})\mathrm{Surcoh}$
ou 
$\mathfrak{C}=(F\text{-})\mathrm{Surhol}$
ou 
$\mathfrak{C}=
(F\text{-})D ^\mathrm{b} _\mathrm{surcoh} $
ou 
$\mathfrak{C}=
(F\text{-})D ^\mathrm{b} _\mathrm{surhol} $.
La catégorie $\mathfrak{C} (\PP, T, X/K)$
ne dépend pas, à isomorphisme canonique près, des choix effectués (mais seulement de $Y/K$).
On la notera alors sans ambiguïté 
$\mathfrak{C} (\D ^\dag _{Y/K}) $.

\item  Lorsque $Y$ est lisse,  la catégorie $(F\text{-})D ^\mathrm{b} _\mathrm{isoc}  (\PP, T, X/K)$  ne dépend de même canoniquement que de $Y$.
On la notera alors sans ambiguïté $(F\text{-})D ^\mathrm{b} _\mathrm{isoc} (\D ^\dag _{Y/K})$.
\end{itemize}
\end{prop}

\begin{proof}
Faisons un tel second choix : soient $\PP' $ un $\V$-schéma formel propre et lisse,
$X' \hookrightarrow \PP'$ une immersion fermée, 
$T'$ un diviseur de $P'$ 
induisant l'égalité $Y = X '\setminus T'$. 
Soient $\PP'':=\PP\times \PP'$,
$q\colon \PP''\to \PP$,
$q'\colon \PP''\to \PP'$ les projections canoniques, 
$T'' _{1}:= q ^{-1} (T)$, 
$T'' _{2}:= q ^{\prime -1} (T')$, 
$T'' := T'' _{1} \cup T'' _{2}$,
$X''$ l'adhérence de $Y$ dans $P''$.
On vérifie $q (X'') \subset X$.
Comme $q$ est propre, l'immersion ouverte
$Y \subset X '' \cap q ^{-1} (Y)=X ''\setminus T'' _{1}$ est aussi fermée.
Comme $Y$ est dense dans $X''$, 
on en déduit
$Y = X ''\setminus T'' _{1}$.  
De même, on obtient $Y = X ''\setminus T'' _{2}$ et donc 
$Y = X ''\setminus T'' $. 
Notons $a\colon X''\to X$ le morphisme induit par $q$. 
Comme $a$ et $q$ sont propres,
il résulte alors de \ref{gen-coh-PXTindtPsurhol} 
utilisé pour $(q, a,Id) \colon  (\PP'', T'',X'',Y)\to (\PP, T,X,Y)$
que
les foncteurs $\R \underline{\Gamma} ^\dag _{X''}  q ^! $ et
$q _+$ induisent des équivalences quasi-inverses 
entre les catégories 
$(F\text{-})\mathfrak{C}  (\PP, T, X/K)$ 
et 
$(F\text{-})\mathfrak{C}  (\PP'', T'', X''/K)$.
Symétriquement, on vérifie que les catégories 
$(F\text{-})\mathfrak{C}  (\PP', T', X'/K)$ 
et 
$(F\text{-})\mathfrak{C}  (\PP'', T'', X''/K)$
sont canoniquement équivalentes. 
D'où le résultat. 

\end{proof}

\begin{rema}
$\bullet$ Soit $Y$ une $k$-variété proprement $d$-réalisable. 
On bénéficie alors, pour tout entier $l$, des foncteurs canoniques
  $\mathcal{H} ^l \colon  (F\text{-})D ^\mathrm{b} _\mathrm{surcoh} (\D ^\dag _{Y/K})
  \rightarrow (F\text{-})\mathrm{Surcoh} (Y/K)$
  et 
   $\mathcal{H} ^l \colon  (F\text{-})D ^\mathrm{b} _\mathrm{surhol} (\D ^\dag _{Y/K})
  \rightarrow (F\text{-})\mathrm{Surhol} (Y/K)$.

  En effet, soient $\PP$ un $\V$-schéma formel propre et lisse, $T$ un diviseur de $P$ et
  $X$ un sous-schéma fermé de $P$ tels que $Y = X \setminus T$. 
  Le cas surholonome se traitant de manière analogue (grâce aussi à \ref{surhol-espcoh}), 
  traitons le cas surcohérent. 
  On dispose du foncteur 
  $\mathcal{H} ^l \colon  (F\text{-})D ^\mathrm{b} _\mathrm{surcoh} (\PP, T, X/K)
  \rightarrow (F\text{-})\mathrm{Surcoh} (\PP, T, X/K)$.
Grâce au théorème d'annulation de \ref{gen-coh-PXTindtPsurhol}.\ref{gen-coh-PXTindtPsurhol-i}, 
on vérifie que ce foncteur $\mathcal{H} ^l$ commute aux équivalences canoniques de catégories
d'indépendance par rapport au choix de tel $d$-cadre $(\PP, T,X, Y)$ (ces équivalences sont construites dans la preuve de \ref{prop-nota-Dsurcv-proper}).

$\bullet$
Lorsque $T$ est un sous-schéma fermé, le foncteur $(\hdag T)$ reste bien défini 
(voir \cite{caro_surcoherent}). On peut donc étendre les définitions des catégories 
de $(F\text{-})D ^\mathrm{b} _\mathrm{surhol} (\PP, T, X/K)$ et
$(F\text{-})\mathrm{Surhol} (\PP, T, X/K)$ 
à ce cas (par contre, il est moins clair comment donner un sens à la $\smash{\D} ^{\dag } _{\PP } (\hdag T)_{\Q}$-surcohérence). 
Mais on ne dispose plus de la factorisation de la forme
$\mathcal{H} ^l \colon  (F\text{-})D ^\mathrm{b} _\mathrm{surhol} (\PP, T, X/K)
  \rightarrow (F\text{-})\mathrm{Surhol} (\PP, T, X/K)$.
  Par exemple, si $\PP$ est un $\V$-schéma formel propre et lisse,
  si $T$ est un sous-schéma lisse de codimension $2$ dans $P$ et si $Y = P \setminus T$,
  alors $\O _{\PP} (\hdag T) _\Q \in F\text{-}D ^\mathrm{b} _\mathrm{surhol} (\PP, T, P/K)$
  mais $\mathcal{H}  ^0 (\O _{\PP} (\hdag T) _\Q ) = \O _{\PP,\Q} \not \in
  \mathrm{Surhol} (\PP, T, P/K)$.
\end{rema}

\begin{prop}
\label{operation-cohomoYsurcoh}
Soit $b \colon Y'\to Y$ un morphisme de $k$-variétés proprement $d$-réalisables.
\begin{enumerate}
\item On dispose alors du foncteur image inverse extraordinaire par $a $ : 
\begin{equation}
\notag 
b  ^{!} \colon  (F\text{-})D ^\mathrm{b} _\mathrm{surcoh} (\D ^\dag _{Y/K})
\to 
(F\text{-})D ^\mathrm{b} _\mathrm{surcoh} (\D ^\dag _{Y'/K}).
\end{equation}
\item On dispose du foncteur image directe par $ b $ :
\begin{equation}
\notag 
b _{+}\colon  (F\text{-})D ^\mathrm{b} _\mathrm{surcoh} (\D ^\dag _{Y'/K})
\to 
(F\text{-})D ^\mathrm{b} _\mathrm{surcoh} (\D ^\dag _{Y/K}).
\end{equation}

\end{enumerate}

On bénéficie des factorisations analogues pour les complexes surholonomes. 
\end{prop}

\begin{proof}
On construit les foncteurs de manière identique à ceux de 
\ref{operation-cohomoYXsurhol} et on vérifie qu'ils sont indépendants des choix faits de manière analogues.

\end{proof}

\begin{nota}
\label{fonct-restri-Y}
Soit $b \colon Y' \to Y$ un morphisme de $k$-variétés proprement $d$-réalisables
qui soit une immersion ouverte. 
Pour tout $\E \in  (F\text{-})D ^\mathrm{b} _\mathrm{surcoh} (\D ^\dag _{Y/K})$,
on notera alors $\E | Y':=b  ^{!}(\E)$.
\end{nota}

\subsection{Complexes dont les espaces de cohomologie sont des
isocristaux partiellement surcohérents: cas général}

La définition suivante étend au cas général (i.e. $Y$ n'est plus forcément lisse)
celle de \ref{prop-nota-Dsurcv}.
\begin{defi}
\label{defi-isocdagdag}
 Soit $(Y,X)$ un couple de $k$-variétés proprement $d$-réalisable (\ref{defi-(d)plongprop}).
On désigne par $(F\text{-})D ^\mathrm{b} _\mathrm{isoc} (\D ^\dag _{(Y,X)/K})$ la sous-catégorie pleine de
$(F\text{-})D ^\mathrm{b} _\mathrm{surcoh} (\D ^\dag _{(Y,X)/K})$ 
des objets $\E$ satisfaisant la propriété suivante : pour tout morphisme 
$a \colon (Y',X') \to (Y,X)$ de couples de $k$-variétés proprement $d$-réalisables 
tel que $Y'$ soit lisse, on a alors $a  ^! (\E) \in (F\text{-})D ^\mathrm{b} _\mathrm{isoc} (\D ^\dag _{(Y',X')/K})$, où la catégorie $(F\text{-})D ^\mathrm{b} _\mathrm{isoc} (\D ^\dag _{(Y',X')/K})$
est celle définie en \ref{prop-nota-Dsurcv}.

\end{defi}

\begin{theo}
\label{prop-chgtbase}
Soient $a \colon (Y',X') \to (Y,X)$ un morphisme propre de couples de $k$-variétés proprement $d$-réalisables
et $q \colon (\widetilde{Y},\widetilde{X}) \to (Y,X)$ un morphisme de couples de $k$-variétés proprement $d$-réalisables.
  Soient $\widetilde{Y}  ' := \widetilde{Y} \times _{Y} Y'$, $\widetilde{X}  ' := \widetilde{X} \times _{X} X'$,
  $q '\colon (\widetilde{Y}  ' ,\widetilde{X}  ') \rightarrow (Y',X')$ et
  $\widetilde{a} \colon  (\widetilde{Y}  ' ,\widetilde{X}  ')  \rightarrow (\widetilde{Y},\widetilde{X})  $ les morphismes de couples de $k$-variétés proprement $d$-réalisables induits.   
  On dispose alors de l'isomorphisme de changement de base fonctoriel en $\E'\in (F\text{-})D ^\mathrm{b} _\mathrm{surcoh} (\D ^\dag _{(Y',X')/K}) $ :
  \begin{equation}
    \notag
  q  ^! \circ a _{+} (\E') \riso \widetilde{a} _{+} \circ q  ^{\prime!} (\E').
  \end{equation}
\end{theo}

\begin{proof}
Comme $a $ est propre, 
d'après \ref{lemm-morph-couple},
il existe un morphisme de $d$-cadres de la forme 
$(f, a,b) \colon  (\PP', T',X',Y')\to (\PP, T,X,Y)$
tel que $f$ soit un morphisme propre et lisse, 
$T' \supset f ^{-1}(T)$
et s'inscrivant dans un diagramme de la forme \ref{diagdefi-morph-couple}.
De même, il existe un morphisme de $d$-cadres de la forme 
$(g, q,r) \colon  (\widetilde{\PP} , \widetilde{T} ,\widetilde{X} ,\widetilde{Y} )\to (\PP, T,X,Y)$
tel que $g$ soit un morphisme lisse,
$\widetilde{T} \supset g ^{-1}(T)$
et s'inscrivant dans un diagramme de la forme \ref{diagdefi-morph-couple}.
  On note alors $\widetilde{\PP}' := \widetilde{\PP} \times _{\PP} \PP'$ et
  $g '\colon \widetilde{\PP}' \rightarrow \PP'$,
  $\widetilde{f}\colon \widetilde{\PP}' \rightarrow \widetilde{\PP}$,
  $\widetilde{T}':= \widetilde{f} ^{-1} (\widetilde{T}) \cup g ^{\prime -1} (T')$.
On remarque alors que
$\widetilde{Y}  ' = \widetilde{X}' \setminus \widetilde{T}'$.
On dispose ainsi des morphismes de $d$-cadres de la forme
$(g', q',r') \colon  (\widetilde{\PP} ', \widetilde{T} ',\widetilde{X} ',\widetilde{Y} ')\to (\PP', T',X',Y')$
et
$(\widetilde{f} , \widetilde{a} ,\widetilde{b} ) \colon (\widetilde{\PP} ', \widetilde{T} ',\widetilde{X} ',\widetilde{Y} ')\to (\widetilde{\PP} , \widetilde{T} ,\widetilde{X} ,\widetilde{Y} )$,
le dernier étant propre.

Par définition $q  ^{\prime!}(\E') =
\R \underline{\Gamma} ^\dag _{\widetilde{X}'} \circ (\hdag \widetilde{T}') \circ g ^{\prime !} (\E') $ et
$$\label{prop-chgtbase=1}
\widetilde{a} _{+} \circ q  ^{\prime!} (\E') =
\widetilde{f} _+ \circ \R \underline{\Gamma} ^\dag _{\widetilde{X}'} \circ (\hdag \widetilde{T}') \circ g ^{\prime !} (\E ').$$
D'un autre côté, on a par définition :
$$\label{prop-chgtbase=2} 
q  ^! \circ a _{+} (\E ')  =
\R \underline{\Gamma} ^\dag _{\widetilde{X}} \circ (\hdag \widetilde{T}) \circ g ^! \circ f _+ (\E ').$$
D'après 
l'isomorphisme de changement de base
(voir \cite[5.7]{Abe-Frob-Poincare-dual} ou 
\cite[5.4.6]{caro-stab-sys-ind-surcoh}),
$g ^! \circ f _+ (\E ') \riso \widetilde{f} _+ \circ g ^{\prime !}(\E ')$. D'où : 
$q  ^! \circ a _{+} (\E ')  \riso 
\R \underline{\Gamma} ^\dag _{\widetilde{X}} \circ (\hdag \widetilde{T}) \circ \widetilde{f} _+ \circ g ^{\prime !}(\E ')$.
Par commutation des foncteurs cohomologiques locaux à support strict et des foncteurs de localisation aux foncteurs images directes (voir \cite[2.2.18]{caro_surcoherent}),
il en résulte :
$$\label{prop-chgtbaseiso1}
q  ^! \circ a _{+} (\E ')  \riso
\widetilde{f} _+ \circ
\R \underline{\Gamma} ^\dag _{\widetilde{f} ^{-1} (\widetilde{X})} \circ (\hdag \widetilde{f} ^{-1} (\widetilde{T}))
\circ g ^{\prime !} (\E ').$$
Or, $\E ' \riso \R \underline{\Gamma} ^\dag _{X'} \circ (\hdag T') (\E ')$.
Par commutation des foncteurs cohomologiques locaux à support strict et des foncteurs de localisation aux foncteurs images inverses extraordinaires
(voir \cite[2.2.18]{caro_surcoherent}),
cela entraîne :
$g ^{\prime !} (\E ')
\riso
\R \underline{\Gamma} ^\dag _{g ^{\prime -1} (X')}\circ
(\hdag g ^{\prime -1} (T')) \circ g ^{\prime !} (\E ').$
D'où :
\begin{align}
  \label{prop-chgtbaseiso2}
  \notag
q  ^! \circ a _{+} (\E ')  & \riso
\widetilde{f} _+ \circ
\R \underline{\Gamma} ^\dag _{\widetilde{f} ^{-1} (\widetilde{X})} \circ (\hdag \widetilde{f} ^{-1} (\widetilde{T}))
\circ
\R \underline{\Gamma} ^\dag _{g ^{\prime -1} (X')}\circ
(\hdag g ^{\prime -1} (T')) \circ g ^{\prime !} (\E ')
\\
\notag
& \riso
\widetilde{f} _+ \circ \R \underline{\Gamma} ^\dag _{\widetilde{X}'} \circ (\hdag \widetilde{T}') \circ g ^{\prime !} (\E ')
=\widetilde{a} _{+} \circ q  ^{\prime!},
\end{align}
le deuxième isomorphisme découlant des égalités $\widetilde{f} ^{-1} (\widetilde{X}) \cap g ^{\prime -1} (X') = \widetilde{X}'$
et $\widetilde{T}'= \widetilde{f} ^{-1} (\widetilde{T}) \cup g ^{\prime -1} (T')$.
\end{proof}

\begin{theo}
\label{b+proprelisse-Yqcq}
Soit $a \colon (Y',X') \to (Y,X)$ un morphisme propre de couples de $k$-variétés proprement $d$-réalisables tel que $ a  ^{-1} (Y) =Y'$ et tel que le morphisme induit $Y' \to Y$ soit propre et lisse.
Alors, le foncteur image directe par $a $ se factorise sous la forme :
\begin{equation}
  \label{b+isocsurcv-pl-fonct-Yqcq}
a _{+} \colon  (F\text{-})D ^\mathrm{b} _\mathrm{isoc} (\D ^\dag _{(Y',X')/K})
\rightarrow
(F\text{-})D ^\mathrm{b} _\mathrm{isoc} (\D ^\dag _{(Y,X)/K}).
\end{equation}
\end{theo}

\begin{proof}
Par définition de $(F\text{-})D ^\mathrm{b} _\mathrm{isoc} (\D ^\dag _{(Y,X)/K})$,
on se ramène via le théorème de changement de base de \ref{prop-chgtbase} au cas où $Y$ est lisse.
Enfin, par \ref{b+proprelisse}, ce cas a déjà été traité.
\end{proof}

\begin{theo}
[Surconvergence générique de la surholonomie]
\label{surcv-dense}
Soient $(Y,X)$ un couple de $k$-variétés proprement $d$-réalisables (\ref{defi-(d)plongprop}),
$Y _1$ une composante irréductible de $Y$.
Pour tout $\E \in (F\text{-})D ^\mathrm{b} _\mathrm{surcoh} (\D ^\dag _{(Y,X)/K})$,
il existe alors un morphisme de couples de $k$-variétés proprement $d$-réalisables
de la forme $(\widetilde{Y},X) \to (Y,X)$ tel que $\widetilde{Y} \subset Y _1$ soit non vide et tel que
  $$\E | (\widetilde{Y},X)  \in (F\text{-})D ^\mathrm{b} _\mathrm{isoc} (\D ^\dag _{(\widetilde{Y},X)/K}).$$
\end{theo}

\begin{proof}
Cela résulte aussitôt de \cite[6.2.1]{caro-pleine-fidelite}.
\end{proof}

On déduit aussitôt de \ref{surcv-dense} le corollaire suivant :
\begin{coro}
Soient $a \colon (Y',X') \to (Y,X)$ un morphisme propre de couples de $k$-variétés proprement $d$-réalisables,
$Y _1$ une composante irréductible de $Y$.
  Pour tout $\E \in (F\text{-})D ^\mathrm{b} _\mathrm{isoc} (\D ^\dag _{(Y',X')/K})$,
il existe alors un morphisme de couples de $k$-variétés proprement $d$-réalisables
de la forme $(\widetilde{Y},X) \to (Y,X)$ tel que $\widetilde{Y} \subset Y _1$ soit non vide et tel que 
  $$a _{+} ( \E )|(\widetilde{Y},X) \in (F\text{-})D ^\mathrm{b} _\mathrm{isoc} (\D ^\dag _{(\widetilde{Y},X)/K}).$$
\end{coro}

\bibliographystyle{smfalpha}
\providecommand{\bysame}{\leavevmode ---\ }
\providecommand{\og}{``}
\providecommand{\fg}{''}
\providecommand{\smfandname}{et}
\providecommand{\smfedsname}{\'eds.}
\providecommand{\smfedname}{\'ed.}
\providecommand{\smfmastersthesisname}{M\'emoire}
\providecommand{\smfphdthesisname}{Th\`ese}

\bigskip
\noindent Daniel Caro\\
Laboratoire de Mathématiques Nicolas Oresme\\
Université de Caen
Campus 2\\
14032 Caen Cedex\\
France.\\
email: daniel.caro@unicaen.fr

\end{document}